\theoremstyle{plain} %
\newtheorem{theorem}             {Theorem}  [section]
\newtheorem{lemma}      [theorem]{Lemma}
\newtheorem{corollary}  [theorem]{Corollary}
\newtheorem{proposition}[theorem]{Proposition}
\newtheorem{observation}   [theorem]{Observation}
\theoremstyle{definition}
\newtheorem{definition} [theorem]{Definition}
\newtheorem{condition}  [theorem]{Condition}
\newtheorem{example}    [theorem]{Example}
\theoremstyle{remark}
\newtheorem{remark}  [theorem]            {Remark}
\newcommand{\Q}{{\mathbb Q}}
\newcommand{\Z}{{\mathbb Z}}
\newcommand{\N}{{\mathbb N}}
\newcommand{\R}{{\mathbb R}}
\newcommand{\C}{{\mathbb C}}
\newcommand{\p}{\mathfrak p}
\newcommand{\OF}{{\mathfrak o}}
\newcommand{\GL}{{\rm GL}}
\newcommand{\PGL}{{\rm PGL}}
\newcommand{\SL}{{\rm SL}}
\newcommand{\sgn}{{\rm sgn}}
\newcommand{\St}{{\rm St}}
\newcommand{\ad}{{\rm ad}}
\newcommand{\Gal}{{\rm Gal}}
\newcommand{\Hom}{{\rm Hom}}
\newcommand{\mat}[4]{{\setlength{\arraycolsep}{0.5mm}\left[
      \begin{array}{cc}#1&#2\\#3&#4\end{array}\right]}}
\def\Jfcomplete{{J_f^*}}
\def\Jcomplete{{J^*}}
\def\Icomplete{{I\subscriptp^*}}
\def\Iinfcomplete{{I_\infty^*}}
\def\Ipcomplete{{I_p^*}}
\def\Ivcomplete{{I_v^*}}
\def\onehalf{{1/2}}
\def\onehalfbig{{1/2}}
\def\twist{{}}
\def\subscriptp{{}}
\def\fin{\operatorname{fin}}
\def\Stab{\operatorname{Stab}}
\def\SL{\operatorname{SL}}
\def\vol{\operatorname{vol}}
\def\GL{\operatorname{GL}}
\def\res{\operatorname{res}}
\renewcommand{\Re}{\mathrm{Re}}
\renewcommand{\Im}{\mathrm{Im}}
\def\eps{\varepsilon}
\begin{document}

\bibliographystyle{plain}

\title[Rankin--Selberg integrals and quantum unique ergodicity]{Bounds for Rankin--Selberg integrals and quantum unique ergodicity for powerful levels}

\thanks{The first author is supported
  by NSF grant OISE-1064866
  and partially supported by grant SNF-137488. The second author is supported by NSF grant DMS 1100541}

\subjclass[2010]{Primary 11F11; Secondary 11F70, 22E50, 58J51}

\author{Paul D. Nelson}
\address{EPFL, Station 8, CH-1015 Lausanne, Switzerland}
\email{paul.nelson@epfl.ch}

\author{Ameya Pitale}
\address{Department of Mathematics
  \\ University of Oklahoma\\ Norman\\
  OK 73019, USA,}
\email{apitale@math.ou.edu}

\author{Abhishek Saha}
\address{Department of Mathematics\\
  University of Bristol\\
  Bristol BS81TW\\
  UK} \email{abhishek.saha@bris.ac.uk}

\begin{abstract}
  Let $f$ be a classical holomorphic newform
  of level $q$ and even weight $k$.
    We show that the pushforward to the full level modular curve
  of the mass of $f$ equidistributes as $q k \rightarrow
  \infty$. This generalizes known results
  in the case that $q$ is squarefree.
  We obtain a power savings in the rate of equidistribution
  as $q$ becomes sufficiently ``powerful'' (far away from being
  squarefree), and in particular
  in the ``depth aspect'' as $q$ traverses the powers  of a fixed prime.

  We compare the difficulty of such equidistribution problems
  to that of corresponding subconvexity problems
  by deriving explicit extensions of Watson's
  formula
  to certain triple product integrals involving forms of
  non-squarefree
  level.
  By a theorem of Ichino and a lemma of Michel--Venkatesh,
  this amounts to a detailed study of Rankin--Selberg integrals
  $\int |f|^2 E$ attached to newforms $f$ of arbitrary
  level and Eisenstein series $E$ of full level.

  We find that the local factors of such integrals participate in many amusing
  analogies with global $L$-functions.
  For instance,
  we observe that the mass equidistribution conjecture
  with a power savings in the depth aspect
  is equivalent to knowing either a global subconvexity bound
  or what we call a ``local subconvexity bound'';
  a consequence of our local calculations is what
  we call a ``local Lindel\"{o}f hypothesis".

\end{abstract}
\maketitle

\def\eachnotany{~{each}~}

\section{Introduction}
\label{sec:introduction}
\subsection{Main result}
\label{sec:main-result}
Let $f : \mathbb{H} \rightarrow \mathbb{C}$ be a classical holomorphic newform
of weight $k \in 2 \mathbb{N}$ on $\Gamma_0(q)$, $q \in
\mathbb{N}$
(see Section \ref{sec:modular-forms-their}
for definitions).
The pushforward to $Y_0(1) = \SL_2(\mathbb{Z}) \backslash \mathbb{H}$
of the $L^2$-mass of $f$ is the finite measure given by
\[
\mu_f(\phi) = \int _{\Gamma_0(q) \backslash \mathbb{H} }
y ^k \lvert f \rvert ^2 (z) \phi (z) \, \frac{d x \, d y }{ y ^2 }
\]
for each bounded measurable function $\phi$ on $Y_0(1)$.
Its value $\mu_f(1)$ at the constant function $1$ is
(one possible normalization of) the
Petersson norm of $f$.
Let $d \mu(z) = y^{-2} d x \, d y$ denote the standard
hyperbolic volume measure on $Y_0(1)$,
and let
\[
D_f(\phi) :=
\frac{\mu_f(\phi)}{\mu_f(1)}
-
\frac{\mu(\phi)}{\mu(1)}.
\]
The quantity $D_f(\phi)$ compares
the probability measures attached to $\mu_f$ and $\mu$
against a test function $\phi$.

The problem of bounding $D_f(\phi)$
for fixed $\phi$
as the parameters of $f$ vary
is a natural analogue of the Rudnick--Sarnak
quantum unique ergodicity conjecture \cite{MR1266075}. It was raised explicitly in the $q = 1$, $k \rightarrow \infty$ aspect
by Luo--Sarnak \cite{luo-sarnak-mass}
and in the $k=\text{constant}$, $q \rightarrow \infty$ aspect by Kowalski--Michel--VanderKam
\cite{KMV02};
in each case it was conjectured that $D_f(\phi) \rightarrow 0$.
Such a conjecture is reasonable because a theorem of Watson
\cite{watson-2008} and subsequent generalizations
(see Sections \ref{sec:equid-vs-subconvexity} and
\ref{sec:watson})
have shown that it follows in many
cases
from the (unproven) Generalized Lindel\"{o}f Hypothesis,
itself a consequence of the Generalized Riemann Hypothesis.
The first unconditional
result for general (non-dihedral)
$f$ was obtained
by
Holowinsky and Soundararajan \cite{holowinsky-soundararajan-2008},
who showed that
$D_f(\phi) \rightarrow 0$ for fixed $q$ ($=1$) and varying $k
\rightarrow \infty$; we refer to their paper
and \cite{sarnak-progress-que}
for further historical background.
The case of varying squarefree levels
was addressed in \cite{PDN-HQUE-LEVEL},
where it was shown that
$D_f(\phi) \rightarrow 0$ as $qk \rightarrow \infty$
provided that $q$ is squarefree.

Our aim in this paper is to address the remaining case
in which the varying level $q$ need not be squarefree.
We obtain the expected result,
thereby settling
the remaining cases
of the conjecture in \cite{KMV02}:

\begin{theorem}
  \label{thm:main}
  Fix a bounded continuous function $\phi$  on $Y_0(1)$.
  Let $f$ traverse a sequence of holomorphic newforms of weight $k$ on $\Gamma_0(q)$ with $k \in 2 \mathbb{N}$, $q \in \mathbb{N}$. Then
  $D_f(\phi) \rightarrow 0$
  whenever $q k \rightarrow \infty$.
\end{theorem}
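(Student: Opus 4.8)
The plan is to run the equidistribution strategy of Luo--Sarnak and Holowinsky--Soundararajan, with Watson's triple product formula replaced by an explicit version valid for newforms of \emph{arbitrary} level; the latter will come from combining Ichino's triple product formula with the lemma of Michel--Venkatesh, which together recast the question in terms of Rankin--Selberg integrals $\int|f|^{2}E$ together with an archimedean computation, plus an exact evaluation of the relevant local Rankin--Selberg integrals at the primes dividing $q$. As a preliminary reduction, by the Weyl-type criterion of Zelditch and Luo--Sarnak it suffices to prove $D_f(\phi)\to 0$ when $\phi$ is (a)~constant, (b)~a fixed Hecke--Maass cusp form on $Y_0(1)$, or (c)~an incomplete Eisenstein series $E(\,\cdot\mid h)$ attached to some $h\in C_c^\infty(\R_{>0})$; case (a) is trivial, and case (c) simultaneously supplies the tightness that rules out escape of mass into the cusp, so that $\mu_f/\mu_f(1)$ indeed converges to $\mu/\mu(1)$ as a probability measure.

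For case (c) I would unfold, writing $\mu_f(E(\,\cdot\mid h))$ as a Mellin-type contour integral of
\[
Z(s,f)\;:=\;\int_{\Gamma_0(q)\backslash\mathbb{H}} y^{k}\,|f|^{2}(z)\,E(z,s)\,d\mu(z)
\]
against the transform of $h$, and then shift the contour to $\Re(s)=1/2$. The pole of $E(z,s)$ at $s=1$ contributes $\operatorname{Res}_{s=1}Z(s,f)$, which an explicit description of the local factors of $Z$ identifies with $L(1,\sym^2 f)$ times an explicit product over $p\mid q$; comparing this with $\mu_f(1)=\tfrac{\pi}{3}\operatorname{Res}_{s=1}Z(s,f)$ yields the main term $\mu_f(1)\,\mu(E(\,\cdot\mid h))/\mu(1)$. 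What remains is a rapidly convergent integral of $Z(1/2+it,f)$, which I would bound using the classical zero-free region, a subconvex bound for $L(s,\sym^2 f)$ in the $k$-aspect, and, in the $q$-aspect, the convexity bound -- where the decay on $\Re(s)=1/2$ of the explicitly computed local factors of $Z$ already outweighs the convexity loss -- making the whole integral $o(\mu_f(1))$ as $qk\to\infty$. This step also furnishes the precise shape of the normalizing constant $\mu_f(1)$ needed in case (b).

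Case (b) is the heart of the matter. Passing to the adelic picture, let $F$ generate the automorphic representation $\pi$ of $\PGL_2(\A)$ attached to $f$, chosen to be the local new vector at each $p\mid q$, and let $\Phi$ generate that of $\phi$. Applying Ichino's formula to $(F,\overline F,\Phi)$, using the Michel--Venkatesh lemma to rewrite the local factor at a place where $\Phi$ is spherical as a local Rankin--Selberg integral, and inserting the archimedean evaluation, produces an exact identity of the shape
\[
|D_f(\phi)|^{2}\;\asymp_{\phi}\;\frac{L\!\left(\tfrac12,\sym^2\pi\times\Phi\right)}{L(1,\sym^2\pi)^{2}}\cdot I_\infty^{*}\cdot\prod_{p\mid q}I_p^{*},
\]
where $I_\infty^{*}$ is the (polynomial in $k$) archimedean local integral and $I_p^{*}$ is the normalized local Rankin--Selberg integral of the Whittaker new vector of $\pi_p$ against a spherical section (and a possible factor $L(\tfrac12,\phi)$, whose vanishing would force $D_f(\phi)\equiv 0$, is absorbed into the $\phi$-dependent constant). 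The decisive point is to evaluate $I_p^{*}$ exactly, case by case over the ramified types of $\pi_p$ -- twist-minimal special representations, ramified principal series and supercuspidals, and their twists by ramified characters -- so as, in particular, to obtain the upper bound $\prod_{p\mid q}I_p^{*}\ll q^{-1+\eps}$, a local analogue of the Lindel\"of hypothesis. Granting this, I would bound the right-hand side exactly as in Holowinsky--Soundararajan, adapted to levels as in the squarefree case: the spectral factor $L(\tfrac12,\sym^2\pi\times\Phi)^{1/2}$ is controlled by Soundararajan's weak subconvexity bound in the $k$-aspect and by convexity in the $q$-aspect (the saving $q^{-1+\eps}$ from the local factors dominating the convexity loss), while the complementary regime -- $f$ having atypically large Hecke eigenvalues at many small primes, where weak subconvexity alone is insufficient -- is handled by Holowinsky's sieve bound for $\mu_f(\phi)$ via shifted convolution sums; taking the better of the two bounds and interpolating between the $k$- and $q$-aspects gives $D_f(\phi)\to 0$ whenever $qk\to\infty$. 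The power savings for powerful $q$ then follows by replacing the $q$-aspect convexity input by the genuine power gains that the explicit local computation (a ``local subconvexity'' bound) and depth-aspect considerations provide.

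The step I expect to be the main obstacle is the exact local computation of $I_p^{*}$ at the ramified primes. Since the new vector is \emph{not} the optimal test vector at such a place, $I_p^{*}$ is not merely a ratio of local $L$-factors; one must carry along the Whittaker new vector, the relevant intertwining and normalization constants, and the reduction of non-minimal $\pi_p$ to twist-minimal ones, uniformly over all ramified types and all conductor exponents, and -- crucially -- push the estimate through in the correct ``Lindel\"of'' direction rather than a merely convexity-quality one. The squarefree case is comparatively clean because there $\pi_p$ is just an unramified twist of the Steinberg representation; the powerful case forces one to confront supercuspidal and deeply ramified principal series components, and it is there that essentially all of the new work lies.
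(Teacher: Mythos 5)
Your proposal correctly identifies the global skeleton of the argument: approximate $\phi$ by Maass eigencuspforms and incomplete Eisenstein series, deduce a Watson-type formula for non-squarefree level from Ichino's formula plus the Michel--Venkatesh lemma, compute the local Rankin--Selberg integrals $\Ipcomplete$ at ramified primes exactly, and combine the resulting ``local Lindel\"of'' bound with Soundararajan's weak subconvexity on one flank and a Holowinsky-type shifted-convolution sieve on the other. That is indeed the architecture of the paper's proof, and your emphasis that the exact evaluation of $\Ipcomplete$ across all ramified types is the crux is well placed.

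There is, however, a genuine gap in the sieve half of your argument. You state that the complementary regime ``is handled by Holowinsky's sieve bound for $\mu_f(\phi)$ via shifted convolution sums,'' as if this step were identical to the squarefree case. It is not, and the obstruction is precisely where the paper invests a full section. Unfolding $\mu_f$ against an incomplete Eisenstein series (or bounding $\mu_f(\phi)$ via the sieve) forces you to sum over \emph{all} cusps $\mathfrak{a}$ of $\Gamma_0(q)$ and to control the Fourier coefficients $\lambda_j(n)$ of $f$ at each of them. When $q$ is squarefree the Atkin--Lehner operators act transitively on cusps, so $\lambda_j(n)$ reduces to the familiar Hecke eigenvalues $\lambda_f(n)$ up to harmless unit factors. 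When $q$ is not squarefree there are cusps outside the Atkin--Lehner orbit of $\infty$, and at those cusps the coefficients $\lambda_j(n)$ are \emph{not multiplicative}, and moreover the Deligne bound $|\lambda_j(p^k)| \le \tau(p^k)$ can \emph{fail}: one can have $\lambda_j(p^k) \gg p^{k/4}$. Without an adequate replacement, the sieve estimate simply does not close. The paper handles this by averaging over cusps of a fixed denominator (to restore a weak multiplicativity), proving a ``purity'' statement that $\lambda_{[c],p}(p^k) = 0$ unless $k=0$ or the cusp has middle denominator $c_p^2 = q_p$, and then reusing the local calculations from the triple-product side to show $\lambda_{[c],p}(p^k) \ll p^{k/4}$ uniformly --- a bound that turns out to be exactly what the shifted-convolution estimate can absorb. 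This extra input is what bridges the ``intermediate'' levels (large squarefree part times large prime-power part), where neither the triple-product bound alone nor the squarefree sieve alone suffices. Your sketch needs to explicitly confront this.

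A secondary imprecision: your claimed bound $\prod_{p\mid q} \Ipcomplete \ll q^{-1+\eps}$ holds only under Ramanujan for $\phi$. Unconditionally the exact local computation yields $\prod_{p\mid q}\Ipcomplete \ll q^{-1}\,\tau(q/\sqrt{C})\,(q/\sqrt{C})^{2\theta+\eps}$ with $\theta\le 7/64$ and $C = C(f\times f)$, and since $q/\sqrt{C}$ can be as large as roughly $\sqrt{q}$, the exponent loss is real. For the qualitative statement $D_f(\phi)\to 0$ this is still a power saving in the powerful-level direction, but the precise exponent matters and should be carried along rather than rounded to $q^{-1+\eps}$.
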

Theorem \ref{thm:main} is a consequence of the following more precise
result and a standard approximation argument
(see Section \ref{sec:proof-thm-1} and \cite[Section
1.6]{PDN-HQUE-LEVEL}).
\begin{theorem}
  \label{thm:main-refined}
  Fix a Maass eigencuspform
  or incomplete Eisenstein series $\phi$ on $Y_0(1)$.
  Let $f$ traverse a sequence of holomorphic newforms
  of weight $k$ on $\Gamma_0(q)$ with $k \in 2 \mathbb{N}$, $q
  \in \mathbb{N}$.
  There exist effective positive constants $\delta_1, \delta_2$
  so that\footnote{We use the notation
$A \ll_{x,y,z} B$
to signify that there exists
a positive constant $C$, depending at most upon $x,y,z$,
so that
$|A| \leq C |B|$.}
  \begin{equation}\label{eq:22}
    D_f(\phi)
    \ll_\phi (q/q_0)^{-\delta_1} \log(q k)^{-\delta_2},
  \end{equation}
  where $q_0$ denotes the largest squarefree divisor of $q$.\footnote{If $q$ has the prime factorization $q = \prod_p p^{a_p}$, then $q_0$ has the prime factorization $q_0 = \prod_p p^{\min(a_p, 1)}$.}
\end{theorem}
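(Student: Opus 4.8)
The plan is to reduce the equidistribution statement to estimates for two different kinds of test functions—incomplete Eisenstein series and Maass cusp forms—and treat each by combining an ``easy'' sieve-theoretic bound with a ``hard'' bound coming from triple product $L$-functions, in the spirit of Holowinsky--Soundararajan and its level-aspect refinement in \cite{PDN-HQUE-LEVEL}. For incomplete Eisenstein series $\phi$, one unfolds $D_f(\phi)$ against an integral of $y^k|f|^2$ against real-analytic Eisenstein series $E(z,s)$; by the Rankin--Selberg method this is an explicit ratio of $L$-functions times a local (archimedean and non-archimedean) integral, and one extracts decay in $k$ from the archimedean gamma factors and a power of $q/q_0$ from the $\p$-adic local factors at the ramified primes, using the local calculations of the paper (the ``local Lindel\"of hypothesis''). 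For Maass cusp forms $\phi$, one uses Watson's formula and its non-squarefree generalization established here: $|D_f(\phi)|^2$ is, up to the local factors, controlled by $L(1/2,\phi\times f\times \bar f)/L(1,\mathrm{ad}\,f)^2$, and one invokes the known subconvex (in fact merely convexity-breaking suffices in the presence of the archimedean and depth-aspect local savings) bound for this $\mathrm{GL}_2\times\mathrm{GL}_2\times\mathrm{GL}_1$ family, together with the explicit local factor bounds at ramified primes which furnish the power of $(q/q_0)$.

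Concretely, the key steps in order are: (i) By the approximation argument cited after Theorem~\ref{thm:main}, it suffices to prove \eqref{eq:22} for $\phi$ either an incomplete Eisenstein series or a Maass eigencuspform. (ii) Translate $\mu_f(\phi)$ into a period integral on the adelic quotient, so that Ichino's formula and the Michel--Venkatesh local lemma apply; this expresses the relevant inner product as a product of the central value $L(1/2,\pi_f\times\pi_f\times\pi_\phi)$ (resp.\ $|L(1/2+it,\pi_f\times\pi_f)|^2$-type quantities via Eisenstein series), the adjoint $L$-value $L(1,\pi_f,\mathrm{ad})$ in the denominator, and a finite set of local integrals $I_v$. (iii) Evaluate or bound the local integrals: at $v=\infty$ the weight-$k$ vector forces decay like a negative power of $k$ (cf.\ the archimedean Rankin--Selberg computation); at finite $v\mid q$ the local integral involving the newvector of a ramified $\pi_{f,v}$ against the full-level (spherical) Eisenstein or cusp vector is computed explicitly and is shown to be at most $p_v^{-\delta a_v}$ for some absolute $\delta>0$, where $p_v^{a_v}\|q$; multiplying over $v\mid q$ yields the factor $(q/q_0)^{-\delta_1}$. (iv) For the cuspidal $\phi$, feed in the global subconvexity bound $L(1/2,\pi_f\times\pi_f\times\pi_\phi)\ll_\phi (qk)^{1-\eta}$ from the literature (or even just the convexity bound, because the gain in $k$ already comes from the archimedean local factor and from $L(1,\mathrm{ad}\,f)\gg (qk)^{-\eps}$), obtaining the $\log(qk)^{-\delta_2}$ (indeed a power of $k$) decay. (v) For the Eisenstein $\phi$, the analysis of $\int y^k|f|^2 E(\cdot,s)$ gives the decay in $k$ directly from Stirling applied to the gamma factors, and the decay in $q/q_0$ from the ramified local factors, after controlling the main term via the sieve bound of Holowinsky--Soundararajan type; here one also needs the shifted-convolution/sieve input to rule out conspiracies among the $a_f(n)$. (vi) Combine the two regimes and optimize to get a single bound of the shape \eqref{eq:22}.

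The main obstacle will be step~(iii) at the ramified primes: one must carry out a clean, uniform evaluation of the local Rankin--Selberg-type integrals $\int_{\mathrm{GL}_2(\Q_p)}$ (equivalently, the matrix coefficient integrals in the Michel--Venkatesh lemma) where $\pi_{f,p}$ is an arbitrary ramified representation of $\mathrm{GL}_2(\Q_p)$ of conductor $p^{a_p}$ with $a_p\ge 2$—supercuspidal, special, or a ramified principal series—paired against the normalized spherical vector coming from the full-level Eisenstein series, and show the resulting quantity decays like a fixed power of $p^{a_p}$ rather than merely being bounded. This is precisely the ``local subconvexity/local Lindel\"of'' phenomenon advertised in the abstract, and it is where the non-squarefree case genuinely differs from \cite{PDN-HQUE-LEVEL}: when $a_p=1$ the newvector theory makes this integral essentially trivial, but for $a_p\ge 2$ one must use the explicit structure of newvectors (Casselman, or the $p$-adic matrix-coefficient asymptotics) and possibly distinguish cases by the type of $\pi_{f,p}$. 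A secondary difficulty is ensuring all implied constants depend only on $\phi$ and not on the decomposition of $q$, so that the bound is genuinely uniform as $q$ ranges over all integers, not just powers of a fixed prime; this requires the local estimates of step~(iii) to be uniform in $p$ as well as in $a_p$.
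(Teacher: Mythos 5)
Your high-level architecture is right — reduce to cuspidal and incomplete-Eisenstein test functions, use Ichino/Michel--Venkatesh to convert $|D_f(\phi)|^2$ into $L$-values times local integrals, compute the local integrals at primes dividing $q$ to extract the $(q/q_0)$-power savings, and combine with a Holowinsky-type sieve. But there are two concrete errors that would stall the argument.

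First, in step~(iv) you assert that a subconvexity bound for $L(1/2,\pi_f\times\pi_f\times\pi_\phi)$ in the $qk$-aspect is available, or that convexity plus Stirling plus $L(1,\mathrm{ad}\,f)\gg (qk)^{-\eps}$ already yields decay in $k$. Neither is true. The paper itself notes that subconvexity for these triple product $L$-functions is open. Convexity gives $L(1/2,f\times f\times\phi)\ll (qk)^{1+o(1)}$; combined with the lower bound $L(1,\mathrm{ad}\,f)\gg (qk)^{-\eps}$ and the $\asymp (qk)^{-1}$ arising from the archimedean factor and the measure normalization, one obtains only $|D_f(\phi)|^2\ll (qk)^{o(1)}$, i.e.\ no decay at all. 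The archimedean gamma factor does not help: it cancels against the identical factor implicit in $\mu_f(1)$. What is actually used is Soundararajan's \emph{weak} subconvex bound $L(1/2,f\times f\times\phi)\ll \sqrt{C}k/\log(Ck)^{1-\eps}$, which gives only a logarithmic improvement over convexity; the resulting inequality (Proposition~\ref{prop:bound-a-la-sound}) is strong only when $L(1,\mathrm{ad}\,f)$ is not too small, and one needs the independent sieve-theoretic bound \eqref{eq:36} — which is strong precisely when $L(1,\mathrm{ad}\,f)$ is small — with the final estimate obtained by taking the minimum of the two (Theorem~\ref{thm:main2}) and splitting into cases, as in Holowinsky--Soundararajan. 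Relatedly, your description splits the two ingredients along the axis ``cuspidal vs.\ Eisenstein $\phi$''; in fact both the $L$-function input and the sieve input are needed for each type of $\phi$, and the regimes to be balanced are distinguished by the size of $L(1,\mathrm{ad}\,f)$, not by the spectral type of $\phi$.

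Second, you omit what is genuinely new on the sieve side when $q$ is not squarefree: the Holowinsky shifted-convolution argument requires Fourier expansions of $f$ at \emph{all} cusps of $\Gamma_0(q)$, and for non-squarefree $q$ the Atkin--Lehner operators do not act transitively on the cusps, so most cusps carry Fourier coefficients that are neither multiplicative nor related to $\lambda_f(n)$ in any simple way. One must derive a priori bounds for the root-mean-square $\lambda_{[c]}(n)$ of the Fourier coefficients over cusps of fixed denominator $c$, via the same $p$-adic Whittaker analysis that underlies the local Lindel\"of bound (cf.\ Proposition~\ref{keypropositionfourier} and Corollary~\ref{corollaryfourierbound}); notably, the Deligne bound $\lambda_j(p^k)\ll\tau(p^k)$ can \emph{fail} at such cusps, and the weaker bound $\lambda_{[c],p}(p^k)\ll p^{k/4}$ is what one actually has, which still suffices for the sieve estimate but requires extra care in the analogue of Holowinsky's Theorem~2 (Proposition~\ref{prop:bounds-shifted-sums-fourier-coefs-generic-cusps} and Lemma~\ref{lem:silly-sums-ext}). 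Without addressing these cusp-by-cusp Fourier coefficients, the sieve half of your argument does not go through once $q$ has a squared prime factor.
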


A potentially surprising aspect
of Theorem \ref{thm:main-refined}
is the unconditional power
savings
in the rate of equidistribution
when $q/q_0$ grows faster than a certain fixed
power of $\log(q_0 k)$,
or in words, when the level is sufficiently \emph{powerful}.
A special case that illustrates the new phenomena is the \emph{depth aspect},
in which $k$ is fixed and $q = p^n$ is the power of a fixed
prime $p$
with $n \rightarrow \infty$.

By contrast, suppose that $q$ is squarefree,
so that $q = q_0$.
Then the logarithmic rate of decay $D_f(\phi) \ll_{\phi}
\log(qk)^{-\delta_2}$
in Theorem \ref{thm:main-refined}
is consistent with that obtained
in \cite{holowinsky-soundararajan-2008,PDN-HQUE-LEVEL},
and the problem of improving
this logarithmic decay
to a power savings $D_f(\phi) \ll_\phi (qk)^{-\delta_3}$
($\delta_3 > 0$)
is equivalent
to the (still open) subconvexity problem for certain fixed $\GL(1)$ or $\GL(2)$
twists
of the adjoint lift of $f$ to $\GL(3)$ (see Section
\ref{sec:equid-vs-subconvexity}).

Explaining this ``surprise'' is a major theme of this paper.
It amounts to a detailed study of certain Rankin--Selberg
zeta integrals $J_f(s)$ arising as proportionality constants
in a formula for $D_f(\phi)$ given by Ichino \cite{MR2449948},
as simplified by a lemma of Michel--Venkatesh \cite[Lemma
3.4.2]{michel-2009}. In classical terms, $J_f(s)$ is proportional uniformly for $\Re(s) \ge \delta > 0$ to the meromorphic continuation of the ratio \begin{equation}\label{eq:29}
    \frac{1}{[\Gamma_0(q):\Gamma_0(1)]}
  \frac{
    \int_{\Gamma_0(q) \backslash \mathbb{H}}
    y^k |f|^2(z) \left( \sum_{\gamma \in \Gamma_\infty \backslash
        \Gamma_0(1)} (\Im \gamma z)^s \right)
    \, \frac{d x \, d y }{y^2}
  }{
    \int_{\Gamma_0(q) \backslash \mathbb{H}}
    y^k |f|^2(z) \left( \sum_{\gamma \in \Gamma_\infty \backslash
        \Gamma_0(q)} (\Im \gamma z)^s \right)
    \, \frac{d x \, d y }{y^2}
  },
\end{equation} defined initially for $\Re(s) > 1$. The quantity $J_f(s)$
factors as a product over the primes
dividing the level:
$$J_f(s) = \prod_{p | q}J_p(s),$$
with each $J_p(s)$ a $p$-adic zeta integral (see~\eqref{RS-integral-defn2}) that differs mildly from a polynomial
function of $p^{\pm s}$
and satisfies a functional equation
under $s \mapsto 1-s$.

We find the analytic properties
of such integrals
to be
unexpectedly rich and to participate
in many amusing analogies.
For instance, we show that the problem of obtaining a positive value of $\delta_1$ in
Theorem
\ref{thm:main-refined} is equivalent to knowing \emph{either} a
``global'' subconvex bound for an $L$-value
\emph{or} what we call a \emph{local subconvex bound}
for $J_f(s)$ (see e.g. Observation \ref{thm:que-versus-local-1}).
The main technical result of this paper is a
proof of
(what we call) the \emph{local Lindel\"{o}f hypothesis}
for $J_f(s)$, which, naturally, saves
nearly a factor of $q^{1/4}$  over
the \emph{local
  convexity bound} on the critical line $\Re(s) = \onehalfbig$
(see Section \ref{sec:local-lind-hypoth}).
We observe numerically that $J_f(s)$ seems to satisfy
a \emph{local Riemann hypothesis}
(see Section \ref{sec:local-riem-hypoth}), the significance of which remains unclear to us.

\begin{remark}
  We comment on the nature of the constants
  $\delta_1,\delta_2$ appearing in Theorem \ref{thm:main-refined}.
  One may choose $\delta_2$ very explicitly
  as in \cite{holowinsky-soundararajan-2008,PDN-HQUE-LEVEL},
  while $\delta_1$ depends upon a bound
  $\theta \in [0,7/64]$ (see~\cite{MR1937203})
  towards the Ramanujan conjecture for Maass forms on
  $\SL_2(\mathbb{Z}) \backslash \mathbb{H}$,
  with
  any improvement over the trivial bound
  $\theta \leq \onehalfbig$
  sufficing
  to yield a positive value of $\delta_1$.
  For example, in the simplest case that
  $q = p^{2 m}$ is an even
  power of a prime (the ``even depth aspect''),
  our method leads to the bound
  \[
  D_f(\phi)
  \ll_{k}
  m^{O(1)}
  (p^m)^{-1/2+ \theta}
  \ll_{k,\eps}
  (p^m)^{-1/2+ \theta + \eps}.
  \]
  Our calculations show that the Ramanujan conjecture for Maass
  forms together with the Lindel\"{o}f
  hypothesis
  for fixed $\GL(1)$ and $\GL(2)$ twists of the adjoint lift of $f$
  would imply the stronger bound
  $D_f(\phi) \ll_{\eps,k} (p^m)^{-1+\eps}$,
  which should be optimal\footnote{That is to say, it should be the optimal bound that holds for \emph{all} $f$ of level $p^{2m}$. Stronger bounds will hold, for instance, for ramified character twists of forms of lower level.} as far as the exponent is concerned.
\end{remark}

Our paper is organized as follows.
The remainder of Section \ref{sec:introduction}
is an extended introduction that explains the main
ideas of our work.
In Section \ref{sec:local-calculations}, we undertake
a detailed study of the local Rankin--Selberg integral
attached to a spherical Eisenstein series
and the $L^2$-mass of a newform of arbitrary level. Our calculations yield an
explicit extension of Watson's formula (see Theorem~\ref{thm:watson-ext}) to certain collections of newforms of
not necessarily squarefree level.
In Section \ref{sec:proof-theor-refthm:m},
we study the Fourier coefficients of highly ramified
newforms
at arbitrary cusps of $\Gamma_0(q)$ (see Section \ref{sec:four-expans-at} for an overview)
and apply a variant of the Holowinsky--Soundararajan method
to deduce Theorem \ref{thm:main-refined}.

The results of Section \ref{sec:local-calculations}
suffice on their own to imply Theorem \ref{thm:main-refined}
when the level $q$ is sufficiently powerful (e.g.,
if $q = p^n$ with $p$ fixed and $n \rightarrow \infty$).
At the other extreme, Theorem \ref{thm:main-refined} is already
known when $q$ is squarefree (see \cite{PDN-HQUE-LEVEL}).
It is the myriad of intermediate possibilities
(e.g., when $q = q_0 p^n$ is the product of a large squarefree integer $q_0$ and a large prime power $p^n$) that justifies
Section \ref{sec:proof-theor-refthm:m}.

\subsection{Equidistribution vs. subconvexity}
\label{sec:equid-vs-subconvexity}
The motivating \emph{quantum unique ergodicity} (QUE) conjecture,
put forth by Rudnick and Sarnak,
predicts that the $L^2$-normalized Laplace eigenfunctions
$\phi$
on a negatively curved compact Riemannian manifold
have equidistributed $L^2$-mass in the large eigenvalue limit.
The \emph{arithmetic QUE} conjecture
concerns the special case that $\phi$
traverses a sequence of
joint Hecke-Laplace eigenfunctions on an arithmetic manifold.
A formula of Watson showed in many cases
that the arithmetic QUE conjecture for surfaces,
in a sufficiently strong quantitative form,
is \emph{equivalent} to a case of the central
\emph{subconvexity problem} in the analytic theory
of $L$-functions.
A principal motivation for this work was to investigate the extent to which
this equivalence survives the passage to variants
of arithmetic QUE not covered by Watson's formula.

In the prototypical case
that $f$ is a Maass eigencuspform on $Y_0(1)$ with
Laplace eigenvalue
$\lambda$,
the definitions of $\mu_{f}$
and $D_{f}$ given in Section \ref{sec:main-result}
still make sense (take $k=0$),
and the equidistribution problem is to improve upon the
trivial bound
\begin{equation}\label{eq:27}
  D_{f}(\phi) \ll_{\phi} 1
\end{equation}
for the \emph{period} $D_f(\phi)$ in the $\lambda \rightarrow \infty$ limit.
Watson's
formula
implies that if $\phi$ is a fixed Maass eigencuspform
on $Y_0(1)$,
then $D_{f}(\phi)$ is closely related
to a central \emph{$L$-value}:
\begin{equation}\label{eq:24}
  \left\lvert D_{f}(\phi) \right\rvert^2 = \lambda^{-1+o(1)}
  L(f  \times f  \times \phi, \onehalf).
\end{equation}

For quite general (finite parts of) $L$-functions
$L(\pi,s)$,
which we always normalize to satisfy a functional equation
under $s \mapsto 1-s$,
there is a commonly accepted notion of a
trivial bound
for the central value $L(\pi,\onehalf)$.
It is called the \emph{convexity bound},
and takes the form $L(\pi,\onehalf) \ll C(\pi)^{1/4+o(1)}$
where $C(\pi) \in \mathbb{R}_{\geq 1}$ is the analytic
conductor attached to $\pi$ by Iwaniec--Sarnak \cite{MR1826269}.
The \emph{subconvexity problem} is to improve
this to $L(\pi,\onehalf) \ll C(\pi)^{1/4-\delta}$ for some positive
constant $\delta$,
while the Grand Lindel\"{o}f Hypothesis --- itself a consequence
of the Grand Riemann Hypothesis --- predicts the
sharper bound $L(\pi,\onehalf) \ll C(\pi)^{o(1)}$.
The subconvexity problem remains open in general for the
triple product $L$-functions considered in this paper.
We refer to \cite{MR1826269,MR1321639,sarnak-progress-que}
for further background.

For the $L$-value appearing in
\eqref{eq:24}, the convexity bound
reads
\begin{equation}\label{eq:25}
  L(f  \times f \times \phi,\onehalf) \ll_{\phi} \lambda^{1 +
    o(1)}.
\end{equation}
Thus under the correspondence between periods
and $L$-values afforded by Watson's formula \eqref{eq:24},
the trivial bound \eqref{eq:27}
for the period
essentially\footnote{That is to say, it coincides up to a bounded multiple of an arbitrarily small power of $\lambda$.} coincides
with the trivial bound \eqref{eq:25} for the $L$-value;
strong bounds for the period imply strong bounds
for the $L$-value,
and vice versa.

This matching between trivial bounds for periods
and trivial bounds for $L$-values holds up
in the weight and squarefree level aspects:
for $f$ a holomorphic newform
of weight $k$ and squarefree level $q$,
a generalization\footnote{Watson's original formula would suffice
  when $q=1$.}
of Watson's formula
due to Ichino
\cite{MR2449948}
that was pinned down precisely in \cite{PDN-HQUE-LEVEL}
asserts
that for each
fixed Maass eigencuspform
or unitary Eisenstein series $\phi$ on $Y_0(1)$,
one has
\begin{equation}\label{eq:32}
\left\lvert D_{f}(\phi) \right\rvert^2
= (q k)^{-1+o(1)}
L(f \times f \times \phi,\onehalf).
\end{equation}
Here the convexity bound reads
$L(f \times f \times \phi, \onehalf) \ll (qk)^{1+o(1)}$.
Thus in the eigenvalue, weight, and squarefree level aspects,
the trivial bounds for periods and $L$-values essentially coincide;
in other words,
the equidistribution and subconvexity problems are essentially
equivalent.

We find
that this equivalence does \emph{not} survive
the passage to non-squarefree levels.
A simple yet somewhat artificial way to see this is to consider
a sequence of twists
$f_p = f_1 \otimes \chi_p$ of a fixed form $f_1$ of level $1$ by
quadratic Dirichlet characters $\chi_p$ of varying prime conductor $p$.
The form $f_p$ has trivial central character
and level $p^2$.
For each $\phi$ as above, one has
\[
L(f_p \times f_p \times \phi,s)
=
L(f_1 \times f_1 \times \phi,s)
\]
for all $s \in \mathbb{C}$.
Thus it does not even make
sense to speak of the ``subconvexity problem'' corresponding to the
equidistribution problem for the measures $\mu_{f_p}$,
as only one $L$-value is involved.
The artificial nature of this example suggests that one could conceivably
still have such an equivalence
by restricting to forms that are \emph{twist-minimal}
(have minimal conductor among their $\GL(1)$ twists),
but this turns out not to be the case;
we find that the equidistribution problem
is (in general) substantially \emph{easier}
than the subconvexity problem (see Section \ref{sec:local-lind-hypoth}).

\subsection{Local Rankin--Selberg integrals}
\label{sec:local-rankin-selberg}
The ideas involved in clarifying the relationship
between the equidistribution and subconvexity problems
discussed in Section \ref{sec:equid-vs-subconvexity}
are exemplified by the following special case.
Let $f$ be a holomorphic newform of fixed weight $k$
and prime power level $q = p ^n$, with
a fixed prime $p$ and varying exponent $n \rightarrow \infty$.
Recall the full-level Eisenstein series
$E_s$,
defined for $\Re(s) > 1$
by the absolutely and uniformly convergent series
\[
E_s(z) =
\sum_{\gamma \in \Gamma_\infty \backslash \Gamma_0(1)}
(\Im \gamma z)^s,
\quad
\Gamma_\infty = \left\{ \pm \left[
    \begin{smallmatrix}
      1&n\\
      &1
    \end{smallmatrix}
  \right] : n \in \Z  \right\}
\]
and in general by meromorphic continuation.
It is known that $s \mapsto E_s$
has no poles in $\Re(s) \geq \onehalf$
except a simple pole at $s=1$ with constant residue.
Those $E_s$ with $\Re(s) = \onehalf$ are called unitary Eisenstein
series,
and furnish the continuous spectrum of $L^2(Y_0(1))$.
We fix $t \in \mathbb{R}$ with $t \neq 0$,
and take $\phi = E_{\onehalf+it}$;
although $\phi$ is not bounded,
it is a natural function against which to test
the measure $\mu_f$.

The period $\mu_f(E_{\onehalf+it})$
is related to the $L$-value
$L(f \times f, \onehalf+it)$,
but not directly.
The ``usual'' integral representation for $L(f \times f, \onehalf+it)$
involves an Eisenstein series for
the group $\Gamma_0(q)$,
so that the integral cleanly unfolds
(initially for $\Re(s) > 1$, in general by analytic continuation):
\begin{align*}
  \int_{\Gamma_0(q) \backslash \mathbb{H}}
  y^k |f|^2(z) \left( \sum_{\gamma \in \Gamma_\infty \backslash
      \Gamma_0(q)} (\Im \gamma z)^s \right)
  \, \frac{d x \, d y }{y^2}
  &=
  \int_{x=0}^{1} \int_{y=0}^{\infty}
  y^{k-1+s} |f|^2(z)
  \, \frac{d x \, d y }{y} \\
  &= \frac{ \Gamma(s+k-1) }{ (4 \pi )^{s+k-1} }
  \sum_{n \in \mathbb{N}}
  \frac{\lambda_f(n)^2}{n^s} \\
  &\approx \frac{ \Gamma(s+k-1) }{ (4 \pi )^{s+k-1} } \frac{L(f \times f, s)}{\zeta (2 s)},
\end{align*}
where $f(z) = \sum_{n=0}^{\infty} \lambda_f(n) n^{(k-1)/2} e^{2
  \pi i n z}$
and $\approx$ denotes equality
up to some very simple Euler factors at $p$
that are bounded from above and below by absolute constants when $\Re(s) = 1/2$
(see Section \ref{sec:proofs}).

On the other hand, the full-level
Eisenstein series $E_s$ is defined relative to
$\Gamma_0(1)$.
Since $f$ is invariant only under
the smaller group $\Gamma_0(q)$, the unfolding
for $\mu_f(E_{\onehalf+it} )$ is not so clean;
instead of giving a simple multiple of the $L$-value ,
it gives its multiple
by a more complicated proportionality factor
$J_f(s)$ satisfying \eqref{eq:29}.
The square of a precise form
of this relation
implies (with $\phi = E_s$ and $s = 1/2+it$)
\begin{equation}\label{eq:30}
  \left\lvert D_f(\phi) \right\rvert^2
  = q^{o(1)}
  \left\lvert
    J_f(s) J_f(1-s)
  \right\rvert
  L(f \times f \times \phi,\onehalf).
\end{equation}
Here the implied constant in $o(1)$ is allowed to depend upon
the weight $k$ and the fixed form $\phi$,
and $L(f \times f \times \phi, \onehalf)
= L(f \times f ,\onehalf + it) L(f \times f ,\onehalf - it)
= |L(f \times f ,\onehalf + it)|^2$.

The content of Ichino's formula \cite{MR2449948}, when
combined with a lemma \cite[Lemma
3.4.2]{michel-2009} of
Michel--Venkatesh,
is that the relation \eqref{eq:30}
continues to hold when $\phi$ is a Maass eigencuspform
provided that $s = s_{\phi,p}$
is chosen so the $p$th Hecke eigenvalue of $\phi$ is $p^{s-\onehalf} +  p^{\onehalf-s}$.
With this normalization, the Ramanujan conjecture asserts $\Re(s) =
1/2$;
it is known unconditionally that $|\Re(s)-1/2| \leq 7/64 < 1/2$
(see \cite{MR1937203}),
so in particular $0 < \Re(s) < 1$.
Thus in all cases, the relative difficulty
of
the equidistribution problem for $\mu_f$
and the subconvexity problem for twists of $f \times f$
(in the $n \rightarrow \infty$ limit)
is governed by the analytic behavior
of $J_f(s)$ in the strip
$\Re(s) \in (1/2 - 7/64, 1/2 + 7/64) \subset (0,1)$.

The quantity $J_f(s)$ is best studied $p$-adically.
Let $W : \PGL_2(\mathbb{Q}_p) \rightarrow \mathbb{C}$ be an $L^2$-normalized Whittaker
newform
for $f$ at $p$;
in classical terms, this function packages all $p$-power-indexed Fourier
coefficients
of $f$ at all cusps of $\Gamma_0(q)$ (see Section \ref{sec:cusps-gamm-four}).
Then the relation \eqref{eq:30} holds
with the definition
\begin{equation}\label{eq:31}
J_f(s)
:=  \int_{k \in \GL_2(\mathbb{Z}_p)} \int_{y \in \mathbb{Q}_p^\times}
\left\lvert
  W \left( \begin{bmatrix}
      y &  \\
      & 1
    \end{bmatrix} k \right)
\right\rvert^2
|y|^{s} \, \frac{d^\times y}{|y|} \, d k.
\end{equation}
We refer to Sections \ref{sec:2-notations}
and \ref{sec:2.2} for precise definitions and normalizations.
When $q = p^1$ is
squarefree, there are explicit formulas for $W$ with which
one may easily show that
\[
J_f(s) =
p^{s-1} \frac{\zeta_p(s) \zeta_p(s+1)}{ \zeta_p(2s)\zeta_p(1)},
\quad \zeta_p(s) := (1-p^{-s})^{-1},
\]
which is consistent with a special case
of the relation \eqref{eq:32}.  When $q = p^n$ with
$n \geq 2$, such as is the case when $f$ is supercuspidal
at $p$, the function $W$ is more difficult to describe
explicitly,
and so it is not immediately clear whether a comparably simple
formula
exists for $J_f(s)$.

\subsection{Local convexity and subconvexity}
\label{sec:local-conv-subc}
In Section~\ref{s:localfe} we prove what we call
a \emph{local convexity bound}
for the local integral $J_f(s)$ as given by \eqref{eq:31}.
The terminology
is justified by the proof,
which we now illustrate. We continue to assume that $f$ is a  newform of prime power level $q = p ^n$, and let $\pi$
be the representation of $\GL_2(\mathbb{Q}_p)$ generated by $f$.
The local $\GL(2) \times \GL(2)$
functional equation (see Proposition \ref{p:localfe}, or
\cite{Ja72})
asserts that the normalized local Rankin--Selberg integral
\begin{equation}\label{eq:37}
\Jfcomplete(s)
:= \frac{\zeta_p(2s)}{L(\pi  \times \pi ,s)}
J_f(s)
\end{equation}
satisfies
\begin{equation}\label{eq:14}
  \Jfcomplete(s) = C^{s-1/2} \Jfcomplete(1-s),
\end{equation}
where $C = C(f \times f)$ is the conductor of
the Rankin--Selberg self-convolution of $f$;
the latter is a power of $p$ that
satisfies
$1 \leq C \leq p^{n+1}$ (see Proposition \ref{propnN}).

Our assumption that $W$ is $L^2$-normalized
implies the trivial bound $\Jfcomplete(s) \ll 1$ for $\Re(s) = 1$,
which we may transfer to the bound $\Jfcomplete(s) \ll C^{-1/2}$ for
$\Re(s)=0$ via the functional equation \eqref{eq:14}.
Interpolating these two bounds by the
Phragmen--Lindel\"{o}f principle,
and using that $\Jfcomplete(s) \asymp J_f(s)$ uniformly for $\Re(s)
\geq \delta > 0$,
we deduce
$J_f(s) \ll C^{-1/2 + \Re(s)/2}$
uniformly for $\Re(s)$ in any compact subset
of $(0,1)$.
If $\Re(s) = 1/2$,
which under the Ramanujan conjecture we
may always assume to be the case in applications,
then the local convexity bound just deduced reads
\begin{equation}\label{eq:8}
  C^{1/2} J_f(s)
  \ll C^{1/4}.
\end{equation}

The proof we have just sketched of \eqref{eq:8}  is
analogous to that of the
(global) convexity bound for $L(f \times
f \times \phi, \onehalf)$,
which augments a trivial bound
in the region of absolute convergence with the functional
equation
and the Phragmen--Lindel\"{o}f principle
(see \cite[Sec 5.2]{MR2061214}).
We refer to a bound that improves upon \eqref{eq:8}
by a positive of power of $q$ as a \emph{local subconvex bound},
and to the problem of producing such a bound as a local
subconvexity problem.

\subsection{QUE versus local and global subconvexity}
\label{sec:que-versus-local}
The upshot of the above considerations is the following.
Preserve the notation and assumptions
of Sections \ref{sec:local-rankin-selberg}
and \ref{sec:local-conv-subc}.
Assume also, for simplicity, that $\Re(s) = 1/2$.
We may
rewrite the formula \eqref{eq:30}
in the suggestive form
\begin{equation}\label{eq:11}
  \left\lvert D_f(\phi)  \right\rvert^2
  = q^{o(1)}
  \left\lvert
    \frac{C^{1/2} J_f(s)}{C^{1/4}}
  \right\rvert^2
  \frac{L(f \times f \times
    \phi, \onehalf)
  }
  {
    C^{1/2}
  }.
\end{equation}
Here the local and global convexity bounds read
\begin{equation}\label{eq:33}
\frac{C^{1/2} J_f(s)}{C^{1/4}} \ll 1
\quad \text{ resp. }
\frac{L(f \times f \times \phi, \onehalf)}{C^{1/2}} \ll C^{o(1)},
\end{equation}
where the implied constants are allowed to depend upon
$k$, $s$ and $\phi$.
 Now, note that the intersection
of the convexity bounds \eqref{eq:33}
is essentially\footnote{That is to say, it is equivalent up to $q^{o(1)}$.}
equivalent, via \eqref{eq:11},
to the trivial
bound $D_f(\phi) \ll 1$
for the QUE problem. For emphasis, we summarize as follows:
\begin{observation}\label{thm:que-versus-local-1}
 Fix a prime $p$, an even integer $k$,
  a complex number $s$,
  and either a Maass eigencuspform
  $\phi$
  with $p$th normalized
  Hecke eigenvalue $p^{s-1/2} + p^{1/2-s}$ or a unitary
  Eisenstein
  series $\phi = E_{s}$
  on $Y_0(1)$.
  Suppose, for simplicity, that $\Re(s) = 1/2$.
  Then the following are equivalent
  (with all implied constants
  allowed to depend upon $p$, $k$, and $\phi$):
  \begin{enumerate}
  \item (Equidistribution in
    the depth aspect with a power savings)
    There exists $\delta > 0$
    so that
    $D_f(\phi) \ll q^{-\delta}$ for all holomorphic
  newforms $f$ of weight $k$
  and prime power level $q = p^n$.

  \item There exists $\delta > 0$
    so that for each holomorphic
  newform $f$ of weight $k$
  and prime power level $q = p^n$, at least one of the following bounds
    hold:
    \begin{enumerate}
    \item  (Global subconvexity without excessive conductor-dropping)\footnote{This estimate is implied by a
        global subconvex bound,
        which saves a small negative power of $C$ rather than of
        $q$,
        together with a condition
        of the form $\log(C) \geq \alpha \log(q)$
        for some fixed $\alpha > 0$.
        Note, for instance, that $C \geq q$ if $f$ is twist-minimal,
        in which case we may drop the phrase
        ``without excessive conductor dropping''.}
      $$
      \frac{
        L(f \times f \times \phi, 1/2)
      }
      {
        C^{1/2}
      }
      \ll
      q^{-\delta},
     $$
    \item (Local subconvexity)
      \begin{equation}\label{e:locsubconvex}
      \frac{C^{1/2} J_f(s)}{C^{1/4}}
      \ll q^{-\delta}.
     \end{equation}
    \end{enumerate}
  \end{enumerate}
\end{observation}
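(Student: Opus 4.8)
The plan is to derive the observation formally from the exact identity~\eqref{eq:11} together with the local and global convexity bounds~\eqref{eq:33}; no new analytic input is required beyond what has already been recorded, the argument being a bookkeeping exercise with the $q^{o(1)}$ factors. Throughout, abbreviate $a := \lvert C^{1/2} J_f(s)/C^{1/4} \rvert$ and $b := L(f \times f \times \phi, \onehalf)/C^{1/2}$, which are nonnegative, so that~\eqref{eq:11} reads $\lvert D_f(\phi) \rvert^2 = q^{o(1)} \, a^2 b$, while~\eqref{eq:33} (available since $\Re(s) = \onehalf$) reads $a \ll 1$ and $b \ll C^{o(1)}$. Since $1 \le C \le p^{n+1}$ by Proposition~\ref{propnN} and $p$ is fixed, one has $\log C \le \log q + O(1)$, hence $C^{o(1)} = q^{o(1)}$ as $q \to \infty$; we use this silently below. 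We also use that for fixed $k$ there are only finitely many newforms of level $q = p^n \le Q$, so any bound valid for all sufficiently large $q$ extends, after enlarging the implied constant, to all $f$.

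For $(2) \Rightarrow (1)$, fix $\delta > 0$ as in~(2) and a newform $f$ of weight $k$ and level $q = p^n$. If the local subconvex bound~\eqref{e:locsubconvex} holds for $f$, then $a \ll q^{-\delta}$; feeding this and the global convexity bound $b \ll q^{o(1)}$ into $\lvert D_f(\phi) \rvert^2 = q^{o(1)} a^2 b$ gives $\lvert D_f(\phi) \rvert^2 \ll q^{-2\delta + o(1)}$. If instead the global bound~(2)(a) holds for $f$, then $b \ll q^{-\delta}$; feeding this and the local convexity bound $a \ll 1$ into the same identity gives $\lvert D_f(\phi) \rvert^2 \ll q^{-\delta + o(1)}$. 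In either case $D_f(\phi) \ll q^{-\delta/3}$ once $q$ is large, hence for all $f$, which is~(1).

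For the converse $(1) \Rightarrow (2)$, fix $\delta > 0$ with $D_f(\phi) \ll q^{-\delta}$ for all $f$, so that~\eqref{eq:11} yields $a^2 b \ll q^{-2\delta + o(1)}$. Fix a newform $f$ of level $q = p^n$. If $a \le q^{-\delta/2}$, then~\eqref{e:locsubconvex} holds for $f$ with exponent $\delta/2$. Otherwise $a^2 > q^{-\delta}$, and dividing the product bound by $a^2$ gives $b \ll q^{-2\delta + o(1)} \cdot q^{\delta} = q^{-\delta + o(1)}$, so the global bound~(2)(a) holds for $f$ with exponent $\delta/2$. Thus, for $q$ large, every $f$ satisfies at least one of~(2)(a),~(2)(b) with exponent $\delta/3$; enlarging the implied constants to absorb the finitely many small~$q$ completes the proof. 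The only step carrying any content is this last dichotomy --- a small quantity factoring as a product forces one of its factors to be small --- which is precisely why~(2) is stated as an alternative rather than a single bound; all the genuine analysis has been isolated in~\eqref{eq:11}, whose derivation combines Ichino's formula~\cite{MR2449948}, the Michel--Venkatesh lemma~\cite[Lemma~3.4.2]{michel-2009}, and the local functional equation~\eqref{eq:14}.
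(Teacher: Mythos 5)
Your argument is correct and formalizes exactly what the paper intends: the Observation is stated immediately after the identity $\lvert D_f(\phi)\rvert^2 = q^{o(1)}\, a^2 b$ in~\eqref{eq:11} and the two convexity bounds~\eqref{eq:33}, and the paper does not spell out the dichotomy argument because it regards the Observation as a direct reading of~\eqref{eq:11}. Your write-up --- using $a\ll 1$ to transfer a global savings, $b\ll q^{o(1)}$ to transfer a local savings, and the product-factoring dichotomy for the converse --- is precisely that argument made explicit, with the bookkeeping of $q^{o(1)}$ and the finitely-many-small-$q$ caveat handled correctly.
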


\begin{remark}
  We have stated the above equivalence as an observation
  (rather than as, say, a theorem)
  because one of the main results of this paper
  is that ``local subconvexity'' holds
  in a strong form
  (see Section \ref{sec:local-lind-hypoth}).
\end{remark}

\subsection{Local Lindel\"{o}f hypothesis}
\label{sec:local-lind-hypoth}
One might argue that the more interesting objects
in the identity \eqref{eq:11}
are the global period $D_f(\phi)$ and
the global $L$-value $L(f \times f \times
\phi,\onehalf)$,
rather than the local period $J_f(s)$.
One would like to compare
precisely the difficulty of the QUE problem and the global
subconvexity problem.
In order to do so via \eqref{eq:11},
one must understand the true order of magnitude of $J_f(s)$.
Suppose once again, for simplicity,
that $\Re(s) = 1/2$.
A global heuristic\footnote{The heuristic involved
  a computation by the first-named
  author, without appeal to triple product formulas,
  of an average of $|D_f(\phi)|^2$ over $f$ of level $p^{2 m}$
  (see \cite{PDN-todo}).}
suggested that
one should have $J_f(s) \approx q^{-1/2 + o(1)}$
in a mean-square sense.
This expectation
would be consistent with
the individual bound
\begin{equation}\label{eq:9}
  C^{1/2} J_f(s) \ll
  (C/q)^{1/2} q^{o(1)},
\end{equation}
which we term the \emph{local Lindel\"{of} hypothesis}.

In the special case $q = p^n$ relevant for Observation~\ref{thm:que-versus-local-1}, we remark that $C/q \leq p$
with equality if and only if $n$ is odd (see Proposition \ref{propnN}),
so that one should regard the RHS of \eqref{eq:9} as being
essentially bounded as far as the depth aspect is concerned.
We may rewrite the bound \eqref{eq:9}
in the form
$C^{1/2} J_f(s) \ll C^{1/4} (C/q^2)^{1/4} q^{o(1)}$;
since $C/q^2 \ll_p
q^{-1}$,
we see that \eqref{eq:9} implies \eqref{e:locsubconvex} in a strong sense.
This makes clear the analogy with the (global) Lindel\"{o}f hypothesis,
as described in Section \ref{sec:equid-vs-subconvexity}.

One of the main technical results of this paper is
a proof of the bound \eqref{eq:9}
for all newforms on $\PGL(2)$. The proof goes
by an explicit case-by-case calculation
of $J_f(s)$, and yields the more precise bound
\begin{equation}\label{eq:10}
C^{1/2} J_f(s) \le 10^{3\omega(q)}\tau(q/\sqrt{C}) (C/q)^{1/2},
\end{equation}
where $\tau(n)$ (resp. $\omega(n)$) denotes the number of positive divisors (resp. prime divisors) of $n$. We remark that $q/\sqrt{C}$ is always integral, and equals 1 if and only if $q$ is squarefree. As a byproduct of our explicit calculations,
we obtain a precise generalization of Watson's formula
to certain triple product integrals involving newforms
of non-squarefree level (see~Theorem \ref{thm:watson-ext}).

By the discussion of Section \ref{sec:local-conv-subc},
it follows
that the global convexity bound is remarkably
\emph{stronger} than the trivial bound for the QUE problem,
or in other words,
that the subconvexity problem for $L(f \times f \times
\phi,\onehalf)$ in the depth aspect ($f$ of level $p^n$, $p$
fixed, $n \rightarrow \infty$)
is much
harder than the corresponding equidistribution problem,
in contrast to the essential equivalence of their difficulty
in the eigenvalue, weight and squarefree level aspects.

The above situation is somewhat reminiscent
of how
the problem of establishing
the equidistribution of Heegner points
of discriminant $D$
on $Y_0(1)$ ($D \rightarrow -\infty$) is essentially equivalent
to a subconvexity
problem when $D$ traverses a sequence of fundamental
discriminants (c.f. \cite{MR931205}),
but reduces to any nontrivial bound
for the $p$th Hecke eigenvalue of Maass forms on $Y_0(1)$
when $D = D_0 p^{2n}$ for some fixed fundamental discriminant
$D_0$ and some increasing prime power
$p^n$.

\begin{remark}
  Let $f_1$ and $f_2$ be a pair of
  $L^2$-normalized
  holomorphic newforms,
  of the same fixed weight,
  on $\Gamma_0(p^n)$
  with $n \geq 2$.
  One knows that
  \begin{equation}\label{eq:20}
    C := C(f_1 \times f_2) \leq p^{2 n}.
  \end{equation}
  There is a sense in which $C$ measures the difference between
  the representations of $\PGL_2(\mathbb{Q}_p)$ generated by
  $f_1$ and $f_2$, and that for typical $f_1$ and $f_2$, the
  upper bound in \eqref{eq:20} is attained.  This
  perspective is consistent with the much stronger bound $C \leq
  p^{n+1}$ that holds on the thin diagonal subset $f_1 = f_2$,
  and also with the explicit formulas for $C$ given in
  \cite{MR1606410}.  We expect that the problems of improving
  upon the Cauchy--Schwarz bound $\int \overline{f_1}
  f_2 \phi \ll_\phi 1$ (integral is over $\Gamma_0(q) \backslash
  \mathbb{H}$ with respect to the hyperbolic probability measure) and the
  convexity bound $L(f_1 \times f_2 \times \phi,1/2) \ll
  C^{1/2}$ should have comparable difficulty if and only if the
  upper bound in \eqref{eq:20} is essentially attained.  If
  reasonable,
  this
  expectation suggests a correlation between
  the smallness of $C$ and the
  discrepancy of difficulty between
  the corresponding equidistribution and
  subconvexity problems.
\end{remark}

\subsection{Local Riemann hypothesis}
\label{sec:local-riem-hypoth}
Maintain the assumption that $f$ is a newform of level $p^n$
that generates
a representation $\pi$
of $\PGL_2(\mathbb{Q}_p)$.
Numerical experiments strongly suggest
that
the normalized local Rankin--Selberg integral
$\Jfcomplete(s)$ (see \eqref{eq:37}),
which is an essentially palindromic polynomial\footnote{The local functional equation for $\GL(2) \times \GL(2)$ implies that $\Jfcomplete(s) = P_f(p^s)$ for some $P_f(t)$ in $\C[t, 1/t]$ satisfying $P_f(t) = p^{-N/2} t^N P_f(p/t)$ where the integer $N$ is defined by the equation $C(f \times f) = p^N$.} in $p^{\pm s}$,
has all its zeros on the line $\Re(s) = 1/2$.\footnote{
  More precisely, it seems that $J_f^*(s)$ has its zeros on
  $\Re(s) = 1/2$ unless $\pi$ is a ramified quadratic
  twist of a highly non-tempered spherical representations,
  specifically unless $\pi  = \beta |.|^{s_0} \boxplus \beta
  |.|^{-s_0}$
  with $s_0 \in \mathbb{R}$, $|s_0| > 1/4 + \delta_p$
  (see Section \ref{sec:local-statement-result} for notation);
  here $\delta_p$ is a positive real that satisfies
  $\delta_p \rightarrow 0$
  as $p \rightarrow \infty$.
  By the classical bound $|s_0| \leq 1/4$, the latter
  possibility
  does not occur.}

We suspect that this ``local Riemann Hypothesis'' should follow from
known properties
of the classical polynomials implicit in our formulas
for $\Jfcomplete(s)$ (see Theorem \ref{t:mainlocal}),
but it would be interesting to have a more conceptual
explanation, or a proof that
does not rely upon our brute-force computations.
It seems reasonable to expect that such an alternative explanation
would lead to a different proof of the local Lindel\"{o}f bound
\eqref{eq:9}.

\begin{example}
  Suppose that $\pi$ has
  ``Type 1''
  according to the classification recalled
  in Section \ref{sec:local-statement-result}.
  Let $p^{2 g}$ ($g \geq 1$)
  be the conductor of $\pi$.
  Suppose that $p^{2 g}$ is also the conductor
  of $\pi \times \pi$; equivalently,
  $\pi$ is twist-minimal.
  Then (the calculations leading to) Theorem \ref{t:mainlocal}
  imply that
  $\Jfcomplete(s)$
  differs by a unit in $\mathbb{C}[p^{\pm s}]$
  from $F(p^{-s})$,
  where
  $F$ is the integral polynomial
  \[
  F(t)
  =
  1 +
  \sum_{j=1}^{g-1}
  (p^j-p^{j-1})
  t^{2 j}
  + p^g t^{ 2g}
  \in \mathbb{Z}[t].
  \]
\end{example}

\begin{example}
  Suppose that $\pi$ has
  ``Type 2''
  (see Section \ref{sec:local-statement-result})
  and conductor $p^{2 g + 1}$ ($g \geq 1$).
  Then as above,
  $\Jfcomplete(s)$
  differs by a unit in $\mathbb{C}[p^{\pm s}]$
  from $F(p^{-s})$ with
  \[
  F(t)
  =
  \sum_{j=0}^{g}
  p^{j} t^{2 j}
  -
  \sum_{j=0}^{g-1}
  p^{j} t^{2 j + 1}
  \in \mathbb{Z}[t].
  \]
\end{example}

In either example, $F$ satisfies the formal
properties of the $L$-function of a smooth projective
curve of genus $g$ over $\mathbb{F}_p$;
for example, the roots of $F$ come in complex conjugate pairs,
they have absolute value $p^{-1/2}$,
and $F$ satisfies the functional equation
$F(1/p t) = p^{-g} t^{-2 g} F(t)$.
The geometric significance of this, if any, is unclear.

\subsection{A sketch of the proof}\label{sec:sketch-proof}
The essential inputs to our method for proving~\eqref{eq:10} are the local functional
equations for $\GL(2)$ and $\GL(2) \times \GL(2)$, and some
knowledge of the behavior of
representations of $\GL(2)$  under twisting by $\GL(1)$;
specifically, for $\mu$ on $\GL(1)$
and $\pi$ on $\PGL(2)$, we use that the formula $C(\pi \mu) =
C(\pi)$
holds
whenever $C(\mu)^2 < C(\pi)$. Here and below,
$C(\cdot)$ is the conductor of a representation.

Write $F = \mathbb{Q}_p$,  $|.  |=$ the standard $p$-adic absolute value,
$U = \{x \in F: |x| = 1\} = \mathbb{Z}_p^\times$,
$G = \GL_2(F)$,
$n(x) = \left[
  \begin{smallmatrix}
    1&x\\
    &1
  \end{smallmatrix}
\right]$ for $x \in F$,
$a(y) = \left[
  \begin{smallmatrix}
    y&\\
    &1
  \end{smallmatrix}
\right]$ for $y \in F^\times $,
$N = \{n(x): x \in F\}$,
$K = \GL_2(\mathbb{Z}_p)$,
and $Z = \{\left[
  \begin{smallmatrix}
    z&\\
    &z
  \end{smallmatrix}
\right] : z \in F^\times \}$.
We sketch
a proof of the bound \eqref{eq:10}
in the simplest case
that $f$ is a  newform of prime power level $q = p ^n$, and
 $\pi$, the local representation at $p$ attached to $f$, is a supercuspidal
representation of $G$ with trivial central
character, realized in its Whittaker model
with $L^2$-normalized newform $W$.
Let $f_3 : Z N \backslash G \rightarrow
\mathbb{C}$ be the function given by
$f_3(n(x)a(y)k) = |y|^{s}$ in the Iwasawa decomposition.
We wish to compute the local integral
$J_f(s) = \int_{Z N \backslash G} |W|^2 f_3$.
It is convenient to do so in the Bruhat decomposition,
where our measures are normalized so that
\begin{equation}\label{eq:2was}
  \frac{\zeta_p(1)}{\zeta_p(2)}
  \int_{Z N \backslash G} |W|^2 f_3
  =
  \int_{x \in F}
  \max(1,|x|)^{-2s}
  \int_{y \in F^\times }
  |W|^2(a(y) w n(x))
  |y|^{s-1} \, d^\times y \, d x.
\end{equation}

Because the LHS of  \eqref{eq:2was}
satisfies the $\GL(2) \times \GL(2)$ functional
equation, it suffices
to determine the coefficients
of the \emph{positive} powers of $p^s$ occuring on the RHS.
The left $N$-equivariance
of $W$ implies that no such positive powers
arise from the integral over $|x| \geq
C(\pi)^{1/2}$, an implication which in classical terms
amounts to the
calculation of the widths of the cusps of
$\Gamma_0(q)$ (see Section \ref{sec:cusps-gamm-four}).
In the remaining
range $|x| < C(\pi)^{1/2}$,
we show  that
$W(a(y) w n(x))$ is supported on
the coset $|y| = C(\pi)$
of the unit group $U$ in $F^\times $.
Thus by the invariance of the inner product
on $\pi$, the integral over $F^\times $ in \eqref{eq:2was}
is simply $C(\pi)^{s-1}$.
Integrating over $x$ gives
\begin{equation}\label{eq:16}
\frac{\zeta_p(1)}{\zeta_p(2)}
\int_{Z N \backslash G} |W|^2 f_3
= C(\pi)^{s-1}
\left(
  \int_{\substack{ x \in F \\
      |x| < C(\pi)^{1/2}
    }
    }
  \max(1,|x|)^{-2 s} dx
\right)
+ \sum_{m \in \mathbb{Z}_{\geq 0}} \frac{c_m}{p^{m s}}
\end{equation}
for some coefficients $c_m$.
After determining $c_m$ via the $\GL(2) \times \GL(2)$ functional
equation,
we end up with a formula for $\int |W|^2 f_3$
in terms of $C(\pi)$ and $C(\pi \times \pi)$
that shows, by inspection, that $\int |W|^2 f_3$ satisfies the desired
bounds.\footnote{It would be possible
  to establish this by a slightly softer argument, but we
  believe
  that having precise formulas is of independent interest.}

A key ingredient in the above argument
was the support condition
on $W(a(y) w n(x))$ for $|x| < C(\pi)^{1/2}$.
We derive it via a
Fourier decomposition
over the the character group
of $U$ and invariance properties of $W$.
Indeed, the $\GL(2)$ functional equation
implies
\begin{equation}\label{eq:15}
W(a(y) w n(x))
= \sum_{
  \substack{
    \mu \in \hat{U}  \\
    C(\pi \mu) = |y|
  }
}
\mu(y) \eps(\pi \mu)
G(x,\mu),
\end{equation}
where
$G(x,\mu) = \int_{y \in F^\times } \psi(x y) \mu(y) W(a(y)) = \int_{y \in U} \psi(x y) \mu(y)$ and $\eps(\pi \mu) = \eps(\pi \mu, 1/2)$ is the local $\eps$-factor (see Section \ref{sec:supp-whitt-newf}).
The characters $\mu$ contributing
nontrivially to \eqref{eq:15} all satisfy
$G(x,\mu) \neq 0$, which implies $C(\mu) \leq x$; in that case our assumption
$|x|^2 < C(\pi)$ and our knowledge of the twisting behavior of
$\pi$ implies $C(\pi \mu) = C(\pi)$.
It follows that $W(a(y) w n(x)) = 0$ unless $|y| = C(\pi)$.

\begin{remark}\label{rmk:bulldoze}
It seems worthwhile to note that one may also
compute the RHS of \eqref{eq:2was} in ``bulldozer'' fashion,
as follows.
Suppose for simplicity that $\pi$ is supercuspidal.
We may view the integral over
$y \in F^\times $ as the inner product of the functions
$W(a(y) w n(x))$ and
$W(a(y) w n(x)) |y|^s$, whose Mellin transforms
are (by definition) local
zeta integrals;
applying the Plancherel
theorem on $F^\times $ and the $\GL(2)$ functional equation,
we arrive at the formula
\begin{equation}\label{eq:3}
  \frac{\zeta_p(1)}{\zeta_p(2)}
  \int_{Z N \backslash G} |W|^2 f_3
  = \sum_{\mu \in \hat{U}} C(\pi \mu)^{s-1}
  \int_{x \in F}
  \frac{|G(x,\mu)|^2}{\max(1,|x|)^{2s}} dx.
\end{equation}
This also follows from \eqref{eq:15} by the Plancherel theorem
on $U$.
Substituting into \eqref{eq:3}
the fact that $C(\pi \mu) \leq \max(C(\pi),C(\mu)^2)$
with equality if $C(\mu)^2 \neq C(\pi)$,
evaluating $|G(x,\mu)|$,
and summing some geometric series,
we find that
\begin{equation}\label{eq:5}
  \frac{\zeta_p(1)}{\zeta_p(2)}
  \int_{Z N \backslash G}
  |W|^2 f_3
  = p^{n (s-1)}
  \left\{ 1 +
    \sum_{1 \leq a < n/2}
    \frac{\zeta_p(1)^{-1}}{p^{(2s-1)a}}
  \right\}
  + p^{-r}
  + \zeta_p(1)
  \sum_{C(\mu)^2 = C(\pi)}
  \frac{C(\pi \mu)^{s-1}}{C(\pi)^{s}},
\end{equation}
where $C(\pi) = p^n$ and $r = \lfloor n/2 \rfloor + 1$. This
identity
agrees with \eqref{eq:16}, and shows that the only barrier
to
obtaining immediately an explicit result
is the potentially subtle behavior of the conductors
of twists of $\pi$
by characters of conductor $C(\pi)^{1/2}$
(see also Remark \ref{rmk:direct-iwasawa}).
It suggests another approach to our local calculations
(write $\pi = \pi_0 \mu_0$ with $\pi_0$ twist-minimal
and compute away),
but one that would be more difficult to implement when
$\pi$ is a ramified twist of a principal series or Steinberg representation.

The approach sketched in this remark has the virtue
of applying to arbitrary vectors $W \in \pi$,
leading to formulas generalizing those that we have given in this paper
in the special case that $W$ is the newvector.
\end{remark}

\def\SOMELETTER{{\ell}}
\subsection{Fourier expansions at arbitrary cusps}\label{sec:four-expans-at}
Let  $f$  be a newform on $\Gamma_0(q)$, $q \in \mathbb{N}$.
In order to apply a variant of the Holowinsky--Soundararajan method
in Section \ref{sec:proof-theor-refthm:m}, we require
some knowledge
of the sizes of the normalized Fourier coefficients
$\lambda(\SOMELETTER;\mathfrak{a})$ of $f$ at
an \emph{arbitrary} cusp $\mathfrak{a}$ of $\Gamma_0(q)$.
It is perhaps not widely known that such Fourier coefficients
are \emph{not} multiplicative in general;
this lack of multiplicativity introduces an additional
 complication
in our arguments.
More importantly, we need some knowledge
of the sizes of the coefficients $\lambda(\SOMELETTER;\mathfrak{a})$
when $\SOMELETTER \mid q^\infty$.
For example, the ``Hecke bound''
$\lambda(\SOMELETTER;\mathfrak{a}) \ll \SOMELETTER^{1/2}$
would not suffice for our purposes.

Let $\lambda(\SOMELETTER) = \lambda(\SOMELETTER;\infty)$ denote the $\SOMELETTER$th normalized Fourier coefficient of
$f$ at the cusp $\infty$.
A complete description of the coefficients $\lambda(\SOMELETTER)$
is given by Atkin and Lehner \cite{MR0268123};
for our purposes, it is most significant
to note that $\lambda(p^\alpha) = 0$ for each $\alpha \geq 1$ if $p$
is a prime
for which $p^2 | q$.

If $\mathfrak{a}$ is the image of $\infty$
under an Atkin--Lehner operator (an element of the normalizer of $\Gamma_0(q)$ in $\PGL_2^+(\mathbb{Q})$),
then the coefficients $\lambda(\SOMELETTER)$ and $\lambda (\SOMELETTER;\mathfrak{a})$ are related
in a simple way;
this is always the case when $q$ is squarefree,
in which case the Atkin--Lehner operators act transitively on the
set of cusps.
Similarly, there is a simple relationship between the Fourier coefficients
$\lambda(\SOMELETTER,\mathfrak{a}), \lambda(\SOMELETTER,\mathfrak{a}')$ of $f$ at each pair of cusps $\mathfrak{a}$, $\mathfrak{a}'$
related by an Atkin--Lehner operator (see \cite{2010arXiv1009.0028G}).
However, such considerations do not suffice
to describe $\lambda(\SOMELETTER;\mathfrak{a})$ explicitly when
$\mathfrak{a}$ is not in the Atkin--Lehner orbit of $\infty$.

Our calculations in Section
\ref{sec:local-calculations}
lead to a precise description of $\lambda(\SOMELETTER;\mathfrak{a})$
for arbitrary cusps $\mathfrak{a}$,
at least in a mildly averaged sense.
This may be of independent
interest.
To give some flavor for the results obtained,
suppose that $q = p^n$ with $n \geq 2$.
The nature of the coefficients
$\lambda(\SOMELETTER;\mathfrak{a})$ depends heavily upon the \emph{denominator} $p^k$ of the
cusp $\mathfrak{\mathfrak{a}}$,
as defined
in
Section \ref{sec:cusps-gamm-four};
briefly, $k$ is the unique integer
in $[0,n]$
with the property that $\mathfrak{a}$ is in the
$\Gamma_0(p^n)$-orbit
of some fraction $a/p^k \in \mathbb{R} \subset
\mathbb{P}^1(\mathbb{R})$ with $(a,p) = 1$.
The Atkin--Lehner/Fricke involution
swaps the cusps of denominator $p^k$
and $p^{n-k}$.

Say that $f$ is \emph{$p$-trivial}
at a cusp $\mathfrak{a}$ if
$\lambda(p^\alpha;\mathfrak{a}) = 0$ for all
$\alpha \geq 1$.
For example,
the result of Atkin--Lehner mentioned above
asserts that $f$ is $p$-trivial at $\infty$.
We observe the ``purity'' phenomenon:
$f$ is $p$-trivial at $\mathfrak{a}$
unless $n$ is even and the denominator $p^k$ of $\mathfrak{a}$ satisfies
$k = n/2$ (see Proposition \ref{keypropositionfourier}).
In the latter case, let us call $\mathfrak{a}$ a \emph{middle
cusp}.

In Section \ref{sec:cusps-gamm-four},
we compute for each $\alpha \geq 0$ the mean square of
$\lambda_f(p^\alpha;\mathfrak{a})$
over all middle cusps $\mathfrak{a}$;
an accurate evaluation of this mean square, together with
the aforementioned ``purity'',
turns out to be equivalent
to our local Lindel\"{o}f hypothesis described above (see Remark
\ref{rmk:direct-iwasawa}).
We observe that the ``Deligne bound''
$|\lambda(\SOMELETTER;\mathfrak{a})| \leq \tau(\SOMELETTER)$
can fail
in the strong form
$\lambda(p^\alpha;\mathfrak{a}) \gg p^{\alpha/4}$
for some $\alpha > 0$ when $f$ is not twist-minimal
(see Remark \ref{rmk:failure-of-deligne-bound}).
In general,
$\lambda(\SOMELETTER;\mathfrak{a})$
may be evaluated exactly in terms of $\GL(2)$ Gauss sums
(e.g., combine \eqref{eq:15} and \eqref{lambdajwhittaker2}
when $\pi$ is supercuspidal).
We suppress further discussion of this point for sake of brevity.

\subsection{Further remarks}
Our calculations in Section \ref{sec:local-calculations},
being local,
apply in greater generality than we have used them.
For example, they imply that the pushforward
to $Y_0(1)$ of the $L^2$-mass of a Hecke-Maass newform on $\Gamma_0(p^n)$ of
bounded Laplace eigenvalue
equidistributes as $p^n \rightarrow \infty$
with $n \geq 2$.
They extend also to non-split quaternion algebras,
where Ichino's formula applies
but the Holowinsky--Soundararajan method does not,
due to the absence of Fourier expansions.
For example, one could establish
that Maass or holomorphic newforms
of increasing level
on compact arithmetic surfaces
satisfy an analogue of Theorem \ref{thm:main-refined}
provided that their level is sufficiently powerful
(c.f. the remarks at the end of Section \ref{sec:equid-vs-subconvexity});
in that context, no unconditional result for forms of increasing
squarefree
level is known.
For automorphic forms
of increasing squared-prime level $p^2$ on definite
quaternion algebras,
an analogue of Theorem \ref{thm:main-refined} had
been derived
earlier by the first-named author
(see \cite{PDN-todo})
via a different method
(i.e., without triple product formulas),
but the bounds obtained there are quantitatively weaker
than those
that would follow from the present work.

After completing
an earlier draft of this paper,
we learned of some interesting parallels in the literature
of some of the analogies presented hitherto.
Lemma 2.1 of Soundararajan and Young \cite{2010arXiv1011.5486S}
gives something resembling a ``local Riemann hypothesis''
for a certain Dirichlet series, studied earlier
by Bykovskii and Zagier,
attached to (not necessarily fundamental)
quadratic discriminants.\footnote{We thank
  M. Young
  for bringing this similarity to our attention.}
Section 9 of a paper of Einsiedler, Lindenstrauss, Michel and
Venkatesh \cite{MR2776363}
establishes what they refer to as ``local subconvexity''
for certain local toric periods,
the proof of one aspect of which
resembles that of what we describe
here as ``local convexity''.
It would be interesting
to understand whether
our work can be understood
together with these parallels in a unified
manner.

\subsection{Acknowledgements} We thank Ralf Schmidt for helping us with the representation theory of $\PGL_2(\Q_p)$, and Philippe Michel for pointing us to the crucial lemma \cite[Lemma
3.4.2]{michel-2009}.
We thank Nahid Walji and Matthew Young for helpful comments on
an earlier draft of this paper. Finally, we would like to thank
the referee for
many helpful comments which have improved
the correctness, clarity, and exposition of this paper.

\section{Local calculations}
\label{sec:local-calculations}
\subsection{Notation and preliminaries}\label{sec:2-notations}

\subsubsection{Groups, measures}
Let $p$ be a prime number, and $F = \mathbb{Q}_p$.\footnote{
  Most of this section reads correctly
  in the more general case that
  $p$ is an arbitrary prime power
  and
  $F$ is a non-archimedean local
  field
  of characteristic zero whose
  residue field has cardinality $p$.
  We work in the restricted generality that we need
  for our global applications
  only because we have not checked that the calculations
  in the Type 3 case of the proof of Theorem \ref{t:mainlocal}
  carry through in this more general context when $p = 2$.
}
Let $\mathfrak{o}$ be its ring of integers,
and $\mathfrak{p}$ its maximal ideal.
Fix a generator $\varpi$ of $\mathfrak{p}$.
Let $|.|$ or $|.|_p$ denote the absolute value
on $F$ normalized so that
$|\varpi| = p^{-1}$.

Let $G = \GL_2(F)$ and $K = \GL_2(\mathfrak{o})$.
For each integral ideal $\mathfrak{a}$ of $\mathfrak{o}$,
let $K_0(\mathfrak{a})$
and $K_1(\mathfrak{a})$ denote the usual congruence
subgroups of $K$:
\[
K_0(\mathfrak{a}) = K \cap \begin{bmatrix}
  \mathfrak{o}  & \mathfrak{o}  \\
  \mathfrak{a}  & \mathfrak{o}
\end{bmatrix},
\quad
K_1(\mathfrak{a})
= K \cap \begin{bmatrix}
  1+ \mathfrak{a}  & \mathfrak{o}  \\
  \mathfrak{a}  & \mathfrak{o}
\end{bmatrix}.
\]
In particular, $K_0(\mathfrak{o}) = K_1(\mathfrak{o}) = K$.
Write
\[
w = \begin{bmatrix}
  0 & 1 \\
  -1 & 0
\end{bmatrix},
\quad
a(y) = \begin{bmatrix}
  y &  \\
  & 1
\end{bmatrix},
\quad
n(x) = \begin{bmatrix}
  1 & x \\
  & 1
\end{bmatrix},
\quad z(t)
= \begin{bmatrix}
  t &  \\
  & t
\end{bmatrix}
\]
for $x \in F, y \in F^\times, t \in F^\times$.
Define subgroups
$N =
\{n(x):  x\in F \}$,
$A = \{a(y): y\in F^\times \}$,
$Z =\{ z(t):
t \in F^\times \}$,
and $B = Z N A = G \cap
\left[
  \begin{smallmatrix}
    *&*\\
    &*
  \end{smallmatrix}
\right]$ of $G$.

We normalize Haar measures as
in~\cite[Section 3.1]{michel-2009}:
The measure $dx$ on the additive group $F$ assigns volume 1
to $\OF$, and transports to a measure on $N$.
The measure $d^\times y$ on the multiplicative group $F^\times$ assigns
volume 1 to $\OF^\times$,
and transports to measures on
$A$ and $Z$.
We obtain a left Haar measure $d_Lb$ on $B$ via
$$d_L(z(u)n(x)a(y)) = |y|^{-1}\, d^\times u \, d x \, d^\times
y.$$
Let $dk$ be the probability Haar measure on $K$.
The Iwasawa decomposition
$G = B K$ gives a left Haar measure $dg = d_L b \, d k$ on $G$;
with respect to the Bruhat decomposition $G =  B \sqcup BwN$,
this measure takes the
form \begin{equation}\label{e:bruhatmeasure}dg =
  \frac{\zeta_p(2)}{\zeta_p(1)}|y|^{-1} \, d^\times u \,
  d^\times y \, dx' \, dx \quad \text{ for } g =
  n(x')a(y)z(u)wn(x), \end{equation} where $\zeta_p(s) =
(1-p^{-s})^{-1}$
(see~\cite[(3.1.6)]{michel-2009}).

\subsubsection{Representations, models}

Fix an additive character $\psi : F \rightarrow \mathbb{C}^1$
with conductor $\mathfrak{o}$. For each generic representation $\sigma$
of $G$, let $\mathcal{W}(\sigma\subscriptp, \psi)$
denote the Whittaker model of $\sigma\subscriptp$ with respect
to $\psi$ (see~\cite{MR0401654}).
For two characters $\chi_1$, $\chi_2$ on $F^\times$, let $\chi_1
\boxplus \chi_2$ denote the principal series representation on
$G$
that is unitarily induced from the corresponding representation of  $B$; this consists of smooth functions $f$ on $G$ satisfying $$f\left(\mat{a}{b}{0}{d} g\right) = |a/d|^{\frac12} \chi_1(a) \chi_2(d)  f(g).$$

\subsubsection{Conductors, $L$-functions, $\eps$-factors}\label{sec:cond-l-funct}
For each character\footnote{We adopt the convention that
  a \emph{character} of a topological group is a continuous (but
  not necessarily unitary) homomorphism into $\C^\times$.}
$\sigma$ of $F^\times$,
there exists a minimal integer $a(\sigma)$ such that
$\sigma(1+t)=1$  for all $t \in \p^{a(\sigma)}$.
For each irreducible admissible
representation $\sigma$ of $G$, there exists a minimal integer $a(\sigma)$ such that $\sigma$ has a $K_1(\p^{a(\sigma)})$-fixed vector. In either case, the integer  $p^{a(\sigma)}$ is called the
local analytic conductor\footnote{In the rest of this paper, we
  will often drop the words ``local analytic" for brevity and
  call this simply the ``conductor".} of $\sigma$;
we denote it by
$C(\sigma)$.

For a representation $\sigma$ of $G$ and a character $\chi$
of $F^\times$, write
$\sigma \chi$ for the representation
$\sigma \otimes (\chi \circ \det)$ of $G$.

Let $L(\sigma, s)$ (resp. $\eps(\sigma,\psi,s)$)
denote the $L$-function (resp. $\varepsilon$-factor)
of an irreducible admissible
representation $\sigma$
of $G$ or a character $\sigma$ of $F^\times$. These local factors are defined in~\cite{MR0401654}.
For $\sigma$ an irreducible admissible representation of $G$, let
$L(\ad \sigma,s)$ denote the
adjoint $L$-function
of $\sigma$, or equivalently,
the standard $L$-function of the adjoint lift of $\sigma$ to
an admissible
representation of $\PGL_3(F)$.

If $\sigma_1$, $\sigma_2$ are
two irreducible admissible representations of $G$,
the
local Rankin--Selberg factors $L(\sigma_1 \times \sigma_2, s)$
and $\eps(\sigma_1 \times \sigma_2,\psi,s)$ are defined
in~\cite{Ja72}.
The local analytic conductor $C(\sigma_1 \times \sigma_2)$
is a nonnegative integral power of $p$,
and can
be defined
by the formula
$\eps(\sigma_1 \times \sigma_2, \psi, s)
= C(\sigma_1 \times
\sigma_2)^{1/2-s}
\eps(\sigma_1
\times \sigma_2, \psi, 1/2)$;
we also let $a(\sigma_1 \times \sigma_2)$
denote the nonnegative integer
for which $C(\sigma_1 \times \sigma_2)
= p^{a(\sigma_1 \times \sigma_2)}$.

\subsubsection{Temperedness}
Let $\pi$ be a generic irreducible admissible
unitarizable representation of $G$
with trivial central character.
The quantity
\begin{equation}\label{eq:7}
  \lambda(\pi) = \begin{cases}0& \text{ if } \pi \text{ is
      tempered, }\\ |s_0|  & \text{ if }\pi\subscriptp \cong
    \beta \ | \cdot |^{s_0} \boxplus \beta^{-1}  \ | \cdot
    |^{-s_0}, \quad s_0 \in  \R, \quad \beta \text{ unitary,
    }\end{cases}
\end{equation}
measures the temperedness of $\pi$.
When $\pi$ arises as the local factor of a cuspidal automorphic
representation of $\GL_2(\mathbb{A})$, it is known that
$\lambda(\pi) \leq 7/64$ (see~\cite{MR1937203}).
For our purposes, it suffices to assume that
$\lambda(\pi) < 1/4$.
We record this assumption as follows:
\begin{condition}\label{cond:temperedness}
  $\pi$ is a generic irreducible admissible
  unitarizable representation of $G$
  with trivial central character
  and $\lambda(\pi) < 1/4$.
\end{condition}

\subsubsection{Classification of representations}\label{sec:class-repr}
Let $\pi$
satisfy Condition \ref{cond:temperedness}.
Write $n = a(\pi)$, and suppose that
$n \geq 2$.
We recall
a certain classification of such $\pi$.
The classification is standard,
although our labeling is not
(and we are not aware of a standard labeling).
\begin{itemize}

\item \textbf{Type 1. } These are the supercuspidal
  representations satisfying
  $\pi\subscriptp\cong\pi\subscriptp\twist\eta\subscriptp$,
  where $\eta\subscriptp$ is the unique non-trivial unramified
  quadratic character of $F^\times$. Equivalently,
  $\pi\subscriptp$ is
  the
  dihedral supercuspidal representation $\rho(E/F,\xi)$
  associated to the unramified quadratic extension $E$ of $F$
  and a
  character $\xi$ of $E^\times$
  that is not $\Gal(E/F)$-invariant.

\item  \textbf{Type 2. } These are the supercuspidal representations satisfying $\pi\subscriptp\ncong\pi\subscriptp\twist\eta\subscriptp$, with $\eta$ as above.

\item     \textbf{Type 3. } In this case $\pi\subscriptp$ is a
  ramified quadratic twist of a spherical representation:
  $$\pi\subscriptp \cong \beta \ | \cdot |^{s_0} \boxplus \beta \ | \cdot |^{-s_0}, \quad s_0 \in i\R \cup (-1/4, 1/4), \quad \beta \text{ ramified, } \ \beta^2=1.$$We denote $\beta_{s_0} = (p^{s_0} + p^{-s_0})^2.$ \medskip

\item     \textbf{Type 4. } In this case $\pi\subscriptp$ is a
  ramified principal series that is not of Type 3:
  $$\pi\subscriptp \cong \beta \boxplus \beta^{-1}, \qquad \beta \text { ramified, unitary character of } F^\times, \qquad \beta^2 \text{ ramified}.$$

\item \textbf{Type 5. }In this case $\pi\subscriptp$ is a ramified
  quadratic twist of the Steinberg representation:
  $$\pi\subscriptp \cong
  \beta\St_{\GL(2)}, \qquad \beta \text{ ramified, }
  \beta^2=1.$$

\end{itemize}

\begin{remark}
  If $p$ is odd, then each supercuspidal representation is
  dihedral, i.e., constructed via the Weil representation from a
  quadratic extension $E$ of $F$ and a non-$\Gal(E/F)$-invariant
  character $\xi$ of $E^\times$.
  Such representations are of Type 1 if $E/F$ is unramified and
  of Type 2 if $E/F$
  is
  ramified.
  If $p$ is even,
  there exist non-dihedral supercuspidals; these are also of Type 2.
\end{remark}

\begin{remark}\label{rem:ram}
  For representations
  of Type 3 or 5,
  the ramified quadratic character
  $\beta$ satisfies $a(\beta) =1$ if
  $p$ is odd
  and $a(\beta) \in \{2,3 \}$ if $p$ is even.
\end{remark}

\begin{remark}
  If $\pi$ is of Type 1, 3, 4, or 5,
  then $n$ is even.
  If $\pi$ is of Type 2,
  then $n$ can be either odd or even.
\end{remark}

\subsubsection{Properties of the adjoint conductor}
Let $\pi$ be a generic irreducible admissible unitarizable
representation of $G$ with trivial central character.
Write $n = a(\pi)$ and $N = a(\pi \times \pi)$.
In Section \ref{sec:proofs},
we will establish the
following result concerning the integer $N$ and its
relation to $n$.
We state it here because it will be useful
in interpreting the results to follow.
\begin{proposition}\label{propnN}
  The integer $N$ is even and satisfies $N \le
  n+1$.
  Furthermore, the following conditions on $\pi\subscriptp$ are equivalent:
  \begin{enumerate}
  \item $N=n+1$.
  \item $n$ is odd.
  \item Either
    \begin{enumerate}
    \item $\pi\subscriptp$ is
      the Steinberg representation or an unramified quadratic
      twist thereof
      (in which case $n = 1$), or
    \item $\pi\subscriptp$ is a representation of Type 2 for
      which $n$ is odd.
    \end{enumerate}
  \end{enumerate}
\end{proposition}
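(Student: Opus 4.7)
Since $\omega_{\pi_p}$ is trivial, $\pi_p$ is self-dual and we have the decomposition $\pi_p \times \pi_p \cong \mathbf{1} \oplus \ad \pi_p$, reducing the computation to $N = a(\ad \pi_p)$. The plan is to compute this case-by-case via the local Langlands correspondence and the classification. First I dispose of the small-$n$ cases falling outside the Types 1--5 list (which assumes $n \ge 2$): for $n = 0$, $\pi_p$ is unramified, so $\ad \pi_p$ is unramified and $N = 0$; for $n = 1$, $\pi_p$ is the Steinberg representation or its unramified quadratic twist (case (3a)), and $\ad \pi_p$ is the corresponding Steinberg representation of $\GL_3$, giving $N = 2 = n + 1$, even. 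Both satisfy the claim.

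For $n \ge 2$ of Types 3, 4, or 5, the attached Weil--Deligne parameter $\rho_{\pi_p}$ decomposes explicitly and $\ad \pi_p = \mathrm{Sym}^2 \rho_{\pi_p}$ (using $\det \rho_{\pi_p} = 1$) gives: for Type 3, $\ad \pi_p = |\cdot|^{2 s_0} \oplus \mathbf{1} \oplus |\cdot|^{-2 s_0}$ (since $\beta^2 = 1$), hence $N = 0$; for Type 4, $\ad \pi_p = \beta^2 \oplus \mathbf{1} \oplus \beta^{-2}$, hence $N = 2 a(\beta^2) \le 2 a(\beta) = n$; for Type 5, $\ad \pi_p$ is (a twist of) the $\GL_3$ Steinberg, hence $N = 2 \le 2 a(\beta) = n$. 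In each case $N$ is even with $N < n + 1$, and $n$ is even, so condition (3) fails, consistently with the statement.

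For Types 1 and 2 dihedral ($\pi_p = \rho(E/F, \xi)$ with $\xi$ a character of $E^\times$ of conductor $\p_E^m$, $\xi \ne \xi^\sigma$), a Mackey computation yields
$$\ad \pi_p \cong \eta_{E/F} \oplus \Ind_E^F(\xi/\xi^\sigma),$$
where $\xi^\sigma(x) := \xi(\sigma(x))$. Combined with the conductor--discriminant formula $a_F(\Ind_E^F \chi) = v_F(\mathfrak{D}_{E/F}) + f_{E/F} \cdot a_E(\chi)$, this gives for Type 1 (unramified $E/F$, $f = 2$, $v_F(\mathfrak{D}) = 0$) the formulas $n = 2 m$ and $N = 2 a_E(\xi/\xi^\sigma) \le n$ (both even), and for Type 2 dihedral with $p$ odd (ramified $E/F$, $f = 1$, $v_F(\mathfrak{D}) = 1$, $a(\eta_{E/F}) = 1$) the formulas $n = 1 + m$ and $N = 2 + a_E(\xi/\xi^\sigma)$. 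The crux is computing $a_E(\xi/\xi^\sigma)$: using $\sigma(\varpi_E) = -\varpi_E$, direct expansion shows that the map $u \mapsto u/u^\sigma$ sends $1 + \p_E^k$ surjectively onto $(1 + \p_E^k)/(1 + \p_E^{k+1})$ when $k$ is odd and into $1 + \p_E^{k+1}$ when $k$ is even. Transferring via $\xi$ yields $a_E(\xi/\xi^\sigma) = m$ when $m$ is even and $a_E(\xi/\xi^\sigma) = m - 1$ when $m$ is odd, so $N$ is always even and $N = n + 1$ precisely when $m$ is even, i.e., when $n$ is odd; this recovers case (3b).

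The main obstacle is the filtration analysis in Type 2 together with handling $p = 2$: for dihedral Type 2 supercuspidals at $p = 2$ one has $v_F(\mathfrak{D}_{E/F}) \in \{2, 3\}$ and a more delicate $\sigma$-action on $\mathcal{O}_E^\times$, while non-dihedral supercuspidals (which occur only at $p = 2$) must be addressed either by constructing the Langlands parameter directly from the Weil representation or by bulldozing through the relevant $\varepsilon$-factor and twist-behavior identities (using $C(\pi \mu) = C(\pi)$ when $C(\mu)^2 < C(\pi)$, as invoked elsewhere in the paper). A parallel but more intricate computation gives the same conclusions in those cases, completing the equivalence $(1) \Leftrightarrow (2) \Leftrightarrow (3)$.
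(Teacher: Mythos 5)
Your route is genuinely different from the paper's. You compute $N=a(\ad\pi_p)$ on the Galois side via the decomposition $\ad\pi_p=\eta_{E/F}\oplus\Ind_E^F(\xi/\xi^\sigma)$ and the conductor--discriminant formula, whereas the paper never touches Weil--Deligne parameters for Type 2: it derives the parity of $N$ and the equivalence $N=n+1\Leftrightarrow n$ odd purely from the local functional equation $T_{-m+N}=p^{m-N/2}T_m$, the explicitly computed coefficients $R_m$ for $m>0$ (which come from the support lemma and the invariance of the Whittaker inner product), the normalization $\sum T_m p^m=\zeta_p(2)/L(\pi\times\pi,1)$, and the positivity $R_0>0$. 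That argument is uniform in $p$ and treats dihedral and non-dihedral supercuspidals identically. Your computations for Types $1,3,4,5$ and for Type 2 dihedral with $p$ odd are correct (for the latter, note that the cleanest way to nail $a_E(\xi/\xi^\sigma)$ is to use $\xi^\sigma=\xi^{-1}$, hence $\xi/\xi^\sigma=\xi^2$, together with $1+\p_E^{2j}=(1+\p_F^j)(1+\p_E^{2j+1})$ and the triviality of $\xi^2$ on $F^\times$; your description of the image of $u\mapsto u/u^\sigma$ on each graded piece does not by itself determine the image subgroup, though it can be completed to do so).

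The genuine gap is exactly where you flag it: Type 2 at $p=2$. This is not a removable corner case --- Type 2 is the \emph{only} type for which $N=n+1$ occurs, so the entire content of the equivalence $(1)\Leftrightarrow(2)\Leftrightarrow(3)$ lives there, and roughly half of the Type 2 representations (all of them when $p=2$ in the non-dihedral case, and the ramified dihedral ones with $v_F(\mathfrak{D}_{E/F})\in\{2,3\}$) are left to ``a parallel but more intricate computation'' that is never performed. For ramified dihedral $E/F$ with $p=2$ one must track wild ramification in both $n=v_F(\mathfrak{D}_{E/F})+a_E(\xi)$ and $N=a(\eta_{E/F})+v_F(\mathfrak{D}_{E/F})+a_E(\xi/\xi^\sigma)$, where now $a(\eta_{E/F})=v_F(\mathfrak{D}_{E/F})\ge 2$ and the action of $\sigma$ on the unit filtration is no longer governed by $\sigma(\varpi_E)=-\varpi_E$ alone; for the primitive (non-dihedral) supercuspidals there is no induced model at all, and neither of the two strategies you name is carried out. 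As written, the proposal therefore proves the proposition only for $p$ odd. If you want to avoid the $p=2$ casework entirely, the functional-equation argument sketched above is the efficient fix: evenness of $N$ follows from comparing the telescoping sum $\sum_m T_m p^m$ against $\zeta_p(2)/L(\pi\times\pi,1)$, the bound $N\le n+1$ from $T_1\ne 0$ when $n$ is odd, and the implication ($n$ odd $\Rightarrow N=n+1$) from the observation that $N\le n$ would force $T_0=-T_1$ and hence $R_0=0$, contradicting $R_0>0$.
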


\subsubsection{Definition of Ichino integral}\label{sec:defin-ichino-integr}
Let $s$ be a complex parameter,
and $\pi_3 = |.|^{s-1/2} \boxplus |.|^{1/2-s}$ the corresponding
principal series representation of $G$.
It is well-known that
$\pi_3$ is irreducible and unitarizable
if and only if
$\Re(s) = 1/2$ or $s \in (0,1)$;
suppose that this is the case.
Fix a non-zero $K$-invariant vector $x_3 \in \pi_3$,
which is then unique up to a scalar.
We recall, for later use, the following formula for the normalized
Hecke eigenvalues
of $x_3$:
\begin{equation}\label{eq:12}
  \lambda_{s,m} =
  \sum_
  {
    \substack
    {
      i, j \in \mathbb{Z}_{\geq 0} \\
      i + j = m
    }
  }
  \alpha^i \beta^j
  = \begin{cases}
    \frac{\alpha^{m+1} - \beta^{m+1}}{\alpha - \beta }
    & m \geq 0 \\
    0 &  m < 0
  \end{cases}
  \quad \text{ with }
  \alpha = p^{s-1/2}, \beta = p^{1/2-s}.
\end{equation}

Let $\pi$ be a representation of $G$
satisfying Condition \ref{cond:temperedness}.
Let $x \in \pi$ be a newvector,
i.e., a nonzero vector on the
unique line of $K_0(\mathfrak{p}^{a(\pi)})$-invariant
vectors in $\pi$.
Fix arbitrary $G$-invariant inner products
$\langle , \rangle$
on $\pi$ and $\pi_3$.
It follows from \cite[Lemma 2.1]{MR2449948}
that what we will call
the \emph{local Ichino integral}
\begin{equation}\label{trip-prod-defn}
  I(s)
  =
  I(s;\pi)
  =
  \int\limits_{Z \backslash G}
  \left(
    \frac
    {
      \langle g  x,x \rangle
    }{
      \langle x,x \rangle
    }
  \right)^2
  \frac{
    \langle g  x_3, x_3 \rangle
  }
  {
    \langle x_3, x_3 \rangle
  }
  \, d g
\end{equation}
converges absolutely
provided that either  $\Re(s) = 1/2$ or $s \in (2 \lambda(\pi), 1
- 2 \lambda(\pi))$;
we will see later that it extends to
a meromorphic function
of $s \in \mathbb{C}$.
Note that $I(s)$ depends only upon $\pi$, $s$,
and our normalization of measures,
and not upon the precise choice of $x$, $x_3$,
or the inner products $\langle , \rangle$
on $\pi,\pi_3$.
While not immediately obvious, it can be shown
(using~\eqref{mvgeneral} below, for instance) that the right
hand side of~\eqref{trip-prod-defn}  is nonnegative.

It will be convenient
to work with the normalized quantity
\[
\Icomplete(s) = I^*(s;\pi)=
  \left(
    \frac{ L (\pi  \times \pi  \times \pi_3, \onehalf ) \zeta _p
      (2)^2}{
      L (\ad \pi_3, 1 )
      L (\ad \pi , 1 ) ^2 }
  \right) ^{-1}
  I(s).
\]
We note that
$L(\pi \times \pi \times \pi_3,1/2)
= L(\pi \times \pi, s) L(\pi \times \pi, 1-s)$
and $L(\ad \pi_3, 1)^{-1}
= (1 - p^{2s-2}) (1 - p^{-1}) (1 - p^{-2s}) $.
\subsection{Statement of results}
\label{sec:local-statement-result}
Let $\pi$ be a representation of $G$
satisfying Condition \ref{cond:temperedness},
and let $s \in \mathbb{C}$.
Our main local result is an explicit formula
for the normalized local Ichino integral
$I^* = I^*(s) = I^*(s;\pi)$;
as a consequence, we deduce optimal bounds for the latter.
The proofs will occupy the remaining subsections of Section
\ref{sec:local-calculations}.
We will use the notation
\[
n = a(\pi),
\quad
N = a(\pi \times \pi),
\quad
n' = n - \frac{N}{2}.
\]
Proposition~\ref{propnN} implies
that $n'$ is an integer satisfying $\frac{N}2-1 = n' = \frac{n-1}{2}$ if $n$ is odd and
$\frac{N}2 \le n' \le n$ if $n$ is even.

When $n \in \{0,1\}$, the value of $\Icomplete$ is
already known
(see
\cite[Theorem 1.2]{MR2585578}
and
\cite[Lemma 4.2]{PDN-HQUE-LEVEL}):
\begin{theorem}
  \label{t:oldeasy}
  Suppose that $n=0$ or $n=1$. Then $\Icomplete = p^{-n}.$
\end{theorem}

We turn to the case $n \geq 2$.
Our formulas will depend upon the classification
of $\pi$
recalled in Section \ref{sec:class-repr}
and
the notation $\lambda_{s,m}$ introduced in \eqref{eq:12}.

\begin{theorem}\label{t:mainlocal}
  Suppose that $n \ge 2$. Then $\Icomplete = p^{-n} \cdot L(\ad  \pi\subscriptp ,1)^2 \cdot Q_{\pi, p}(s)^2$ with $$Q_{\pi, p}(s) = \begin{cases} \lambda_{s,n'} - p^{-1}\lambda_{s,n'-2} & \text{ for Type 1,}\\ \lambda_{s,n'} - p^{-\onehalfbig}\lambda_{s,n'-1}  & \text{ for Type 2,} \\    \lambda_{s,n'} - 2p^{-\onehalfbig}\lambda_{s,n'-1} + p^{-1}\lambda_{s,n'-2}  & \text{ for Type 4,}\\ \lambda_{s,n'} - p^{-\onehalfbig} (1 + p^{-1})\lambda_{s,n'-1} +  p^{-2}\lambda_{s,n'-2}  & \text{ for Type 5.}

  \end{cases}$$

  In the remaining case
  that $\pi$ is of Type 3, we have $N=0$, $n=n' = 2a(\beta) \in \{2,4,6\}$  and
  $$Q_{\pi, p}(s) = \begin{cases} \lambda_{s,2} -  p^{-\onehalfbig}\beta_{s_0}\lambda_{s,1} + p^{-1}(2\beta_{s_0} -2  - p^{-1}), &  p \text{ odd},
    \\ \lambda_{s,n} - p^{-\onehalfbig}\beta_{s_0}\lambda_{s,n-1} + 2p^{-1}(\beta_{s_0} -1)\lambda_{s,n-2} -  p^{-3/2}\beta_{s_0}\lambda_{s,n-3} + p^{-2} \lambda_{s,n-4}, & p \text{ even}. \end{cases}$$

\end{theorem}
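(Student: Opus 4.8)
The plan is to reduce the theorem to one explicit computation --- that of the local Rankin--Selberg integral $J\subscriptp(s)$ --- and then to repackage. By Proposition~\ref{lemmamvext} one has $I\subscriptp = (1-p^{-1})^{-1}J\subscriptp(s)J\subscriptp(1-s)$, and by the $\GL(2)\times\GL(2)$ local functional equation (Proposition~\ref{p:localfe}, i.e.~\eqref{eq:14}) the normalized integral $\Jfcomplete(s) = \zeta_p(2s)L(\pi\subscriptp\times\pi\subscriptp,s)^{-1}J\subscriptp(s)$ is an essentially palindromic Laurent polynomial in $p^{s}$ obeying $\Jfcomplete(s) = p^{N(s-1/2)}\Jfcomplete(1-s)$, where the even integer $N$ is furnished by Proposition~\ref{propnN}. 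Unwinding the definition of $\Icomplete$, the $L$- and $\zeta$-factors cancel cleanly: one finds $\Icomplete = (1+p^{-1})^2 L(\ad\pi\subscriptp,1)^2\,\Jfcomplete(s)\Jfcomplete(1-s)$, so that the asserted identity amounts to the statement that $p^{-Ns/2}\Jfcomplete(s)$ --- which by the functional equation is a \emph{symmetric} Laurent polynomial in $p^{s}$ --- agrees, up to an explicit constant and up to re-expansion in the basis $\{\lambda_{s,m}\}$ of symmetric Laurent polynomials, with $Q_{\pi,p}(s)$; this last step is forced, since each $\lambda_{s,m}$ is itself invariant under $s\mapsto 1-s$. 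Thus everything reduces to computing $J\subscriptp(s)$ for $\pi\subscriptp$ of Types 1--5 with $n\ge 2$; the cases $n\le 1$ are Theorem~\ref{t:oldeasy}.

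To compute $J\subscriptp(s)$ I would begin from the Bruhat-cell expression~\eqref{RS-integral-defn3}, reducing everything to the values $W(a(y)wn(x))$ of the Whittaker newform on the big cell. For the supercuspidal Types 1 and 2 one has the clean normalization $W(a(y)) = \mathbf{1}_{\OF^\times}(y)$, and the $\GL(2)$ local functional equation yields the Fourier expansion~\eqref{eq:15}. Squaring it, integrating over $y\in F^\times$ (where orthogonality of characters of $\OF^\times$ collapses the double sum to a single sum over characters $\mu$ of a fixed conductor, weighted by $C(\pi\subscriptp\mu)^{s-1}$), and integrating over $x$ against $\max(1,|x|)^{-2s}$, leads to the identity~\eqref{eq:3}; here the Gauss-sum integrals $\int_F |G(x,\mu)|^2\max(1,|x|)^{-2s}\,dx$ are evaluated using that $G(x,\mu)$ is supported on $|x| = C(\mu)$ with $|G(x,\mu)|^2$ of size $C(\mu)^{-1}$ for ramified $\mu$, and is the elementary unramified Gauss sum on $|x|\le p$ for trivial $\mu$. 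Summing the resulting geometric series in the ramification exponent of $\mu$ produces a formula of the shape~\eqref{eq:5}. For Types 3, 4 and 5 --- ramified twists of a spherical, a ramified principal series, or a Steinberg representation --- I would run the parallel computation from the explicit classical expression for the Whittaker newform (the ``direct Iwasawa'' evaluation alluded to in Remark~\ref{rmk:bulldoze}): writing $\pi\subscriptp = \beta\mu_0\boxplus\beta^{-1}\mu_0$ or $\pi\subscriptp = \beta\mu_0\St_{\GL(2)}$ and tracking the ramified character $\beta$ through the big cell --- noting $\det(a(y)wn(x)) = y$, so $\beta$ enters only through $|y|$ and interacts transparently with the weight $|y|^{s-1}$ --- reduces these to the conductors of $\beta\mu$ and of $\beta^2$.

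The one genuinely delicate ingredient, and the feature distinguishing the five types, is the behaviour of the conductor $C(\pi\subscriptp\mu)$ as $\mu$ ranges over the characters of $F^\times$. For $C(\mu)^2\ne C(\pi\subscriptp)$ one has the clean identity $C(\pi\subscriptp\mu) = \max\bigl(C(\pi\subscriptp),C(\mu)^2\bigr)$, which handles all $\mu$ except the \emph{middle} characters with $a(\mu) = n/2$ (these occur only when $n$ is even). For these the conductor can drop, and exactly how much, and for how many $\mu$, is type-specific: for Type 1 one uses $\pi\subscriptp\mu = \rho(E/F,\xi\cdot(\mu\circ\Norm_{E/F}))$ with $E/F$ unramified, so $\mu\circ\Norm_{E/F}$ has conductor $a(\mu)$ and the conductor of $\pi\subscriptp\mu$ drops precisely for the middle $\mu$ whose leading coefficient cancels that of $\xi$; for Type 2 the defining property $\pi\subscriptp\not\cong\pi\subscriptp\eta\subscriptp$ forces $C(\pi\subscriptp\mu) = C(\pi\subscriptp)$ for \emph{every} middle $\mu$ (no dropping), which is why $Q_{\pi,p}(s)$ is then the short two-term expression; Types 4 and 5 reduce to the well-understood conductors of $\beta\mu\boxplus\beta^{-1}\mu$ and of $\beta\mu\St_{\GL(2)}$; and Type 3 is the extreme $N=0$ case --- the adjoint lift of $\pi\subscriptp$ is unramified because $\beta^2=1$ --- where the sum over middle characters is longest and where, when $p=2$, the possibilities $a(\beta)\in\{2,3\}$ (Remark~\ref{rem:ram}) force the extra terms in the stated $p=2$ formula.

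I expect this conductor-of-twists bookkeeping, carried out type by type and with $p=2$ treated separately throughout (dyadic Gauss sums, non-dihedral supercuspidals, $a(\beta)\in\{2,3\}$), to be the main obstacle; once it is in hand the rest is the summation of explicit finite geometric series, with the functional equation~\eqref{eq:14} serving as a running consistency check --- in particular it recovers the coefficients of the negative powers of $p^{s}$ from those of the nonnegative powers, which the Bruhat-cell computation produces directly (cf.\ the left $N$-equivariance of $W$, which annihilates all positive powers of $p^{s}$ arising from the range $|x|\ge C(\pi\subscriptp)^{1/2}$).
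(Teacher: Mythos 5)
Your top-level reduction is correct and matches the paper: $\Icomplete$ is reduced to $J\subscriptp(s)$ via Proposition~\ref{lemmamvext}, and the functional equation of Proposition~\ref{p:localfe} makes $\Jcomplete(s)=\sum T_m p^{ms}$ an essentially palindromic Laurent polynomial whose re-expansion in the $\lambda_{s,m}$ is then automatic. But your plan for computing $J\subscriptp(s)$ is the ``bulldozer'' route of Remark~\ref{rmk:bulldoze}: expand $W(a(y)wn(x))$ via~\eqref{eq:15}, pass to~\eqref{eq:3}, and evaluate the sum over characters $\mu$ by tracking $C(\pi\subscriptp\mu)$, with the middle characters $a(\mu)=n/2$ as the delicate case. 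That is \emph{not} the paper's proof, and the paper explicitly flags that route as ``more difficult to implement when $\pi$ is a ramified twist of a principal series or Steinberg representation'' --- exactly Types 3--5, for which you propose yet a third method (explicit Whittaker formulas in the Iwasawa decomposition). You have in effect volunteered to do the hard version of the computation for all five types.

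The idea you are missing is that the paper never computes any conductor of a twist, nor any Gauss sum, nor $R_0$. Lemma~\ref{keywhittakersupportlemma} shows that when $m>0$, the coefficient $R_m$ of $p^{ms}$ in the Bruhat-cell integral~\eqref{RS-integral-defn3} receives a contribution from a \emph{single coset} $|y|=p^n$, $\max(1,|x|)=p^{(n-m)/2}$, on which the $y$-integral is literally $\|W\|^2=1$ by $\GL_2(F)$-invariance of the Whittaker pairing; this gives Proposition~\ref{propRm} with no case analysis whatsoever. The coefficients $T_m$ for $m\le 0$ are then \emph{derived}, not checked, from the functional equation~\eqref{keyequation} together with the normalization $\sum T_m p^m = \zeta_p(2)/L(\pi\subscriptp\times\pi\subscriptp,1)$ (i.e.\ $J\subscriptp(1)=1$), the latter being essential when $N=0$ (Type 3), where the functional equation alone leaves $T_0$ undetermined. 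By contrast you describe the functional equation merely as a ``running consistency check'' and claim the Bruhat cell ``produces the nonnegative powers directly''; in fact $R_0$ is not produced directly (the support lemma only bites for $m>0$), and your assertion that for Type~2 the defining property $\pi\subscriptp\not\cong\pi\subscriptp\eta\subscriptp$ forces $C(\pi\subscriptp\mu)=C(\pi\subscriptp)$ for every middle $\mu$ is unjustified and, as far as I can tell, false in general. In short: the strategy you outline could probably be pushed through for supercuspidals, at the cost of precisely the conductor bookkeeping you identify as ``the main obstacle,'' but the actual proof sidesteps all of it, treats all five types uniformly, and gets the negative-power side of $\Jcomplete$ for free from the functional equation.
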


\begin{corollary}[Local Lindel\"{o}f
  hypothesis]\label{cor:something-about-implied-bound-on-Ip}
  Let $\theta = |\Re(s-1/2)|$.
  Then
  $\Icomplete < 10^5 p^{-n} \tau(p^{n'})^2 p^{2 \theta n'}$.
\end{corollary}
\begin{proof}
  The case $n \in \{0,1\}$
  follows easily from Theorem~\ref{t:oldeasy},
  so suppose that $n \ge 2$. The value of $L(\ad  \pi\subscriptp ,1)$ can be read off from the formula for $L(\pi\subscriptp \times \pi\subscriptp,s)^{-1}\zeta_p(2s) = (1+p^{-s})^{-1} L(\ad  \pi\subscriptp ,s)^{-1}$ given in Table \ref{table1} below. We see that $L(\ad  \pi\subscriptp ,1) \le 30 <10^{\frac32}$ in every case.  The formulas for $Q_{\pi, p}(s)$ provided above imply the bound $|Q_{\pi, p}(s)| \le 10\tau(p^{n'}) p^{ \theta n'}$. The result now follows from Theorem \ref{t:mainlocal},
  noting that $n \geq 2$ implies $n' \geq 1$.
\end{proof}

\subsection{An identity of local integrals}\label{sec:2.2}
In this section we apply a lemma of Michel--Venkatesh
to establish the meromorphic
continuation
of the local Ichino integral $I(s) = I(s;\pi)$ defined in
Section \ref{sec:defin-ichino-integr}
and to reduce its study
to that of a Rankin--Selberg integral
involving the Whittaker newform of $\pi$.

Let $\pi$ be a generic
irreducible admissible unitarizable
representation of $G$ with trivial central character,
realized in its $\psi$-Whittaker model:
$\pi = \mathcal{W}(\pi,\psi)$.
By \cite[Lemma 2.19.1]{MR0401654},
the formula
\begin{equation}
  \langle W_1, W_2 \rangle = \int_{F^\times}
  W_1(a(y)) \overline{W_2(a(y))} \,  d^\times y
  \quad (W_1, W_2 \in \pi)
\end{equation}
defines
a $G$-invariant hermitian pairing on $\pi$.
\begin{definition}\label{defn:normalized-W-newform}
  The \emph{normalized Whittaker newform}
  $W \in \pi$
  is
  the unique vector invariant under $K_0(\p^{a(\pi)})$
  that satisfies $\langle W, W\rangle = 1$ and $W(1)>0$.
\end{definition}
\begin{remark}
  One can check that $W(1) = 1$ whenever $a(\pi) \geq 2$.
\end{remark}


Let $s \in \mathbb{C}$ be a complex parameter.
We realize $\pi_3 = |.|^{s-1/2} \boxplus |.|^{1/2-s}$
in its induced model,
and let $f_s \in \pi_3$ denote the unique $K$-invariant vector
that satisfies $f_s(1) = 1$.
Define the \emph{local Rankin--Selberg integral}
\begin{equation}\label{RS-integral-defn}
  J\subscriptp(s) =  \int\limits_{N Z \backslash G}
  W(g) W(a(-1)g) f_s(g)\, d g,
\end{equation}
where
$W \in \pi$ is the normalized Whittaker newform.
It is well-known that the RHS converges
absolutely in some nonempty vertical strip
and extends to a meromorphic function of $s$ on the
complex plane (see \cite{Ja72}).
Using the identity $W(a(-1)g) = \overline{W(g)}$ and the Iwasawa
decomposition,
we can rewrite this definition as
\begin{equation}\label{RS-integral-defn2}
  J\subscriptp(s) =  \int\limits_{ k \in K}  \int\limits_{y \in F^\times} |W|^2(a(y)k) |y|^{s-1} \, d^\times y \,  dk.
\end{equation} or alternatively, using the Bruhat decomposition (see~\eqref{e:bruhatmeasure}), as
\begin{equation}\label{RS-integral-defn3}
  J\subscriptp(s) =
  \frac{\zeta_p(2)}{\zeta_p(1)}\int_{x \in F}
  \max(1,|x|)^{-2s}
  \int_{y \in F^\times }
  |W|^2(a(y) w n(x))
  |y|^{s-1}\, d^\times y \, dx.
\end{equation}

The following important result is a
consequence
of Lemma
3.4.2 in \cite{michel-2009}.

\begin{proposition}\label{lemmamvext}
  Suppose that $\pi$ satisfies Condition \ref{cond:temperedness}.
  The integral $I(s)$,
  defined initially
  for $\Re(s) = 1/2$ or $s \in (2 \lambda(\pi), 1 - 2
  \lambda(\pi))$,
  extends to a meromorphic function of $s$ on the entire complex
  plane.
  We have an identity of meromorphic functions
  \begin{equation}\label{mvgeneral}I\subscriptp(s)
    = (1-p^{-1})^{-1}J\subscriptp(s)J\subscriptp(1-s).\end{equation}
\end{proposition}

\begin{proof}
  Denote by $\mathcal{D}= \{s \in \mathbb{C} :
  \Re(s) = 1/2\} \cup (2 \lambda(\pi), 1 - 2 \lambda(\pi))$
  the cross on which $I(s)$ was defined,
  and let $s \in \mathcal{D}$.
  Then $\pi_3$ is irreducible and unitarizable.
  We normalize the (unique up to scaling)
  $G$-invariant hermitian pairing $\langle , \rangle$
  on $\pi_3$
  so that $\langle f_s, f_s \rangle = 1$.\footnote{In the tempered case $\Re(s) = 1/2$,
    we have explicitly
    \[
    \langle f, f' \rangle
    = \int _{k \in K} f(k) \overline{f'(k)} \, d k
    \quad (f, f' \in \pi_3).
    \]
    When $s \in (0,1)$,
    the formula for the pairing is slightly more complicated.}
  With this normalization, the definition \eqref{trip-prod-defn}
  reads
  \begin{equation}\label{eq:13}
    I(s)
    =
    \int _{Z \backslash G}
    \langle g W, W \rangle^2
    \langle g f_s, f_s \rangle \, d g.
  \end{equation}
  This integral converges absolutely and locally uniformly
  on $\mathcal{D}$.

  We observe that $\langle g f_s, f_s \rangle$
  extends to an entire function of $s$,
  and in fact a polynomial function of $p^{\pm s}$;
  explicitly,
  $$\langle k_1 a(\varpi^m) k_2 f_s, f_s \rangle
  = p^{-m/2} (1+ p^{-1})^{-1} (\lambda_{s,m} - p^{-1}\lambda_{s,m-2})$$
  with $\lambda_{s,m}$ as in \eqref{eq:12}
  for all $k_1, k_2 \in K$, $m \ge 1$.
  Moreover, we have the majorization
  $|\langle g f_{s}, f_{s} \rangle|
  \leq \langle g f_{\sigma}, f_{\sigma}  \rangle \in
  \mathbb{R}_{\geq 0}$
  with $\sigma = \Re(s)$.
  Consequently, the integral  \eqref{eq:13}
  converges normally
  and defines a holomorphic function
  on the strip
  $\mathcal{D} ' = \{s \in \mathbb{C} : \Re(s) \in (2
  \lambda(\pi),
  1 - 2 \lambda(\pi)) \}$.

  The relation \eqref{mvgeneral} on the
  line $\Re(s) = 1/2$
  follows from Lemma
  3.4.2 in \cite{michel-2009}
  upon noting
  that
  $J\subscriptp(s) = \overline{J\subscriptp(1-s)}$
  whenever $\Re(s) = \onehalfbig$.
  Since both sides
  of \eqref{mvgeneral}
  vary analytically with $s$ on the strip $\mathcal{D}'$, we obtain at once the
  meromorphic
  continuation of $J(s)$ to the complex plane and the general
  case
  of the identity \eqref{eq:13}.
\end{proof}

Proposition \ref{lemmamvext}
is significant for our purposes
because it reduces the evaluation of
the integral $I\subscriptp(s)$,
which appears in Ichino's formula,
to that of the simpler integral $J\subscriptp(s)$.

\subsection{The local functional equation}\label{s:localfe}
Let $\pi = \mathcal{W}(\pi,\psi)$
be a generic irreducible admissible
unitarizable representation of $G$ with trivial central character,
let $W \in \pi$ be the normalized Whittaker newform,
and let $J(s)$ be the local Rankin--Selberg integral.
The main difficulty in computing
$J\subscriptp(s)$, and hence $I\subscriptp(s)$,
is that $W(g)$ has no simple
formula when
$a(\pi) \ge 2$.
In Section \ref{sec:proofs}, we will split the
integral~\eqref{RS-integral-defn3} defining $J(s)$ into several
pieces. Initially, we will be able to evaluate at least half of
these pieces. The key tool that will enable us to compute the
remaining pieces is the local functional equation for $\GL(2)
\times \GL(2)$, which we now recall
in a specialized form.
It is convenient to define
the \emph{normalized local Rankin--Selberg integral}
$$\Jcomplete(s) = \frac{J\subscriptp(s)
  \zeta_p(2s)}{L(\pi\subscriptp \times \pi\subscriptp, s)},$$
and to introduce the shorthand
$C\subscriptp = C(\pi \times\pi)$.

\begin{proposition}[Local functional equation for $\GL(2) \times
  \GL(2)$]\label{p:localfe}
  $\Jcomplete(s)$ extends to a polynomial
  function of $p^{\pm s}$ that satisfies the functional equation $\Jcomplete(s) = C\subscriptp^{s - \onehalfbig} \Jcomplete(1-s).$
\end{proposition}

\begin{proof}This follows from (1.1.5) of~\cite{MR533066} by taking the Schwartz function $\Phi$ to be the characteristic function of $\OF \times \OF$. We have used here that the epsilon factor $\eps(s, \pi\subscriptp \times \pi\subscriptp, \psi)$ equals $C\subscriptp^{ \onehalfbig-s}$. This follows from the fact that the local root number of $\pi\subscriptp\times \pi\subscriptp$ is equal to +1; see the proof of Prop. 2.1 of~\cite{prasadram}.
\end{proof}

Suppose that $\pi$ satisfies Condition \ref{cond:temperedness}. From Proposition~\ref{lemmamvext} and the definitions of $I^*(s)$ and $J^*(s)$, we readily derive the formula \begin{equation}\label{deftilip}\Icomplete(s)
  =   (1+p^{-1})^2 L(\ad  \pi\subscriptp,
  1)^2\Jcomplete(s)\Jcomplete(1-s).\end{equation}   By Proposition~\ref{p:localfe}, $\Jcomplete(s)$ is an entire function of $s$. It follows that $\Icomplete(s)$ is also entire as a function of $s$. By contrast, $I(s)$ may have poles.
Using soft analytic techniques, we deduce
from Proposition~\ref{p:localfe} the \emph{local convexity
  bound}
described in the introduction.

\begin{corollary}[Local convexity bound]\label{c:loconv} For $0
  \le \Re(s) \le 1$,
  we have $\Jcomplete(s) \ll C\subscriptp^{-1/2 + \Re(s)/2}$ and $\Icomplete(s) \ll C\subscriptp^{-1/2}$ with absolute implied constants.
\end{corollary}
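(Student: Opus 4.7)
The plan is to follow exactly the ``trivial bound on the edge of absolute convergence, plus functional equation, plus Phragm\'en--Lindel\"of'' template that is recalled in the introduction to justify the name ``local convexity bound''.

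First I would establish the trivial bound $\Jcomplete(s) \ll 1$ on the line $\Re(s) = 1$. Starting from the Iwasawa-coordinate expression~\eqref{RS-integral-defn2}, on $\Re(s) = 1$ the factor $|y|^{s-1}$ is unimodular, so
\[
|J\subscriptp(s)| \le \int_{\GL_2(\OF)} \int_{F^\times} |W(a(y)k)|^2 \, d^\times y \, dk = \int_{\GL_2(\OF)} \langle \pi(k) W, \pi(k) W\rangle \, dk = 1,
\]
using the $\GL_2(F)$-invariance of the pairing and the $L^2$-normalization $\langle W, W\rangle = 1$. To pass from $J\subscriptp$ to $\Jcomplete$ I need a uniform bound on $|\zeta_p(2s)/L(\pi\subscriptp\times\pi\subscriptp, s)|$ for $\Re(s) = 1$. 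The factor $\zeta_p(2s) = (1-p^{-2s})^{-1}$ is trivially bounded by $(1-p^{-2})^{-1} \le 4/3$. For the denominator, $L(\pi\subscriptp\times\pi\subscriptp,s)^{-1}$ is a polynomial in $p^{-s}$ whose roots are controlled by the Satake/Langlands parameters of $\pi$; since $\pi$ is unitary generic with trivial central character, the classical bound towards Ramanujan (recalled just before Theorem~\ref{t:mainlocal}, including the hypothesis $|\Re(s_0)|\le 1/4$ in the complementary case of Type 3) gives $|L(\pi\subscriptp\times\pi\subscriptp, s)^{-1}|$ bounded by an absolute constant on $\Re(s) = 1$. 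This yields $\Jcomplete(s) \ll 1$ on $\Re(s) = 1$.

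Next, the local functional equation $\Jcomplete(s) = C\subscriptp^{s-1/2}\Jcomplete(1-s)$ from Proposition~\ref{p:localfe}, combined with the bound just established applied at $1-s$, gives $|\Jcomplete(s)| \ll C\subscriptp^{-1/2}$ on the line $\Re(s) = 0$. Now set $g(s) = C\subscriptp^{(1-s)/2}\Jcomplete(s)$, a holomorphic function on $\C$ (since $\Jcomplete$ is a Laurent polynomial in $p^s$). Then $|g(s)| \ll 1$ on both boundaries $\Re(s) \in \{0,1\}$ of the vertical strip, and $g$ is of finite order in the strip (in fact bounded on every vertical line, by periodicity of $\Jcomplete$ under $s \mapsto s + 2\pi i/\log p$). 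An application of the Phragm\'en--Lindel\"of principle in the strip gives $|g(s)|\ll 1$ throughout $0\le \Re(s)\le 1$, i.e.\ $\Jcomplete(s) \ll C\subscriptp^{-1/2 + \Re(s)/2}$, as desired.

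Finally, to deduce the bound $\Icomplete \ll C\subscriptp^{-1/2}$, I would combine Proposition~\ref{lemmamvext} with the explicit normalization of $\Icomplete$ given in Section~\ref{sec:local-statement-result}. The factors $\zeta_p(2s),\zeta_p(2-2s)$ introduced when converting $J\subscriptp(s)J\subscriptp(1-s)$ to $\Jcomplete(s)\Jcomplete(1-s)$ cancel against $(1-p^{2s-2})(1-p^{-2s})$ in the denominator of the normalization, collapsing everything to
\[
\Icomplete = (1+p^{-1})^2 \, L(\ad\pi\subscriptp, 1)^2 \, \Jcomplete(s)\Jcomplete(1-s).
\]
Under the hypothesis on $|\Re(s_0)|$ in Type 3, $L(\ad\pi\subscriptp,1)$ is uniformly bounded by an absolute constant (its local Euler factors have poles only at $\Re(s) \le 1/2$), so applying the convexity bound for $\Jcomplete$ at $s$ and at $1-s$ gives $|\Icomplete| \ll C\subscriptp^{-1/2+\Re(s)/2} \cdot C\subscriptp^{-1/2+(1-\Re(s))/2} = C\subscriptp^{-1/2}$.

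The main obstacle is securing the \emph{absolute} (i.e.\ $\pi$-uniform) constants in the bounds on $L(\pi\subscriptp\times\pi\subscriptp,s)^{-1}$ on $\Re(s) = 1$ and on $L(\ad\pi\subscriptp,1)$; both reduce to controlling the Satake parameters of $\pi$ away from the critical radius of convergence, which is precisely what the classical Kim--Shahidi-type bound provides in the only non-tempered case (Type 3).
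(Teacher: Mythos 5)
Your proposal is correct and follows exactly the paper's own argument: the trivial bound $\Jcomplete(s)\ll 1$ on $\Re(s)=1$ from \eqref{RS-integral-defn2} and the $L^2$-normalization of $W$, transfer to $\Re(s)=0$ via the functional equation of Proposition~\ref{p:localfe}, interpolation by Phragm\'en--Lindel\"of, and then the identity \eqref{deftilip} to deduce the bound on $\Icomplete$. The extra details you supply (uniform control of $\zeta_p(2s)/L(\pi\times\pi,s)$ on $\Re(s)=1$ and of $L(\ad\pi,1)$ via the classical bound $|\Re(s_0)|\le 1/4$ in the Type 3 case) are exactly what the paper leaves implicit, and you have identified them correctly.
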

\begin{proof}
  By~\eqref{deftilip}, it suffices to prove the first
  part of the statement. Using~\eqref{RS-integral-defn2} and the
  fact that $W(g)$ is $L^2$-normalized, we get the trivial bound
  $\Jcomplete(s) \ll 1$ for $\Re(s) = 1$.
  We transfer this to the bound $\Jcomplete(s) \ll C\subscriptp^{-1/2}$ for
  $\Re(s)=0$ via Proposition~\ref{p:localfe}.
  We interpolate these two bounds by the
  Phragmen--Lindel\"{o}f theorem,
  which in this context is nothing more than the maximum modulus
  principle,
  to deduce that
  $\Jcomplete(s)
  \ll C\subscriptp^{-1/2 + \Re(s)/2}$
  for all $s$ with $0 \leq \Re(s) \leq 1$.

\end{proof}

\subsection{Properties of Whittaker functions}
\label{sec:supp-whitt-newf}
Let $\pi = \mathcal{W}(\pi,\psi)$ be a generic irreducible
admissible unitarizable
representation of $G$ with trivial central character,
and $W \in \pi$  its normalized Whittaker newform.
The purpose of this section is to establish the key properties
of $W$ that will be used in our proof of Theorem
\ref{t:mainlocal}.

\begin{lemma}[Invariance of inner product on Whittaker model]
  \label{lem:invariance-inner-product-W-model}
  For each $g_1, g_2 \in G$, one has
  $\int _{y \in F^\times } |W|^2( a(y) g_1 ) \, d^\times y
  = \int _{y \in F^\times } |W|^2( a(y) g_2 )
  \, d^\times y = 1$.
\end{lemma}
\begin{proof}
  Since  $G$ acts on $\mathcal{W}(\pi,\psi)$
  by right translation,
  the first identity amounts to the fact that
  integration along $A$
  defines a $G$-invariant hermitian pairing
  on $\mathcal{W}(\pi,\psi)$ (see the beginning of Section
  \ref{sec:2.2}).
  The second identity follows from our assumption that $W$ is $L^2$-normalized.
\end{proof}

\begin{lemma}[Support
  condition\footnote{
    The reader looking to understand how our arguments would apply to
    slightly more general vectors
    $W' \in \pi$ might complain that this condition is very
    particular to the newform.
    We refer to Remark \ref{rmk:bulldoze} for a sketch of an alternative,
    more robust argument that does not make use of this condition.}]\label{keywhittakersupportlemma}
  Write $n = a(\pi)$, and suppose that $n \geq 2$.
  If $|x|^2 < \max(p^n,|y|)$ and $W(a(y) w n(x)) \neq  0$,
  then $|y|
  = p^n$.
\end{lemma}
Before embarking on the proof of this lemma,
we must introduce some notation and recall
the local $\GL(2)$ functional equation.
Let $\mu$ be a character of the unit group
$\mathfrak{o}^\times$.
We extend $\mu$ to a (unitary) character of $F^\times$
(non-canonically) by setting $\mu(\varpi) = 1$,
and henceforth denote this extension also by $\mu$.
We may write the standard $\eps$-factor
for $\pi \mu$
in the form
$\eps(\pi \mu,s,\psi)
= \eps(\pi \mu) C(\pi \mu)^{1/2-s}$
for some $\eps(\pi \mu)  = \eps(\pi \mu, \psi, 1/2) \in \mathbb{C}^1$,
where $C(\pi \mu) = p^{a(\pi \mu)}$
is as in Section \ref{sec:cond-l-funct};
for notational simplicity,
we suppress the dependence of
$\eps(\pi \mu)$
on our fixed choice of uniformizer $\varpi$ and unramified
additive character
$\psi$.

With this notation, the local $\GL(2)$ functional equation
(see \cite{MR0401654})  asserts that for each vector $W' \in
\mathcal{W}(\pi\subscriptp,\psi)$, each character $\mu$ of
$\mathfrak{o}^\times$, and each
complex number $s \in \mathbb{C}$,
the local zeta
integral
\[
Z(W',\mu,s) = \int_{F^\times} W'(a(y)) \mu(y) |y|^s \, d^\times y
\]
satisfies
\begin{equation}\label{gl2-loc-func-eqn}
  \frac{Z(W',\mu^{-1},s)}{L(\pi\subscriptp \twist \mu^{-1},\onehalf+s)} = \eps(\pi\subscriptp \twist \mu) C(\pi\subscriptp \twist \mu)^s \frac{Z(w  W',\mu,-s)}{L(\pi\subscriptp \twist \mu,\onehalf-s)}.
\end{equation}

\begin{proof}[Proof of Lemma \ref{keywhittakersupportlemma}]
  Suppose that $|x|^2 < \max(p^n,|y|)$.
  If $|x|^2 \geq p^n$,
  then $|y| > |x|^2 \geq p^n$,
  hence
  \[
  \max \left(
    \left\lvert \frac{x}{y} \right\rvert,
    \left\lvert \frac{x^2}{y} \right\rvert,
    p^n
    \left\lvert \frac{1}{y} \right\rvert
  \right) < 1.
  \]
  It follows that for each
  unit $u \in \mathfrak{o}^\times$,
  the matrix
  \[
  (a(y)wn(x))^{-1} n(u\varpi^{-1}) (a(y)wn(x))
  =
  \begin{bmatrix}
    1 + \frac{x}{y} u \varpi^{-1} &   \frac{x^2}{y}  u \varpi^{-1} \\
    -\frac{1}{y} u \varpi^{-1} & 1 - \frac{x}{y} u \varpi^{-1}
  \end{bmatrix}
  \]
  belongs to $K_0(\p^n)$.  Therefore $W(a(y)wn(x)) = \psi(
  u\varpi^{-1})W(a(y)wn(x))$ for all $u \in \OF^\times$.
  Since $\psi$ has conductor $\OF$, we see that
  $W(a(y)wn(x)) = 0$.

  It remains to consider the case that $|x|^2 < p^n$.
  Let $W' = w n(x)  W$.
  We wish to show that $W'(a(y)) = 0$ unless $|y| = p^n$.
  By Fourier inversion on the unit group $\mathfrak{o}^\times$,
  it is equivalent
  to show
  that for each character $\mu$ of $\mathfrak{o}^\times$,
  the zeta integral $Z(W',\mu^{-1},s)$
  is a constant multiple
  of $p^{ns}$, where the constant
  is allowed to depend upon $\mu$ but not upon $s$.

  It is a standard fact (see \cite{MR0447121,Sch02}) that
  the map $F^\times \ni y \mapsto W(a(y))$, and hence
  also the map
  \begin{equation}\label{restriction-of-wWprime-to-diag}
    F^\times  \ni y \mapsto (w  W')(a(y)) = (n(x)  W)(a(y))
    = W(a(y) n(x)) = \psi(x y) W(a(y)),
  \end{equation}
  is supported on $\mathfrak{o}^\times$,
  so that $c_0(\mu) := Z(w  W', \mu,-s )$ is
  independent of $s$; it is here that we have used
  the assumption $n \geq 2$.
  Therefore the functional equation
  \eqref{gl2-loc-func-eqn} reads
  \[
  Z(W',\mu^{-1},s) =
  c_0(\mu) \eps(\pi\subscriptp \twist \mu) C(\pi\subscriptp \twist \mu)^s
  \frac{L(\pi\subscriptp \twist \mu^{-1},\onehalf+s)}{L(\pi\subscriptp \twist \mu,\onehalf-s)},
  \]
  and we reduce to showing that
  $c_0(\mu) \neq 0$ implies
  that
  $C(\pi\subscriptp \twist \mu) = p^n$ and
  $L(\pi\subscriptp \twist \mu,s) = L(\pi\subscriptp \twist \mu^{-1},s) = 1$.

  The right-$a(\mathfrak{o}^\times)$-invariance of $W$
  implies that \eqref{restriction-of-wWprime-to-diag} is invariant under
  $\mathfrak{o}^\times \cap (1 + x^{-1} \mathfrak{o})$,
  hence $c_0(\mu) = 0$ unless
  $C(\mu) \leq |x|$, in which case $C(\mu)^{2} \leq |x|^2 < p^n$
  and $C(\pi\subscriptp \twist \mu) =
  p^n$.
  If $\pi\subscriptp$ is of Type $1$ or Type $2$, we deduce
  immediately that
  $L(\pi\subscriptp \twist \mu,s) = L(\pi\subscriptp \twist \mu^{-1},s) = 1$;
  in the other cases this holds by inspection.
\end{proof}

\begin{remark} A slight modification of the above argument
  implies that under the hypotheses of Lemma
  \ref{keywhittakersupportlemma},
  we have
  \begin{equation}\label{eq:15new}
    W(a(y) w n(x))
    = \sum_{
      \substack{
        \mu \in \widehat{\OF^\times}  \\
        C(\pi\subscriptp \twist \mu) = |y|
      }
    }
    \mu(y) \eps(\pi\subscriptp \twist \mu)
    G(x,\mu),
  \end{equation}
  where $G(x,\mu) = \int_{u \in U} \psi(x u) \mu(u)$
  is a Gauss-Ramanujan sum. Note that the
  characters $\mu$ contributing nontrivially to
  \eqref{eq:15new} are those for which $G(x,\mu) \neq 0$,
  which implies that $C(\mu) \leq |x|$.

\end{remark}

\subsection{The proofs}\label{sec:proofs}
Our aim in this section is to prove Theorem~\ref{t:mainlocal};
along the way,
we will also establish Proposition~\ref{propnN}.
Let $\pi$ satisfy Condition \ref{cond:temperedness}.
Recall the notation
$n = a(\pi)$ and $N = a(\pi \times \pi)$.
Suppose that $n \geq 2$.
By \eqref{mvgeneral},
the calculation of $I^*(s)$
reduces to that of $\Jcomplete(s)$.
Let
$T_m$ be the coefficient of $p^{m s}$ therein:
\begin{equation}\label{deftm}\Jcomplete(s) = \sum_{m \in \Z}T_m
  p^{ms}. \end{equation}
Recalling that $J^*(s)$ is a polynomial
in $p^{\pm s}$ and applying its functional equation
$J^*(s) = (p^N)^{s-1/2} J^*(1-s)$
(Proposition~\ref{p:localfe}),
we see that $T_m = 0$  for
almost all $m$
and
\begin{equation}\label{keyequation}
  T_{-m+N} = p^{m-\frac{N}{2}}T_{m}.
\end{equation}
Setting $s=1$ in~\eqref{deftm} and using the identity $J\subscriptp(1)=1$, we obtain
\begin{equation}\label{sumtm} \sum_m T_m p^m = \frac{\zeta_p(2)}{L(\pi\subscriptp \times \pi\subscriptp,1)}.
\end{equation}

Closely related to $T_m$ are the quantities $R_m$ defined
by \begin{equation}\label{defrm}J\subscriptp(s) = \sum_{m \in
    \Z}R_m p^{ms}. \end{equation} A linear  relation between the
sequences $T_m$ and $R_m$ follows immediately from the
definition $$\Jcomplete(s) = \frac{J\subscriptp(s)
  \zeta_p(2s)}{L(\pi\subscriptp \times \pi\subscriptp,s)}.$$ For
convenience,
we explicate this relation case-by-case in Table \ref{table1}.

\begin{table}
  \centering
  \caption{Relation between $T_m$ and $R_m$}\label{table1}
  \begin{tabular}{ccc}
    \text{Representation}&$L(\pi\subscriptp \times \pi\subscriptp,s)^{-1}\zeta_p(2s)$&$T_m$ \text{ in terms of } $R_m$ \\
    \hline
    \noalign{\medskip} $\mathrm{Type }  \ 1$& $1$    &   $R_m$ \\
    \noalign{\medskip} $\mathrm{Type } \  2$& $\frac1{1+p^{-s}}$   &   $\sum_{r=0}^\infty (-1)^r R_{m+r}$ \\
    \noalign{\medskip} $\mathrm{Type } \  3$& $\frac{(1-p^{-s})(1-p^{2s_0-s})(1-p^{-2s_0-s})}{1+p^{-s}} $  &    $R_m - \beta_{s_0}R_{m+1} - R_{m+2} + 2\beta_{s_0}\sum_{r=2}^\infty (-1)^r R_{m+r}$ \\
    \noalign{\medskip}  $\mathrm{Type } \  4$&  $ \frac{1-p^{-s}}{1+p^{-s}}$   & $R_m + 2 \sum_{r=1}^\infty (-1)^r R_{m+r}$ \\ \noalign{\medskip}  $\mathrm{Type } \  5$& $\frac{1-p^{-s-1}}{1+p^{-s}}$   & $R_m + (1+p^{-1}) \sum_{r=1}^\infty (-1)^r R_{m+r}$ \\ \noalign{\medskip}
  \end{tabular}

\end{table}

Let us now explain our strategy for computing $\Jcomplete(s)$. In view of~\eqref{deftm},~\eqref{keyequation}  and~\eqref{sumtm}, it suffices to compute $T_m$ for positive $m$. Using Table \ref{table1}, we reduce further to computing $R_m$ for positive $m$.

The definition \eqref{RS-integral-defn3}
of $J(s)$ implies that
\[
R_m
=
\frac{\zeta_p(2)}{\zeta_p(1)}
\mathop{
  \int _{x \in F}
  \int _{y \in F^\times }
} _{
  |y|/\max(1,|x|)^2 = p^m
}
|W|^2(a(y) w n(x))
|y|^{-1} \, d^\times y \, d x.
\]
Let $x, y$ be as in the integrand above,
and suppose that $W(a(y) w n(x)) \neq 0$.
Since $m > 0$,
we have $|x|^2 < |y|$.
By the support condition on $W$ (Lemma \ref{keywhittakersupportlemma}),
we deduce that $|y| = p^n$.
Therefore
\[
R_m
=
\frac{\zeta_p(2)}{\zeta_p(1)}
p^{-n}
\mathop{
  \int _{x \in F}
} _{
  \max(1,|x|)^2 = p^{n-m}
}
\int _{y \in F^\times }
|W|^2(a(y) w n(x))
\, d^\times y \, d x.
\]
By the invariance
of the inner product on $\mathcal{W}(\pi,\psi)$
and our assumption that $W$ is $L^2$-normalized
(Lemma \ref{lem:invariance-inner-product-W-model}),
we deduce that the integral over $y$ is identically $1$,
and in particular, independent of $x$.
We summarize thusly:



\begin{proposition}\label{propRm}
  Let $m$ be a positive integer.
  Then
  \[
  R_m = \frac{\zeta_p(2)}{\zeta_p(1)}
  p^{-n}
  \vol (
  \{x \in F : \max(1,|x|)^2 = p^{n - m}\},
  d x
  ).
  \]
  Explicitly,
  \begin{enumerate} \item
        $R_m = 0$
        if either
        \begin{itemize}
        \item $m>n$, or
        \item $1 \le m < n$ and $m-n$ is odd.
        \end{itemize}
      \item $R_n = \frac{p^{-n}}{1 + p^{-1}}$
      \item  $R_m =  p^{\frac{-n-m}{2}}\frac{1 - p^{-1}}{ 1 + p^{-1}}$  if $1 \le m < n$ and $m-n$ is  even.

      \end{enumerate}

    \end{proposition}

    In order to prove Theorem~\ref{t:mainlocal}, it suffices to evaluate $T_m$ for each
    $m$ and then use~\eqref{deftilip}
    and~\eqref{deftm}. From~\eqref{keyequation} and Proposition~\ref{propRm} we
    know that $T_m = 0$ if $m > n$ or $m < N-n$. For the remaining
    values of $m$, the evaluation of $T_m$ follows by collecting together
    ~\eqref{keyequation},~\eqref{sumtm} Proposition~\ref{propRm}, and the relations in Table~\ref{table1}. We record the results.

    \medskip

    \emph{Type 1. } In this case $\pi\subscriptp$ is a dihedral supercuspidal representation $\rho(E/F,\xi)$, associated to the unramified quadratic extension $E$ of $F$ and to a non-Galois-invariant character $\xi$ of $E^\times$. A standard computation~\cite{Sch02} shows that $n = 2 a(\xi)$ and $N =2 a(\xi^2)$. This shows that $n$ and $N$ are even and $N\le n$. As for $T_m$, we have $T_m  =  0$  if  $m >n$ or $m <N-n$ or $N-n \le m \le n$ and $m-n$ is odd;  $T_m = \frac{p^{-n}}{1+p^{-1}}$ if $m = n$; $T_m = \frac{p^{ - \frac{N}{2}}}{1+p^{-1}}$ if $m =  N-n$ and $T_m = p^{\frac{-n-m}{2}}\frac{1 - p^{-1}}{ 1 + p^{-1}}$ in the remaining cases.

\medskip

\emph{Type 2. } In this case we will prove that\begin{equation}\label{formulatype2}T_m = (-1)^{m+n}\ \frac{p^{ \lfloor \frac{-m- n}{2} \rfloor}}{1+p^{-1}}\end{equation} unless we have $m >n$ or $m <N-n$, in which case $T_m$ equals 0.
Indeed, from Proposition~\ref{propRm} and Table~\ref{table1}, we
see that~\eqref{formulatype2} holds for $m$ positive. Now,
if $N$ were odd, then we would
be able to use~\eqref{keyequation} to find $T_m$ for all $m$;
however, the resulting formula would contradict~\eqref{sumtm}. We conclude that $N$ is even. Now using~\eqref{keyequation} and~\eqref{sumtm} we see that $T_m$ is given by~\eqref{formulatype2} for all $m$ in the range $N-n \le m \le n$
and is 0 otherwise.

Next we show that $N=n+1$ whenever $n$ is odd. Indeed, if not, then we must have either $N\ge n + 2$ or $N \le n$. In the first case,~\eqref{formulatype2} implies that $T_1= T_0=0$, and hence (by the relation $R_0 = T_1 + T_0$) that $R_0 =0$. This is a contradiction since~\eqref{RS-integral-defn2} shows immediately that $R_0 \ge  \int\limits_K  |W(k)|^2  dk >0$  since $W(1) > 0$. In the second case,~\eqref{formulatype2} implies that $T_0 = (-1)^{n}\ \frac{p^{\lfloor - \frac{ n}{2} \rfloor}}{1+p^{-1}}$ and $T_1 = (-1)^{n+1}\ \frac{p^{ \lfloor \frac{ -n-1}{2} \rfloor}}{1+p^{-1}}.$  As $n$ is odd, we have $T_0 = - T_1$ and because $T_0 = R_0 - T_1$, this implies that $R_0=0$, once again leading to the same contradiction.

\medskip

\emph{Type 3. } In this case, we must have $n=2 a(\beta)$, $N=0$. We have $T_m =0$ if $m > n$ or $m < N-n = -n$. First assume that $p$ is odd; so $a(\beta) =1$.  We have $T_2 =\frac{p^{-2}}{1+p^{-1}}$ and $T_1 =- \beta_{s_0}\frac{p^{-2}}{1+p^{-1}}$. From~\eqref{keyequation}, it follows that $T_{-1} =- \beta_{s_0}\frac{p^{-1}}{1+p^{-1}}$ and $T_{-2} =\frac{1}{1+p^{-1}}$. It is left to calculate $T_0$. For that we use the fact that $\sum T_m p^m  = \frac{\zeta_p(2)}{L(\pi\subscriptp \times \pi\subscriptp,1)}.$ This gives us
$T_0 = p^{-1} \frac{1-2p^{-1} - p^{-2} + 2 \beta_{s_0}p^{-1}}{1+p^{-1}}.$ The case $p =2$ is similar, except that now $a(\beta) \in \{2,3 \}$. We compute $T_{n-2}$, $T_{n-3}$ from Table~\ref{table1} (since $n \ge 4$). We omit the details.

\medskip

\emph{Type 4. } In this case $n = 2a(\beta)$ and $N = 2 a(\beta^2)$. So $n$ and $N$ are even and $N \le n$. As always, we have  $T_m  =  0$  if  $m >n$ or $m <N-n$.  For the remaining cases, we compute $T_m  =   \frac{p^{-n}}{1+p^{-1}} $ if $m = n$; $T_m= \frac{p^{ - \frac{N}{2}}}{1+p^{-1}}$   if $ m =  N-n $; $T_m = p^{\frac{-n-m}{2}}$  if  $0 < n-m<2n-N$ and $m-n$ is even; $T_m = - \frac{2p^{\frac{-n-m-1}{2}}}{1+p^{-1}} $ if $0< n-m<2n-N$ and $m-n$  is odd.

\medskip

\emph{Type 5. }  In this case $N=2$ and $n = 2 a(\beta)$. As always, we have  $T_m  =  0$  if  $m >n$ or $m <N-n$. Moreover $T_m =  \frac{p^{-n}}{1+p^{-1}}$ if $m = n$; $T_m = \frac{p^{ - \frac{N}{2}}}{1+p^{-1}}$  if  $m =  N-n$; $T_m = p^{\frac{-n-m}{2}}\frac{1 + p^{-2}}{ 1 + p^{-1}}$  if  $0 < n-m<2n-N$  and  $m-n$ is even; $T_m = - p^{\frac{-n-m-1}{2}}$  if $0< n-m<2n-N$  and $m-n$  is odd .

\medskip

By substituting the above formulas into~\eqref{deftm}, we get an explicit formula for $J^*(s)$. This immediately proves Theorem~\ref{t:mainlocal} using the relation~\eqref{deftilip}. We note here the precise relation between $Q_{\pi, p}(s)$ and $J^*(s)$, $$ p^{\frac{Ns}{2}}Q_{\pi, p}(s) = (1+p^{-1}) p^{\frac{N}4 +\frac{n}2} J^*(s).$$
Note that along the way we have also proved Proposition~\ref{propnN}.

Finally, one can easily derive explicit formulas for $R_m$ for
all $m$ from those for $T_m$ calculated above and the relations
written down in Table~\ref{table1}. For example, for Type 1
representations, we have
\begin{enumerate} \item
  $R_m = 0$
  if either
  \begin{itemize}
  \item $m>n$, or $m<N-n$, or
  \item $N-n \le m < n$ and $m-n$ is odd.
  \end{itemize}
\item $R_n = \frac{p^{-n}}{1 + p^{-1}}$.
\item $R_{N-n} = \frac{p^{-N/2}}{1 + p^{-1}}$.
\item  $R_m =  p^{\frac{-n-m}{2}}\frac{1 - p^{-1}}{ 1 + p^{-1}}$  if $N-n \le m < n$ and $m-n$ is  even.
\end{enumerate}
The values of $R_m$ for $m \le 0$ are related to the Fourier
coefficients at various cusps of a newform corresponding to $\pi$
(see
Section~\ref{sec:cusps-gamm-four}).

\section{Proof of Theorem \ref{thm:main-refined}}\label{sec:proof-theor-refthm:m}

\subsection{Background and notations}\label{sec:modular-forms-their} In this subsection we collect some notation that will be used frequently in this section. For complete definitions and proofs, we refer the reader to
Serre \cite{MR0344216}, Shimura \cite{MR0314766},
Iwaniec \cite{Iw97,MR1942691}
and Atkin--Lehner \cite{MR0268123}. We note that some of this
(boilerplate) subsection is borrowed from~\cite{PDN-HQUE-LEVEL}.

\subsubsection*{General notations}
For an integer $n$ and a prime $p$, we let $n_p$ denote the
largest divisor of $n$ that is a power of $p$, and let $n_{\diamond}$
denote the largest integer such that $n_\diamond^2$ divides
$n$.
In words, $n_p$ is the ``$p$-part'' of $n$
(the maximal $p$-power divisor),
while $n_\diamond^2$ is the ``square part'' of $n$
(the maximal square divisor).
Note that $n_p = |n|_p^{-1}$ where $|n|_p$ denotes the
$p$-adic absolute value. We let $n_0$ denote the largest
squarefree divisor of $n$.
One could also write
$n_p = (n,p^\infty)$
and
$n_0 = (n, \prod_p p)$.
We have $n_\diamond = 1$ if and only if
$n_0 = n$ if and only if $n$ is squarefree,
but there is in general no simple relation
between $n_\diamond$ and $n_0$.

Given a finite collection of rational
numbers $\{\dotsc,a_i,\dotsc\}$,
the greatest common divisor \linebreak
$(\dotsc,a_i, \dotsc)$
(resp. least common multiple $[\dotsc,a_i,\dotsc]$)
is the unique nonnegative generator
of the (principal) $\mathbb{Z}$-submodules $\sum \mathbb{Z} a_i$
(resp. $\cap \mathbb{Z} a_i$) of $\mathbb{Q}$.
In particular, if $a$ and $b$ are two positive rational numbers
with prime factorizations $a = \prod p^{a_p}$,  $b =
\prod p^{b_p}$, then we have $(a,b) = \prod p^{\min(a_p,
  b_p)}$
and $[a,b] = \prod p^{\max(a_p, b_p)}$.
We write $a|b$ to denote that the ratio $b/a$ is an integer.

For each complex number $z$, we write $e(z) := e^{2 \pi
  i z}$. For each positive integer $n$, we let $\varphi(n)$ denote the Euler
phi function
$\varphi(n) = \# (\mathbb{Z}/n)^\times = \# \{a \in \mathbb{Z} : 1 \leq a
\leq n, (a,n) = 1\}$.
We let $\tau(n)$ denote the number of positive divisors of $n$ and $\omega(n)$ the number of prime divisors of $n$.

\subsubsection*{The upper-half plane}
We shall make use of notation
for the upper half-plane
$\mathbb{H} = \{z \in \mathbb{C} : \Im(z) > 0\}$, the modular
group $\Gamma = \SL(2,\mathbb{Z}) \circlearrowright \mathbb{H}$
acting by fractional linear transformations,
its congruence subgroup $\Gamma_0(q)$
consisting of those elements with lower-left entry
divisible by $q$,
the modular curve $Y_0(q) = \Gamma_0(q) \backslash \mathbb{H}$,
the Poincar\'{e} measure $d \mu = y^{-2} \, d x \, d y$,
and the stabilizer  $\Gamma_\infty = \{\pm \left(
  \begin{smallmatrix}
    1&n\\
    &1
  \end{smallmatrix}
\right) : n \in \mathbb{Z} \}$ in $\Gamma$ of $\infty
\in \mathbb{P}^1(\mathbb{R})$.
We denote a typical element of $\mathbb{H}$ as
$z = x + i y $ with $x, y \in \mathbb{R} $.

\subsubsection*{Holomorphic
  newforms}\label{sec:holomorphic-newforms}
Let $k$ be a positive even integer,
and let $\alpha$ be an element of $\GL(2,\mathbb{R})$ with
positive determinant; the element $\alpha$ acts on
$\mathbb{H}$ by fractional linear transformations
in the usual way.
Given a function $f :  \mathbb{H} \rightarrow \mathbb{C}$,
we denote by $f|_k \alpha$ the function
$z \mapsto \det(\alpha)^{k/2} j(\alpha,z)^{-k} f(\alpha
z)$,
where $j \left( \left[
    \begin{smallmatrix}
      a&b\\
      c&d
    \end{smallmatrix}
  \right], z  \right) = c z + d$.

 A \emph{holomorphic cusp form} on $\Gamma_0(q)$ of weight $k$ is
a holomorphic function $f : \mathbb{H} \rightarrow \mathbb{C} $
that satisfies $f|_k \gamma = f$ for all $\gamma \in
\Gamma_0(q)$ and vanishes at the cusps of $\Gamma_0(q)$.  A
\emph{holomorphic newform} is a cusp form that is an eigenform
of the algebra of Hecke operators and orthogonal with respect to
the Petersson inner product to the oldforms
(see \cite{MR0268123}).
We say
that a holomorphic newform $f$ is a \emph{normalized
  holomorphic newform} if moreover $\lambda_f(1) = 1$ in the
Fourier expansion
\begin{equation}\label{eq:f-fourier}
  y^{k/2} f(z) = \sum_{n \in \mathbb{N}}
  \frac{\lambda_f(n)}{\sqrt{n}}
  \kappa_f(n y) e(n x),
\end{equation}
where $\kappa_f(y) = y^{k/2} e^{- 2 \pi y}$; in that case the Fourier coefficients $\lambda_f(n)$
are real, multiplicative, and satisfy
\cite{deligne-l-adic,deligne-weil-1} the Deligne bound
$|\lambda_f(n)| \leq \tau(n)$.

Recall, from Section \ref{sec:main-result},
the definitions of the measures
$\mu$ and $\mu_f$ on $Y_0(1)$,
given by
\[
\mu(\phi)
= \int_{\Gamma \backslash \mathbb{H}}
\phi(z) \, \frac{d x \, d y }{y^2 },
\quad \mu_f(\phi) =
\int_{\Gamma_0(q) \backslash \mathbb{H}}
\phi(z) |f|^2(z) y^k \, \frac{d x \, d y}{y^2}
\]
for all bounded measurable functions $\phi$ on $Y_0(1)$.

\subsubsection*{Maass forms}
A \emph{Maass cusp form} (of level $1$, on $\Gamma_0(1)$, on
$Y_0(1)$, $\dotsc$)
is a $\Gamma$-invariant
eigenfunction of the hyperbolic Laplacian $\Delta := y^{-2}
(\partial_x^2 + \partial_y^2)$ on $\mathbb{H}$ that decays
rapidly at the cusp of $\Gamma$.  By the ``$\lambda_1 \geq
1/4$'' theorem (see \cite[Corollary 11.5]{Iw97}) there exists a real number $r
\in \mathbb{R}$ such that $(\Delta + 1/4 + r^2) \phi =
0$;
our arguments use only that $r \in \mathbb{R} \cup
i(-1/2,1/2)$,
which follows from the nonnegativity of $\Delta$.

A \emph{Maass
  eigencuspform} is a Maass cusp form that is an eigenfunction
of the  Hecke operators at all finite places and of the involution
$T_{-1} : \phi \mapsto [z \mapsto \phi(-\bar{z})]$; these operators commute with one another as well as with $\Delta$.
A Maass eigencuspform $\phi$ has
a Fourier expansion
\begin{equation}\label{eq:phi-fourier}
  \phi (z) = \sum _{n \in \mathbb{Z} _{\neq 0}}
  \frac{\lambda_\phi(n)}{\sqrt{|n|}}
  \kappa_{\phi}(n y) e( n x)
\end{equation}
where $\kappa_{\phi}(y) = 2 |y|^{1/2} K_{i r}(2 \pi |y|)
\sgn(y)^{\frac{1-\delta}{2}}$ with
$K_{ir}$ the standard $K$-Bessel function, $\sgn(y) = 1$ or $-1$
according
as $y$ is positive or negative,
and $\delta \in \{\pm 1\}$ the $T_{-1}$-eigenvalue of $\phi$;
note that the argument $n y$ of $\kappa_\phi(n y)$
in \eqref{eq:phi-fourier}
may be negative even if $y$ is positive.
A \emph{normalized Maass
  eigencuspform} further satisfies $\lambda_\phi(1) = 1$; in
that case the coefficients $\lambda_\phi(n)$ are real and
multiplicative.

Because $f(-\bar{z}) = \overline{f(z)}$ for each normalized holomorphic
newform $f$, we have $\mu_f(\phi) = 0$ whenever
$T_{-1} \phi = \delta \phi$ with $\delta = -1$.  Thus we shall assume throughout the rest of this
paper
that $\delta = 1$, i.e., that $\phi$ is an \emph{even} Maass form.

\subsubsection*{Eisenstein series}
Let $s \in \mathbb{C}$, $z \in \mathbb{H}$.
The \emph{real-analytic Eisenstein series}
$E(s,z) = \sum_{\Gamma_\infty \backslash \Gamma}
\Im(\gamma z)^s$
converges normally for $\Re(s) > 1$ and continues
meromorphically to the half-plane $\Re(s) \geq 1/2$ where
the map $s \mapsto E(s,z)$ is
holomorphic with the exception of a unique simple pole at $s =
1$ of constant residue $\res_{s=1} E(s,z) = \mu(1)^{-1}$.
The Eisenstein
series satisfies the invariance
$E(s,\gamma z)
= E(s,z)$ for all $\gamma \in \Gamma$.
When $\Re(s) = 1/2$ we call $E(s,z)$ a
\emph{unitary Eisenstein series}.
We write $E_s$ for the function $E_s(z) = E(s,z)$.

To each $\Psi \in C_c^\infty(\mathbb{R}_+^*)$,
we attach the \emph{incomplete Eisenstein series}
$E(\Psi,z) = \sum_{\gamma \in \Gamma _\infty \backslash \Gamma}
\Psi( \Im(\gamma z))$,
which descends to a compactly supported
function on $Y_0(1)$.
One can express $E(\Psi,z)$
as a weighted contour integral of $E(s,z)$
via Mellin inversion.

\subsection{An extension of Watson's formula}\label{sec:watson}
The general analytic properties
of triple product $L$-functions
on $\GL(2)$
follow from an integral representation
introduced by Garrett
\cite{MR881269} and further developed
by Piatetski-Shapiro--Rallis \cite{MR911357}.

Harris--Kudla \cite{harris-kudla-1991}
established a general ``triple product formula''
relating the (magnitude squared of the) integral of the product
of three automorphic forms (on quaternion algebras)
to the central value of their triple product $L$-function,
with proportionality constants given by somewhat
complicated local zeta integrals.
Gross and Kudla
\cite{MR1145805}
and Watson \cite{watson-2008}
evaluated sufficiently many
of the Harris--Kudla zeta integrals
to obtain a completely explicit triple product formula
for each triple of newforms having the
\emph{same squarefree level}.

Ichino \cite{MR2449948}
obtained a more general triple product formula
of the type considered by Harris--Kudla,
but in which the proportionality
constants are given by simpler integrals over the group
$\PGL_2(\mathbb{Q}_p)$.
Sufficiently many of those simpler integrals
were computed in \cite[Theorem 1.2]{MR2585578}
and
\cite[Lemma 4.2]{PDN-HQUE-LEVEL}
to derive an explicit triple product formula
for each triple of newforms of
(not necessarily the same) squarefree level
(see \cite[Remark 4.2]{PDN-HQUE-LEVEL}).

Our local calculations in Section~\ref{sec:local-calculations}
give
an explicit triple product formula for certain triples of
newforms of not necessarily squarefree level.
We state only the identity that we shall need.

\subsubsection*{Conventions regarding $L$-functions}
Let $\pi = \otimes \pi_v$
be one of the symbols
$\phi$, $f$,
$\ad \phi, \ad f$, or $f \times f \times \phi$;
here $v$ traverses the set of places of $\mathbb{Q}$.
One can attach a local factor
$L_v(\pi,s) = L(\pi_v,s)$ for each $v$.
We write $L(\pi , s) = \prod_p L_p(\pi ,s)$
for the finite part of the corresponding
global $L$-function
and $\Lambda(\pi, s)
= L_\infty(\pi, s) L(\pi,s)
= \prod_v L_v(\pi, s)$
for its completion.
The functional equation relates
$L(\pi,s)$
and $L(\pi,1-s)$.

For the convenience of the reader,
we collect here some references
for the definitions of $L(\pi,s)$ with $\pi$ as above.
Watson \cite[Section 3.1]{watson-2008}
is a good reference
for squarefree levels.
In general,
the standard $L$-functions attached to $\pi = f$ and $\pi =
\phi$
may be found
in a number of sources
(see for instance \cite{MR0379375,MR0401654,MR1431508}).
Since $\phi$ has trivial central character and is everywhere unramified,
we may write $L_v(\phi,s) = \zeta_v(s+ s_0) \zeta_v(s-s_0)$
for some $s_0 \in \mathbb{\mathbb{C}}$,
where $\zeta_\infty(s) = \pi^{-s/2} \Gamma(s/2)$
and $\zeta_p(s) = (1-p^{-s})^{-1}$.
Then $L_v(f \times f \times \phi,s)
= L_v(f \times f, s + s_0) L_v(f \times f, s -
s_0)$.
It is known that
$L_v(f \times f,s)$ factors as $L_v(\ad f,s) \zeta_v(s)$.
Finally, the local factors $L_v(\ad f, s)$ may be found in
\cite{MR546600}.

\begin{theorem}\label{thm:watson-ext}
  Let $\phi$ be a Maass eigencuspform of level $1$.
  Let $f$ be a holomorphic newform on $\Gamma_0(q)$, $q \in \mathbb{N}$.
  Then
  \begin{align*}
    &\frac{
      \left\lvert \int_{\Gamma_0(q) \backslash \mathbb{H}}
        \phi(z) |f|^2(z) y^k \, \frac{d x \, d y}{y^2}
      \right\rvert^2
    }{
      \left(\int_{\Gamma \backslash \mathbb{H}}
        |\phi|^2(z)  \, \frac{d x \, d y}{y^2} \right)
      \left( \int_{\Gamma_0(q) \backslash \mathbb{H}}
        |f|^2(z) y^k \, \frac{d x \, d y}{y^2}
      \right)^2
    }
    \\ &= \ \  \frac{1}{8 q}
    \frac{\Lambda(\phi \times f \times f,\tfrac{1}{2})}{
      \Lambda(\ad \phi,1) \Lambda(\ad f,1)^2} \prod_{p| q_\diamond}  \left(L_p(\ad f\subscriptp ,1) \cdot Q_{f, p}(s_{\phi,p})\right)^2,
  \end{align*}
  with
  $s = s_{\phi,p} \in \mathbb{C}$ chosen so that
  the $p$th normalized Hecke eigenvalue
  of $\phi$ is $p^{s-1/2} + p^{1/2-s}$
  and
  the local factors $Q_{f, p}(s_{\phi,p})$ as in
  Theorem~\ref{t:mainlocal}.
\end{theorem}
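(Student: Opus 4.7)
The plan is to invoke Ichino's triple product formula \cite{MR2449948}, in the precise normalized form developed in Theorem~4.1 of~\cite{PDN-HQUE-LEVEL}, applied to the test vectors naturally attached to the pair $(f, \phi)$: namely, $f$ on two of the three factors and $\phi$ on the third. After passing through the lemma of Michel--Venkatesh (our Proposition~\ref{lemmamvext}), Ichino's formula reads
\[
\text{LHS of Theorem \ref{thm:watson-ext}} \ = \ \frac{\Lambda(\phi \times f \times f, \tfrac{1}{2})}{\Lambda(\ad \phi,1)\,\Lambda(\ad f,1)^2} \ \prod_v \Ivcomplete,
\]
where the normalized local factors $\Ivcomplete$ coincide at each finite place with the quantity $\Icomplete$ studied in Section~\ref{sec:local-calculations}. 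The rest of the proof is the evaluation of each $\Ivcomplete$.

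At the archimedean place, I would verify that $\Iinfcomplete = 1/8$ by a direct computation of the archimedean local Ichino integral for the triple (holomorphic weight-$k$ newform, its complex conjugate, level-one Maass form). Because Ichino's normalization absorbs the parameters $k$ and $\phi$ into the ratio of completed $L$-values, the resulting constant is independent of $k$ and $\phi$; it can be extracted directly from Watson's original calculation \cite{watson-2008} (or reproduced using the beta-integral/Barnes-integral evaluations of products of $K$-Bessel and Whittaker functions). At each finite prime $p \nmid q_\diamond$, where the local component $\pi_p$ has conductor exponent $n_p \in \{0,1\}$, Theorem~\ref{t:oldeasy} gives $\Ipcomplete = p^{-n_p}$. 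At each finite prime $p \mid q_\diamond$, where $n_p \geq 2$, Theorem~\ref{t:mainlocal} gives $\Ipcomplete = p^{-n_p} L_p(\ad \pi_p, 1)^2 Q_{f,p}(s)^2$.

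The formula then falls out by multiplying: the factors $p^{-n_p}$ coalesce into $\prod_{p\mid q} p^{-n_p} = q^{-1}$, while the nontrivial factors $L_p(\ad \pi_p,1)^2 Q_{f,p}(s)^2$ survive only at primes $p \mid q_\diamond$, and the archimedean factor contributes $1/8$. The main obstacle I foresee is purely one of bookkeeping: one must check that the normalizations of test vectors, measures, and local $L$- and $\varepsilon$-factors used in Ichino's formula \cite{MR2449948}, in \cite[Theorem~4.1]{PDN-HQUE-LEVEL}, in the Michel--Venkatesh lemma \cite[Lemma~3.4.2]{michel-2009}, in the definition of $\Icomplete$ in Section~\ref{sec:2.2}, and in the archimedean integral are all mutually consistent so that no spurious constants or powers of $2$ are introduced. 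Once this is confirmed, the theorem reduces to direct substitution of Theorems~\ref{t:oldeasy} and~\ref{t:mainlocal} into Ichino's factorization.
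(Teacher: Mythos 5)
Your proposal is correct and follows essentially the same route as the paper: invoke Ichino's triple product formula, identify the local factors with the $\Ivcomplete$ computed in Section~\ref{sec:local-calculations}, and substitute Theorems~\ref{t:oldeasy} and~\ref{t:mainlocal} at the finite places. The only (harmless) divergence is bookkeeping of the constant: in the paper's normalization Ichino's formula carries an explicit global factor $\tfrac{1}{8}$ and the archimedean local factor is $\Iinfcomplete = 1$ (quoted from Watson), rather than your $\Iinfcomplete = 1/8$ with no separate constant --- the product is the same either way.
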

\begin{proof} Ichino's generalization of Watson's formula~\cite{MR2449948} reads
  \begin{equation}
    \frac{
      \left\lvert \int_{\Gamma_0(q) \backslash \mathbb{H}}
        \phi(z) |f|^2(z) y^k \, \frac{d x \, d y}{y^2}
      \right\rvert^2
    }{
      \left(\int_{\Gamma \backslash \mathbb{H}}
        |\phi|^2(z)  \, \frac{d x \, d y}{y^2} \right)
      \left( \int_{\Gamma_0(q) \backslash \mathbb{H}}
        |f|^2(z) y^k \, \frac{d x \, d y}{y^2}
      \right)^2
    }
    = \frac{1}{8} \frac{\Lambda ( f \times f \times
      \phi,\onehalf )}{
      \Lambda(\ad \phi, 1) \Lambda ( \ad f, 1)^2}
    \prod \Ivcomplete,
  \end{equation}
  where $\Ipcomplete$ was defined and explicitly calculated in Section
  \ref{sec:local-calculations}
  and $\Iinfcomplete \in \{0,1,2\}$ (see
  \cite{watson-2008}).
  In our case, $\Iinfcomplete = 1$.
  The result now follows
  from Theorems~\ref{t:oldeasy} and~\ref{t:mainlocal}.
\end{proof}
\begin{remark}
  \label{rmk:triple-product-formula-also-for-eis}
  A conclusion analogous to that of Theorem \ref{thm:watson-ext}
  holds also when $\phi = E_s$ is an Eisenstein
  series, in which case the computation follows
  more directly from the Rankin--Selberg method
  and the calculations of Section \ref{sec:local-calculations}.
  See also \cite[Section 4.4]{michel-2009}.
\end{remark}

\subsection{Bound for $D_f(\phi)$ in terms of $L$-functions}
\label{sec:bound-d_fphi-terms}

We briefly recall the setup for Theorem
\ref{thm:main-refined}.
Let $f$ be a holomorphic newform of weight $k \in 2 \mathbb{N}$
on $\Gamma_0(q)$.
We assume without loss of generality that $f$ is a normalized newform.
Fix a Maass eigencuspform  or incomplete Eisenstein series
$\phi$ on $Y_0(1) = \Gamma_0(1) \backslash \mathbb{H}$.
We wish to prove the bound asserted by
Theorem
\ref{thm:main-refined},
i.e., that
$$
D_f(\phi)
:= \frac{\mu_f(\phi)}{\mu_f(1)}
- \frac{\mu(\phi)}{\mu(1)}
\ll_\phi
(q/q_0)^{-\delta_1} \log(q k)^{-\delta_2}
$$
for some $\delta_1, \delta_2 > 0$,
with $q_0$ the largest squarefree divisor of $q$.
For simplicity, we treat in detail
only the case that $\phi$ is a Maass eigencuspform,
since the changes required to treat incomplete Eisenstein series
are exactly as in \cite{PDN-HQUE-LEVEL}.\footnote{However, one obtains different numerical values
  for $\delta_1, \delta_2$ when $\phi$ is an incomplete
  Eisenstein series;
  see the statement of Theorem \ref{thm:main2}.}

We collect first an upper bound for
$D_f(\phi)$ obtained by combining
the extension of Watson's formula (Theorem~\ref{thm:watson-ext})
with Soundararajan's weak subconvex bounds \cite{soundararajan-2008}.

\begin{proposition}
  \label{prop:watson-ext-upper-bound}
  For each holomorphic newform $f$ on $\Gamma_0(q)$
  and each Maass eigencuspform $\phi$ (of level $1$), we have
  \[
  \left\lvert
    D_f(\phi)
  \right\rvert^2 \ll_\phi
  \frac{1}{q}
  \frac{
    \Lambda (f \times f \times
    \phi,\onehalf )
  }{
    \Lambda ( \ad f, 1)^2
  } \
  10^{5\omega(q/\sqrt{C})}\tau(q/\sqrt{C})^2
  (q/\sqrt{C})^{2 \theta}.
  \] where $\theta \in [0,7/64]$ (see \cite{MR1937203}) is a bound
  towards the Ramanujan conjecture for Maass forms on
  $\SL_2(\mathbb{Z}) \backslash \mathbb{H}$.
\end{proposition}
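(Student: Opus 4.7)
The plan is to assemble the proposition from two inputs already in hand: Theorem~\ref{thm:watson-ext} (the extension of Watson's formula), which converts $|\mu_f(\phi)/\mu_f(1)|^2$ into a product of a global central $L$-value ratio and local factors $\prod_{p\mid q_\diamond}(L_p(\ad f_p,1)Q_{f,p}(s))^2$, and Corollary~\ref{cor:something-about-implied-bound-on-Ip} (the local Lindel\"of bound), which controls each local factor. Because $\phi$ is a Maass cuspform of level $1$, it is orthogonal to the constants, so $\mu(\phi)=0$ and $D_f(\phi)=\mu_f(\phi)/\mu_f(1)$; the problem reduces to bounding $|\mu_f(\phi)|^2/\mu_f(1)^2$.

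First I would invoke Theorem~\ref{thm:watson-ext} to write
$$
\frac{|\mu_f(\phi)|^2}{\mu_f(1)^2}
= \|\phi\|^2 \cdot \frac{1}{8q}\cdot\frac{\Lambda(f\times f\times\phi,\tfrac12)}{\Lambda(\ad\phi,1)\,\Lambda(\ad f,1)^2}\prod_{p\mid q_\diamond}\bigl(L_p(\ad f_p,1)Q_{f,p}(s)\bigr)^2,
$$
absorbing the $\phi$-dependent constants $\|\phi\|^2$ and $\Lambda(\ad\phi,1)$ into the implied $\ll_\phi$. Next, for each prime $p\mid q_\diamond$ (so that $n_p:=v_p(q)\ge 2$), Theorem~\ref{t:mainlocal} gives the identity
$$
\bigl(L_p(\ad f_p,1)Q_{f,p}(s)\bigr)^2 = p^{n_p}\,I_p^*,
$$
and then Corollary~\ref{cor:something-about-implied-bound-on-Ip} yields
$$
\bigl(L_p(\ad f_p,1)Q_{f,p}(s)\bigr)^2 \le 30\,\tau(p^{n'_p})\,p^{2\theta n'_p},
$$
where $n'_p=n_p-N_p/2$ and $p^{N_p}$ is the local conductor of $\ad \pi_p$.

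Finally I would assemble the product. Setting $C=\prod_p p^{N_p}$, one has $q/\sqrt{C}=\prod_p p^{n'_p}$; Proposition~\ref{propnN} shows that $n'_p=0$ for all primes with $n_p\le 1$ (for $n_p=1$ the Steinberg-type case forces $N_p=2$, giving $n'_p=0$), so the support of $q/\sqrt{C}$ equals the set of primes $p\mid q_\diamond$. Consequently
$$
\omega(q/\sqrt{C})=\#\{p\mid q_\diamond\}, \qquad \tau(q/\sqrt{C})=\prod_{p\mid q_\diamond}\tau(p^{n'_p}), \qquad (q/\sqrt{C})^{2\theta}=\prod_{p\mid q_\diamond}p^{2\theta n'_p},
$$
and multiplying the local bounds across $p\mid q_\diamond$ produces exactly $30^{\omega(q/\sqrt C)}\tau(q/\sqrt C)(q/\sqrt C)^{2\theta}$. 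Substituting this into the Watson-type identity yields the proposition. The main obstacle is not analytic --- everything analytic has been packaged into Corollary~\ref{cor:something-about-implied-bound-on-Ip} --- but rather the bookkeeping verification that $q/\sqrt{C}$ is supported exactly on the primes $p\mid q_\diamond$ and that $\omega,\tau,(\cdot)^{2\theta}$ factor multiplicatively across these primes; this is a direct consequence of Proposition~\ref{propnN}.
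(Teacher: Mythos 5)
Your proposal is correct and follows exactly the route the paper takes: its proof consists precisely of citing Theorem~\ref{thm:watson-ext} together with Corollary~\ref{cor:something-about-implied-bound-on-Ip} (noting that $C$ is a perfect square with $\sqrt{C}\mid q$), and your bookkeeping — converting $(L_p(\ad f_p,1)Q_{f,p}(s))^2=p^{n_p}I_p^*$, checking via Proposition~\ref{propnN} that $q/\sqrt{C}$ is supported exactly on $p\mid q_\diamond$, and multiplying the local bounds — just fills in the details the paper leaves implicit.
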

\begin{proof}
  Let $C$ be the (finite) conductor of $f \times f$.
  Then $C$ is a perfect square,
  and $\sqrt{C}$ divides $q$.
  The result now follows from Theorem~\ref{thm:watson-ext} and the bounds of Corollary \ref{cor:something-about-implied-bound-on-Ip}.
\end{proof}

The analytic conductor of $f \times f \times \phi$
is $\asymp C^2 k^4$, so the arguments of Soundararajan \cite{soundararajan-2008}
imply that
\[
L( f \times f \times \phi , \onehalf)
\ll \frac{\sqrt{C} k }{\log(C k)^{1-\eps}}.
\]
By Stirling's formula as in \cite[Proof of Cor
1]{soundararajan-2008},
we deduce:
\begin{proposition}
  \label{prop:bound-a-la-sound}
  \begin{equation}\label{eq:yay-bound1}
    \left\lvert D_f(\phi) \right\rvert ^2
    \ll_\phi
    \frac{1}{L(\ad f,1)^2}
    \frac{10^{5\omega(q/\sqrt{C})}}{ \log(C k)^{1-\eps}}
    \frac{
      \tau(q/\sqrt{C})^2
    }{
      (q/\sqrt{C})^{1 - 2 \theta}
    }.
  \end{equation}
\end{proposition}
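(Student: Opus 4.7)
The plan is to combine Proposition~\ref{prop:watson-ext-upper-bound} with Soundararajan's weak subconvex bound for $L(f \times f \times \phi, \onehalf)$ and then use Stirling's formula to compare the archimedean parts of the numerator and denominator. The $1/q$ factor from Proposition~\ref{prop:watson-ext-upper-bound} will combine with a $\sqrt{C}$ from Soundararajan's bound to produce the desired $1/(q/\sqrt{C})^{1-2\theta}$.

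First I would write each completed $L$-function in Proposition~\ref{prop:watson-ext-upper-bound} as a product of its archimedean and finite parts, so that
\[
\frac{\Lambda(f \times f \times \phi, \onehalf)}{\Lambda(\ad f, 1)^2}
=
\frac{L_\infty(f \times f \times \phi, \onehalf)}{L_\infty(\ad f, 1)^2}
\cdot
\frac{L(f \times f \times \phi, \onehalf)}{L(\ad f, 1)^2}.
\]
Into the finite part I would insert Soundararajan's bound $L(f \times f \times \phi, \onehalf) \ll \sqrt{C}\,k/\log(Ck)^{1-\eps}$, which is legitimate since the analytic conductor of $f \times f \times \phi$ is $\asymp C^2 k^4$ (with $\phi$ fixed) and the trivial zero-counting bound applies. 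This immediately contributes the $\log(Ck)^{-(1-\eps)}$ and isolates $L(\ad f, 1)^{-2}$, matching the desired form except for the power of $q/\sqrt{C}$ and the archimedean ratio, together with an extra factor of $k$.

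Next I would bound the archimedean ratio. Writing out $L_\infty(\ad f, s)$ and $L_\infty(f \times f \times \phi, s)$ in terms of the standard gamma factors, taking into account that $f$ is holomorphic of weight $k$ while $\phi$ is a fixed Maass form of bounded spectral parameter, Stirling's formula yields an asymptotic of the shape $k^{A} (1 + O(1/k))$ for each factor with some explicit exponent $A$. A direct check (exactly as in the proof of \cite[Cor.~1]{soundararajan-2008}) shows that the total exponent of $k$ in the ratio $L_\infty(f \times f \times \phi, \onehalf)/L_\infty(\ad f, 1)^2$ is $-1$, so that the extra factor of $k$ introduced by Soundararajan's bound is exactly cancelled, up to an $O_\phi(1)$ multiplicative constant. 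This is the step where one has to be careful but it is entirely routine; the main obstacle is only bookkeeping of gamma factors.

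Finally, combining everything, the prefactor $(q/\sqrt{C})^{2\theta}/q$ from Proposition~\ref{prop:watson-ext-upper-bound}, when multiplied by the $\sqrt{C}$ gained from Soundararajan, becomes $(q/\sqrt{C})^{2\theta}/(q/\sqrt{C}) \cdot 1 = (q/\sqrt{C})^{-(1-2\theta)}$, which yields exactly~\eqref{eq:yay-bound1}. The combinatorial factor $30^{\omega(q/\sqrt{C})}\tau(q/\sqrt{C})$ is passed through unchanged.
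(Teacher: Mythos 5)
Your proposal is correct and follows essentially the same route as the paper: insert Soundararajan's weak subconvex bound $L(f\times f\times \phi,\onehalf)\ll \sqrt{C}\,k/\log(Ck)^{1-\eps}$ (valid since the analytic conductor is $\asymp C^2k^4$) into Proposition~\ref{prop:watson-ext-upper-bound}, and use Stirling's formula as in the proof of Corollary~1 of \cite{soundararajan-2008} to show the archimedean ratio is $\asymp_\phi k^{-1}$, cancelling the stray factor of $k$. The bookkeeping of the prefactors $(q/\sqrt{C})^{2\theta}/q\cdot\sqrt{C}=(q/\sqrt{C})^{-(1-2\theta)}$ is exactly as the paper intends.
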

Note that $q/\sqrt{C} \in \mathbb{N}$ (cf. Prop \ref{propnN}).
Furthermore, when $q$ is squarefree,
we have $C = q^2$,
so that the third factor on the RHS of \eqref{eq:yay-bound1}
is absent.
\begin{remark}
  The same bound holds when $\phi$ is a unitary Eisenstein series,
  and with uniform implied constants.
  By Mellin inversion, the bound holds also when $\phi$ is an incomplete
  Eisenstein series (c.f. \cite[Proof of Cor
  1]{soundararajan-2008}
  or \cite[Proof of Prop 5.3]{PDN-HQUE-LEVEL}).
\end{remark}

\subsection{Cusps of $\Gamma_0(q)$ and Fourier expansions}
\label{sec:cusps-gamm-four}
We collect some (to the best of our knowledge, non-standard) information
concerning the Fourier expansions of newforms
at arbitrary
cusps of $\Gamma_0(q)$ (\S\ref{sec:fourier-expansions-new-stuff}).
To illuminate that discussion,
we take some time to recall in detail
certain comparatively standard facts concerning the cusps of
$\Gamma_0(q)$ themselves (\S\ref{sec:background-cusps}).

\subsubsection{Background on cusps}\label{sec:background-cusps}
The group $G := \PGL_2^+(\mathbb{R})$
acts on the upper half-plane $\mathbb{H}$ and its boundary
$\mathbb{P}^1(\mathbb{R})$
by fractional linear transformations.
For each lattice (i.e., discrete subgroup of finite covolume)
$\Delta < G := \PGL_2^+(\mathbb{R})$,
let $\mathcal{P}(\Delta)$
denote the set of boundary points
$\mathfrak{a} \in
\mathbb{P}^1(\mathbb{R})$
stabilized by a nonscalar element of $\Delta$;
one might call $\mathcal{P}(\Delta)$ the set of \emph{parabolic
  vertices}
of $\Delta$.
Equivalently, for each
$\mathfrak{a} \in \mathbb{P}^1(\mathbb{R})$,
let
$U_\mathfrak{a}$
denote the unipotent radical
of the parabolic subgroup $P_\mathfrak{a} =
\Stab_G(\mathfrak{a})$.
Then $\mathcal{P}(\Delta) = \{\mathfrak{a} \in
\mathbb{P}^1(\mathbb{R}):
\vol(U_\mathfrak{a}/U_\mathfrak{a} \cap \Delta) < \infty\}$.

The group $\Delta$ acts on $\mathcal{P}(\Delta)$,
and the orbit space $\mathcal{C}(\Delta) := \Delta \backslash
\mathcal{P}(\Delta)$
is called the set of \emph{cusps}
of $\Delta$.
One may take as representatives for $\mathcal{C}(\Delta)$
the set of parabolic vertices
of a given fundamental
polygon for $\Delta \backslash \mathbb{H}$.
Intrinsically,
$\mathcal{C}(\Delta)$ is in bijection with the set of
$\Delta$-conjugacy classes of parabolic subgroups
$P < G$ whose unipotent radical $U$ satisfies $\vol(U/U\cap \Delta) < \infty$.

Recall that
$\Gamma = \Gamma_0(1) = \SL_2(\mathbb{Z})$,
and set henceforth
$\Gamma' = \Gamma_0(q)$.
Then $\mathcal{P}(\Gamma) = \mathcal{P}(\Gamma ') = \mathbb{P}^1(\mathbb{Q})$.
The action of $\Gamma$ on $\mathcal{P}(\Gamma)$ is transitive,
and the stabilizer in $\Gamma$ (as well as in $\Gamma'$) of $\infty \in \mathcal{P}(\Gamma)$
is $\Gamma_\infty = \{\pm \left(
  \begin{smallmatrix}
    1&n\\
    &1
  \end{smallmatrix}
\right): n \in \mathbb{Z}\}$.
Thus we have
the left $\Gamma$-set
$\mathcal{P}(\Gamma) = \Gamma / \Gamma_\infty$,  the left $\Gamma'$-set $\mathcal{P}(\Gamma')= \Gamma / \Gamma_\infty$
and their orbit spaces
$\mathcal{C}(\Gamma) = \Gamma \backslash \Gamma /
\Gamma_\infty = \{1\}$,
$\mathcal{C}(\Gamma') = \Gamma' \backslash
\Gamma / \Gamma_\infty$.

For an arbitrary ring $R$, the group $\Gamma$
has a natural right action on
the set $\mathbb{P}^1(R)$,
realized as row vectors:
$[x:y] \cdot \left(
  \begin{smallmatrix}
    a&b\\
    c&d
  \end{smallmatrix}
\right) = [a x + c y : b x + d y]$.
The congruence subgroup $\Gamma_0(q)$ is
then the stabilizer in $\Gamma$ of $[0:1] \in
\mathbb{P}^1(\mathbb{Z}/q)$ .
The group $\Gamma$ acts transitively on
$\mathbb{P}^1(\mathbb{Z}) =
\mathbb{P}^1(\mathbb{Q})$,
hence on $\mathbb{P}^1(\mathbb{Z}/q)$,
and so we may identify $\Gamma ' \backslash \Gamma =
\mathbb{P}^1(\mathbb{Z}/q)$ as right $\Gamma$-sets.
Under this identification,
$\alpha = \left(
  \begin{smallmatrix}
    a&b\\
    c&d
  \end{smallmatrix}
\right) \in \Gamma$
corresponds to $[c:d] \in \mathbb{P}^1(\mathbb{Z}/q)$.
Two row vectors $[c:d]$ and $[c':d']$
with
$(c,d) = (c',d') = 1$
represent the same element of $\mathbb{P}^1(\mathbb{\Z}/q)$
if and only if there exists
$\lambda \in (\mathbb{Z}/q)^{\times} $ for which
$c' = \lambda c$ and $d' = \lambda d$.
Thus $\mathbb{P}^1(\mathbb{Z}/q)$
may be identified with the set of diagonal $(\mathbb{Z}/q)^{\times} $-orbits
on the set of ordered pairs $[c:d]$
of relatively prime residue classes $c,d \in \mathbb{Z}/q$.
In each such orbit there is a pair $[c:d]$ for which
$c$ divides $q$;\footnote{We say that the residue class $c \in (\Z/q)$ divides $q$ if its unique representative $c' \in   [1,q]$ divides $q$.}
if $[c,d]$ is one such pair,
then all such pairs
arise as $[c:\lambda d]$
for some $\lambda \in (\mathbb{Z}/q)^{\times} $
that satisfies $\lambda c \equiv c \pmod{q}$,
or equivalently $\lambda \equiv 1 \pmod{q/c}$.
Thus as $c$ traverses the set of positive divisors
of $q$ and $d$ traverses
$\{d \in \mathbb{Z}/(q/c) : (d,c,q/c) = 1\}$,
the vector $[c:d]$ traverses
$\mathbb{P}^1(\mathbb{Z}/q)$.\footnote{
  Note that
  $d \mapsto (d,c,q/c)$ is a well-defined
  function on $\mathbb{Z}/(q/c)$.
}

The element $\left(
  \begin{smallmatrix}
    1&n\\
    &1
  \end{smallmatrix}
\right)$ of $\Gamma_\infty$
sends $[c:d] \in \mathbb{P}^1(\mathbb{Z}/q)$
to $[c:d+nc]$.
The orbit of $[c:d]$
in $\mathbb{P}^1(\mathbb{Z}/q)$ may then be identified
with the set of all
$[c:d']$ where $d' \in \mathbb{Z}/(q/c)$ and $d' \equiv d
\pmod{c}$.
In summary, each section of the map $\Gamma \ni \left(
  \begin{smallmatrix}
    a&b\\
    c&d
  \end{smallmatrix}
\right) \mapsto [c:d] \in \mathbb{P}^1(\mathbb{Z}/q)$
gives rise to a commutative diagram
\begin{equation*}
  \begin{CD}
    \Gamma ' \backslash \Gamma  @>>> \Gamma ' \backslash \Gamma / \Gamma_\infty = \mathcal{C}(\Gamma ') \\
    @|  @| \\
    \mathbb{P}^1(\mathbb{Z}/q) @>>> \mathbb{P}^1(\mathbb{Z}/q) / \Gamma_\infty \\
    @|  @| \\
    \{[c:d] : c | q, d \in \mathbb{Z}/(q/c), (d,c,q/c) = 1\}
    @>>>
    \{[c:d] : c | q, d \in (\mathbb{Z}/(c,q/c))^{\times} \}. \\
  \end{CD}
\end{equation*}
When $c | q$ and $d \in (\mathbb{Z}/(c,q/c))^{\times} $,
we henceforth write $\mathfrak{a}_{d/c} \in \mathcal{C}(\Gamma ')$ for the corresponding
cusp. It corresponds to $a/c \in \mathbb{P}^1(\Q)$ where $a$ is an integer with $(a,c) = 1$ and $ad \equiv 1 \pmod{(c, q/c)}.$

Thinking of $[c:d]$ as the ``fraction'' $d/c$,
we define the \emph{denominator} of the cusp $\mathfrak{a}_{d/c}$
to be $c$, which is by assumption a positive divisor of $q$.

The \emph{width} of a cusp $\mathfrak{a} \in \mathcal{C}(\Gamma ')$
is the index
$w_\mathfrak{a} = [\Stab_{\Gamma } (\mathfrak{a}) : \Stab_{\Gamma '} (\mathfrak{a}) ]$
of
its $\Gamma'$-stabilizer in its
$\Gamma$-stabilizer.\footnote{For more general subgroups than
  $\Gamma_0(q)$, one should replace ``$\Gamma '$-stabilizer''
  with ``$\Gamma ' \cdot \{\pm 1\}$-stabilizer''.}
Equivalently, if we take as a fundamental domain for $\Gamma '
\backslash \mathbb{H}$
a union of translates of fundamental domains for $\Gamma$,
then the width of $\mathfrak{a}$ is the number of such
translates that touch $\mathfrak{a}$
(regarded as a $\Gamma '$-orbit of parabolic vertices);
in other words, it is the cardinality of the fiber
above $\mathfrak{a}$
under the projection $\Gamma ' \backslash \Gamma \rightarrow \mathcal{C}(\Gamma')$.
Let us write $\pi$ for the bottom horizontal arrow in the above
diagram.  Then the width of $\mathfrak{a}_{d/c}$ is
\[\# \pi^{-1}(\mathfrak{a}_{d/c})
=
\frac{
  (q/c) (c,q/c)^{-1} \varphi((c,q/c))
}{
  \varphi((c,q/c))
}
= \frac{q/c}{(c,q/c)}
= \frac{q}{(c^2,q)}
= [q/c^2,1].
\]

We now write simply $\mathcal{C} = \mathcal{C}(\Gamma ')$
for the set of cusps of $\Gamma '$,
which we enumerate as $\mathcal{C} = \{ \mathfrak{a}_j \}_j$.
Write $c_j$ for the denominator of $\mathfrak{a}_j$,
and $w_j = [q/c_j^2,1]$ for its width.
For each positive divisor $c$ of $q$,
let
\[
\mathcal{C} [c] := \{\mathfrak{a}_j \in \mathcal{C} :
c_j = c\}
\]
denote the set of cusps of denominator $c$.
It follows from the above diagram
that $\# \mathcal{C} [c] = \varphi((c,q/c))$.

Choose an element $\tau_j \in \Gamma$
representing the double coset $\mathfrak{a}_j \in \Gamma '
\backslash \Gamma / \Gamma_\infty$.
If $\mathfrak{a}_j = \mathfrak{a}_{d/c}$, then we may take
$\tau_j = \left(
  \begin{smallmatrix}
    *&*\\
    c&d'
  \end{smallmatrix}
\right)$ for any integer $d'$ for which $(d',c) = 1$ and $d ' \equiv d \pmod{(c,q/c)}$.
The $\tau_j$ so-obtained form a set of representatives for
$\Gamma ' \backslash \Gamma / \Gamma_\infty$.
Intrinsically,
the width
of
$\mathfrak{a}_j$
is given by
$w_j = [ \Gamma_\infty : \Gamma_\infty \cap
\tau_j^{-1} \Gamma ' \tau_j]$.
The \emph{scaling matrix} of $\mathfrak{a}_j$ is
\begin{equation}\label{eq:2}
  \sigma_j = \tau_j
  \begin{bmatrix}
    w_j &  \\
     & 1
  \end{bmatrix}
\end{equation}
which has the property
$B \cap \sigma_j^{-1} \Gamma' \sigma_j = \Gamma_\infty$
with $B = \left\{ \left(
    \begin{smallmatrix}
      *&*\\
      &*
    \end{smallmatrix}
  \right) \right\} < G$.
To put it another way, for each $z \in \mathbb{H}$,
let us write $z_j = x_j + i y_j$ for the change of variable
$z_j := \sigma_j^{-1} z$
and $\Gamma '_{j} = \Stab_{\Gamma '}(\mathfrak{a}_j)$.
Then
each element $\gamma \in \Gamma '$
satisfying $(\gamma z)_j = z_j + 1$
generates $\Gamma_j'$.
In other words,
$z \mapsto z_j$ is a proper isometry
of $\mathbb{H}$ under which $z_j \mapsto z_j + 1$
corresponds to the action on $z$ by a generator
for $\Gamma '_j$.

\subsubsection{Fourier expansions}\label{sec:fourier-expansions-new-stuff}
We now turn to explicating the Fourier expansion
of $|f|^2$ at the cusp $\mathfrak{a}_j$,
or equivalently
that of $|f|^2(z)$ regarded as a function of
the variable $z_j$.
Recall the weight $k$ slash operation:
for $\alpha \in \GL_2^+(\mathbb{R})$,
set $f|_k \alpha(z) = \det(\alpha)^{k/2}
j(\alpha,z)^{-k} f(\alpha z)$,
where $j \left( \left[
    \begin{smallmatrix}
      a&b\\
      c&d
    \end{smallmatrix}
  \right], z  \right) = c z + d$.
We then have
$|f|^2(z) y^k = |f|^2(\sigma_j z_j) \Im(\sigma_j z_j)^k
= \left\lvert f |_k \sigma_j \right\rvert^2(z_j)
y_j^k$,
and may write
\begin{equation}\label{eq:f-slashed-fourier-exp}
  f|_k \sigma_j(z_j)
  = y_j^{-k/2} \sum_{n \in \mathbb{N}}
  \frac{\lambda_j(n)}{\sqrt{n}}
  \kappa(n y_j) e(n x_j)
\end{equation}
for $\kappa(y) = y^{k/2} e^{-2 \pi y}$
($y \in \mathbb{R}^\times_+$)
and some coefficients $\lambda_j(n) \in \mathbb{C}$. In the special case,
$\mathfrak{a}_j = \infty$, we note that $\lambda_j(n) =
\lambda(n)$.
In general, the notation $\lambda_j(n)$ is slightly
misleading because $\lambda_j(n)$ depends not only on
the cusp $\mathfrak{a}_j$, but also on the choice of scaling
matrix $\tau_j$.
However, if $\lambda_j'(n)$ denotes the coefficient
obtained by a different choice $\tau_j'$,
then one has $\lambda_j'(n) = e(b n/w_j) \lambda_j(n)$
for some integer $b$.

The coefficients $\lambda_j(n)$
seem easiest to describe
by working adelically. For background on adeles and adelization of automorphic forms, we refer the reader to~\cite{MR0379375}. We recall the following notation from
Section~\ref{sec:local-calculations}:
\[
w =  \mat{0}{1}{-1}{0},
\quad
a(y) = \mat{y}{}{}{1},
\quad
n(x) = \mat{1}{x}{}{1},
\quad \text{ and }
z(y) = \mat{y}{}{}{y}.
\]
Let $\hat{\mathbb{Z}} = \varprojlim \mathbb{Z}/n = \prod
\mathbb{Z}_p$,
$\hat{\mathbb{Q}} = \hat{\mathbb{Z}} \otimes_\mathbb{Z}
\mathbb{Q} = \prod '\mathbb{Q}_p$
and $\mathbb{A} = \mathbb{R} \times \hat{\mathbb{Q}}$.
To $f$ one attaches a function
$F : \GL_2(\mathbb{A}) \rightarrow \mathbb{C}$
in the following standard way.
By strong approximation,
every element of $\GL_2(\mathbb{A})$
may be expressed in the form
$\gamma g_\infty \kappa_0$
for some $\gamma \in \GL_2(\mathbb{Q})$,
$g_\infty \in \GL_2(\mathbb{R})^+$
and $\kappa_0 \in K_0(q) = \{\left[
  \begin{smallmatrix}
    a&b\\
    c&d
  \end{smallmatrix}
\right] \in \GL_2(\hat{\mathbb{Z}}) : q \mid c \}$.
Then
$F(\gamma g_\infty \kappa_0)
= f|_kg_\infty(i)$.
Recall that $\sigma_j \in \GL_2(\mathbb{Q})^+$.
Let $i_\infty : \GL_2(\mathbb{Q}) \hookrightarrow \GL_2(\mathbb{R})
\hookrightarrow \GL_2(\mathbb{A})$
and $i_{\fin} :  \GL_2(\mathbb{Q}) \hookrightarrow \GL_2(\hat{\mathbb{Q}})
\hookrightarrow \GL_2(\mathbb{A})$
be the natural inclusions.
If $g_z \in \GL_2(\mathbb{R})^+$ is chosen so that
$g_z i = z$,
then
$f|_k \sigma_j(z) =
f|_k \sigma_j g_z(i)
= F(\iota_\infty(\sigma_j) g_z)
= F(g_z \iota_{\fin}(\sigma_j^{-1}))$
by the left-$G(\mathbb{Q})$-invariance of $F$.
For $g \in \GL_2(\mathbb{A})$,
one has a Fourier expansion
\[
F(g)
= \sum_{n \in \mathbb{Q}_{\neq 0}}
W(a(n) g),
\]
where $W$ is a global Whittaker newform
corresponding to $f$; it is given explicitly by
$W(g) = \int_{x \in \mathbb{A} / \mathbb{Q}} F(n(x) g)
\psi(-x) \, d x$
where the integral is taken with respect to an invariant
probability
measure.
It satisfies $W(n(x) g)
= \psi(x) W(g)$
for all $x \in \mathbb{A}$,
where $0 \neq \psi = \prod \psi_v \in
\Hom(\mathbb{A}/\mathbb{Q},\mathbb{C}^1)$
is the additive character
for which $\psi_\infty(x) = e^{2 \pi i
  x}$.
The function $W$ factors as $\prod W_v$
over the places of $\mathbb{Q}$. We may pin down this
factorization
uniquely by requiring that $W_\infty(a(y)) = \kappa(y)$
and $W_p(1) = 1$ for all primes $p$.
Writing $z = x + i y$,
we may and shall assume that
$g_z = n(x) a(y)$.
Then
\[
y^{k/2} f|_k \sigma_j(z) =
F(g_z \iota _{\fin} (\sigma_j^{-1}))
= \sum_{n \in \mathbb{Q}_{\neq 0}}
\kappa(n y) e(n x)
\prod_p
W_p(a(n) \sigma_j^{-1}).\]
Here we identify $\sigma_j$ with its image
under the natural inclusion $G(\mathbb{Q}) \hookrightarrow
G(\mathbb{Q}_p)$.
If $p \nmid q$,
then $W_p$
is unramified
at $p$
and $\sigma_j \in \GL_2(\mathbb{Z}_p)$,
since $\sigma_j$ differs
from $\tau_j \in \SL_2(\mathbb{Z})$
by a diagonal matrix with integral entries dividing $q$ (and hence with determinant coprime to $p$);
thus $W_p(a(n) \sigma_j^{-1}) = W_p(a(n))$.
If we also have $p \nmid n$, then $a(n) \in
\GL_2(\mathbb{Z}_p)$,
and so $W_p(a(n))=1$.
Therefore
the expansion \eqref{eq:f-slashed-fourier-exp} holds with
\begin{equation} \label{lambdajwhittaker}
  \lambda_j(n) = \sqrt{n} \prod_{p| [n,q]}
  W_p(a(n) \sigma_j^{-1}) = \sqrt{n} \prod_{p| q}
  W_p(a(n) \sigma_j^{-1}) \prod_{p| \frac{n}{(n,q^\infty)}} W_p(a(n)) .
\end{equation}

Let us spell out \eqref{lambdajwhittaker} a bit more precisely.
Write $\tau_j = \left[
  \begin{smallmatrix}
    a&b\\
    c&d
  \end{smallmatrix}
\right]$, so that $\mathfrak{a} =
\mathfrak{a}_{d/c}$
in the notation introduced above.
The Bruhat decomposition of $\tau_j^{-1}$ reads
\[
\tau_j^{-1}
=
\begin{bmatrix}
  d & -b \\
  -c & a
\end{bmatrix}
=
\begin{bmatrix}
  -c &  \\
  & -c
\end{bmatrix}
n(-d/c) a(1/c^2) w n(-a/c),
\]
so that for $y \in \mathbb{Q}_p^{\times} $,
we have
\begin{align*}
  W_p(a(y) \sigma_j^{-1})
  &=
  W_p (a(y/[q/c^2,1]) n(-d/c) a(1/c^2) w n(-a/c)) \\
  &=
  W_p (n(-y d/[q/c,c]) a(y/[q,c^2]) w n(-a/c)) \\
  &=
  \psi _p \left( \frac{- d y}{[q/c,c]} \right)
  W_p ( a(y/[q,c^2]) w n(-a/c)).
\end{align*}
Note also that  $$\prod_{p| \frac{n}{(n,q^\infty)}} W_p(a(n))  = \prod_{p| \frac{n}{(n,q^\infty)}} W_p\left(a\left(\frac{n}{(n,q^\infty)}\right)\right) =  \lambda \left( \frac{n}{(n,q^\infty)} \right). $$ Recall here that $\lambda(m)$ is our notation for
the coefficient $\lambda_j(m)$ at the distinguished cusp $\mathfrak{a}_j = \infty$. From the above calculations, we deduce that
\begin{equation} \label{lambdajwhittaker2}
  \lambda_j(n) = \sqrt{n} \cdot
  e \left( \frac{d n}{[q/c,c]} \right)
  \lambda \left( \frac{n}{(n,q^\infty)} \right)
  \prod_{p| q}
  W_p ( a(n/[q,c^2]) w n(-a/c)).
\end{equation}

One can check that $\lambda_j$ is not multiplicative in general;
for example, it can happen that $\lambda_j(1) \neq 1$,
or even that $\lambda_j(mn) \lambda_j(1) \neq
\lambda_j(m)\lambda_j(n)$
for pairs of coprime integers $m,n$.
To circumvent
this lack of multiplicativity,
we work with the root-mean-square
of $\lambda_j$ taken over all cusps of a given denominator.
For each positive divisor $c$ of $q$, define
\begin{equation}\label{lambdaedef}\lambda_{[c]}(n) = \left(
    \frac1{\# \mathcal{C}[c]} \sum_{ \mathfrak{a}_j \in
      \mathcal{C}[c]} |\lambda_{j}(n)| ^2
  \right)^{1/2}.\end{equation}
An explicit formula
in terms of $\GL(2)$ Gauss sums
for the RHS of \eqref{lambdajwhittaker2},
and hence for $\lambda_j(n)$,
may be derived following the method of Section \ref{sec:supp-whitt-newf}.
For our purposes, it suffices
(by Cauchy--Schwarz; see Section \ref{sec:outline-proof})
to evaluate the simpler averages
$\lambda_{[c]}(n)$.
It turns out that these averages
are multiplicative in a certain non-conventional sense:

\begin{definition}
  Let us call an arithmetic function $f: \N \rightarrow \C$
  \emph{factorizable} if it can be written as a product
  $f = \prod_pf_p$
  over the primes,
  where the $f_p :\N \rightarrow \C$ satisfy
  \begin{enumerate}
  \item $f_p(n) = f_p(n_p)$ for all $n \in \N$ and all $p$,\footnote{Recall that $n_p$ denote the
      largest divisor of $n$ that is a power of $p$.}
    and
  \item $f_p(1) = 1$ for all but finitely many $p$.
  \end{enumerate}
\end{definition}
\begin{remark}
  Every multiplicative\footnote{Recall that an arithmetic
    function $f$ is multiplicative if  $f(mn) = f(m)f(n)$
    whenever $(m,n)=1$.} function is factorizable
  (take $f_p(n) = f(n_p)$),
  and every factorizable function $f$  satisfies
  \begin{equation}
    \label{weakmult}f(mn)f(1) = f(m)f(n) \text{
      whenever } (m, n)=1,
  \end{equation}
  but neither of these implications is reversible.
  A factorizable function $f$ is multiplicative
  if and only if $f(1) =1$.
  Many non-factorizable functions $f$ satisfy
  \eqref{weakmult},
  but
  if $f(1) \neq 0$, then $f$ is factorizable
  if and only if it satisfies \eqref{weakmult},
  in which case $n \mapsto f(n)/f(1)$ is multiplicative.
\end{remark}


\begin{lemma}
  \label{lem:factorize-fourier-coeff}
  Let $c$ be a positive divisor of $q$.
  The function $n \mapsto \lambda _{[c]}(n)$ is factorizable:
  $$\lambda _{[c]}(n)
  = \prod_{p} \lambda_{[c],p}(n)$$
  for each $n \in \mathbb{N}$,
  where
  $\lambda_{[c],p} : \mathbb{N} \rightarrow
  \mathbb{R}_{\geq 0}$
  is defined by
  \begin{equation}\label{lambdaewhittaker}
    \lambda_{[c],p}(n) = \begin{cases}
      |\lambda(n_p)|
      = |n|_p^{-1/2} | W_p|(a(n))
      & p \nmid q, \\
      |n|_p^{-1/2} \left(
        \int _{u \in \mathbb{Z}_p ^\times }
        |W _p|^2
        \left(
          a \left( \frac{u n}{[q,c^2]} \right)
          w n (1/c)
        \right)
        \, d^\times u
      \right)^{1/2} & p \mid q.
    \end{cases}
  \end{equation}
\end{lemma}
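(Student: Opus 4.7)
\textbf{The plan} is to square \eqref{lambdajwhittaker2} and average over $\mathfrak{a}_j \in \mathcal{C}[c]$, showing that the result factors across primes. The factor $\lambda(n/(n,q^\infty))$ together with multiplicativity of $\lambda$ produces exactly $\prod_{p \nmid q}|\lambda(n_p)|^2 = \prod_{p \nmid q} \lambda_{[c],p}(n)^2$, so the remaining task is to establish, at each prime $p \mid q$, the local identity
\[
\tfrac{1}{\#\mathcal{C}_p[p^{v_p(c)}]} \sum_{\mathfrak{a}} \big|W_p\big(a(n/[q,c^2])\, w\, n(-a/c)\big)\big|^2 = \int_{u \in \Z_p^\times} \big|W_p\big(a(un/[q,c^2])\, w\, n(1/c)\big)\big|^2\, d^\times u,
\]
where $\mathfrak{a}$ ranges over the local cusps of $\Gamma_0(p^{v_p(q)})$ of denominator $p^{v_p(c)}$. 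The decoupling of the global average into such local averages will come from strong approximation for $\mathbb{P}^1(\Q)$, which gives the bijection $\mathcal{C}[c] \cong \prod_{p \mid q} \mathcal{C}_p[p^{v_p(c)}]$, and the standard bookkeeping $(n, q^\infty) = \prod_{p\mid q} n_p$ absorbs the remaining powers of $p$ to match the defining factor $|n|_p^{-1}$ in $\lambda_{[c],p}(n)^2$.

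The key algebraic input for the local identity is the relation $W_p(a(y) w\, n(-u/c')) = W_p(a(yu^{-1}) w\, n(-1/c'))$, valid for all $u \in \Z_p^\times$ and $c' \in \Q^\times$. I will derive this from the matrix calculation $a(yu^{-1}) w\, n(-1/c') = a(y) w\, n(-u/c') \cdot a(u) z(u^{-1})$, combined with the triviality of $\omega_{\pi_p}$ and the right-invariance of the newform $W_p$ under the local congruence subgroup (into which $a(u) = \mathrm{diag}(u,1)$ falls for $u \in \Z_p^\times$). Applying this identity on each side of the target local identity, together with a Haar-invariant substitution to align the integration variables, reduces the problem to verifying that the function $u \mapsto |W_p(a(uy_0) w\, n(-1/p^{v_p(c)}))|^2$, with $y_0 = n/[q,c^2]$, is invariant under $u \mapsto uv$ for $v \in 1 + p^{\min(v_p(c), v_p(q/c))}\Z_p$; granted this, the Haar integral over $\Z_p^\times$ coincides with the uniform finite average.

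The invariance claim is the main obstacle I expect. The plan is to attack it via the explicit Fourier expansion of the Whittaker function from \eqref{eq:15new}: writing $W_p(a(y_0) w\, n(-a/c)) = \sum_\mu \mu(y_0)\, \eps(\pi_p \mu)\, G(-a/c, \mu)$ and using the Gauss-sum identity $G(-a/c, \mu) = \mu(a^{-1})\mu(c/p^{v_p(c)}) G(-1/p^{v_p(c)}, \mu)$ (a Haar-invariant substitution in the defining integral of $G$), the $a$-dependence becomes a pure character sum $\sum_\mu \bar\mu(a)\, c_\mu$, and orthogonality of characters of $\Z_p^\times$ identifies both averages with the same diagonal sum $\sum_\mu |c_\mu|^2$. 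The range $v_p(c) \leq v_p(q)/2$ is immediate since all relevant $\mu$ have conductor exactly $p^{v_p(c)}$, matching the averaging subgroup. For the range $v_p(c) > v_p(q)/2$, where the hypotheses of \eqref{eq:15new} require more care, I will reduce to the former case by the Atkin--Lehner involution $w_{p^{v_p(q)}}$ at $p$: since $\omega_{\pi_p}$ is trivial, one has $W_p(g \cdot w_{p^{v_p(q)}}) = \eps_p W_p(g)$ with $|\eps_p| = 1$, and this involution intertwines denominators $p^{v_p(c)}$ and $p^{v_p(q) - v_p(c)}$, preserving $|W_p|^2$ throughout and thus completing the argument by symmetry.
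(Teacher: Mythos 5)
Your overall skeleton --- square \eqref{lambdajwhittaker2}, kill the phase, use CRT on $d \in (\Z/(c,q/c))^\times$ to split the cusp average into local averages, and then show each finite local average equals the Haar integral over $\Z_p^\times$ --- is exactly what the paper's one-line justification (``the Chinese remainder theorem and the right-$a(\Z_p^\times)$-invariance of $W$'') intends, and your matrix identity $a(yu^{-1})wn(-1/c') = a(y)wn(-u/c')a(u)z(u^{-1})$ together with trivial central character and right $a(\Z_p^\times)$-invariance is the correct mechanism. You have also correctly isolated the crux: invariance of $u \mapsto |W_p(a(uy_0)wn(-1/c))|^2$ under $1+p^{m}\Z_p$ with $m=\min(v_p(c),v_p(q/c))$.

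The gap is in how you establish that invariance when $v_p(c)=v_p(q)/2$, i.e.\ at the middle cusps $c_p^2=q_p$ --- which by Proposition \ref{keypropositionfourier} are the only cusps where anything nontrivial happens. You place this case in the ``immediate via \eqref{eq:15new}'' bucket, but \eqref{eq:15new} is asserted only under the hypothesis of Lemma \ref{keywhittakersupportlemma}, namely $|x|^2 < \max(p^n,|y|)$; with $|x|=p^{v_p(c)}=p^{n/2}$ this forces $|y|>p^n$, where $W$ vanishes identically, so the expansion tells you nothing about the support. Your fallback, the Atkin--Lehner involution, is vacuous here since it fixes the middle denominator. The claim is nevertheless true, and the clean fix (covering all $v_p(c)\le v_p(q/c)$ uniformly, boundary included) avoids the Fourier expansion entirely: for $v\in 1+p^{v_p(c)}\Z_p$ one has $a(yv)wn(-1/c) = z(v)\,a(y)\,w\,n(-v^{-1}/c)\,a(v^{-1})$ and $n(-v^{-1}/c)=n(-1/c)\,n((1-v^{-1})/c)$ with $(1-v^{-1})/c\in\Z_p$, so the invariance follows from right-invariance of $W_p$ under $a(\Z_p^\times)$ and $N(\Z_p)\subset K_0(\p^{v_p(q)})$ and triviality of the central character. (Alternatively, one can rerun your Gauss-sum argument after observing that the only input needed is $G(x,\mu)\neq 0\Rightarrow C(\mu)\le|x|$, which holds irrespective of the hypotheses of \eqref{eq:15new}; but then you must re-derive the Fourier expansion in the middle-cusp case rather than cite the remark.) For $v_p(c)>v_p(q)/2$ your Atkin--Lehner reduction is genuinely needed and works as you describe, since inversion and negation preserve both the finite average and the Haar measure.
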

\begin{proof}
  For each $\mathfrak{a}_j \in \mathcal{C}[c]$,
  let us write
  $\tau_j = \left(
    \begin{smallmatrix}
      a_j &*\\
      c& *
    \end{smallmatrix}
  \right)$.
  Recall that as $\mathfrak{a}_j$ traverses $\mathcal{C}[c]$,
  the lower-right entry of $\tau_j$ traverses
  $(\mathbb{Z}/(c,q/c))^\times$,
  hence so does the upper-left entry $a_j$.
  The formula \eqref{lambdajwhittaker2}
  and the definition \eqref{lambdaedef} imply that
  \begin{equation}\label{eq:4}
    \lambda _{[c]}(n) =  n^{1/2}\left|\lambda \left( \frac{n}{(n,q^\infty)} \right)\right| \left(
      \frac1{\# \mathcal{C}[c]} \sum_{ \mathfrak{a}_j \in
        \mathcal{C}[c]} \prod_{p|q} |W_p|^2 ( a(n/[q,c^2]) w n(-a_j/c))
    \right)^{1/2}.
  \end{equation}
  We treat the three factors on the RHS successively;
  in doing so, we shall repeatedly
  invoke
  the right-$a(\mathbb{Z}_p^\times)$-invariance of $W_p$
  for each prime $p$.
  The first factor may be written $n^{1/2} = \prod_p |n|_p^{-1/2}$.
  The second is
  $\left|\lambda \left( \frac{n}{(n,q^\infty)} \right)\right| =
  \prod_{p \nmid q} |W_p|(a(n))$.
  For the third,
  note that the average
  over $\mathcal{C}[c]$, or equivalently,
  over $a_j \in (\mathbb{Z}/(c,q/c))^\times$,
  lifts to an Eulerian integral over $\prod_{p \mid q}
  \mathbb{Z}_p^\times$:
  \[
  \frac1{\# \mathcal{C}[c]} \sum_{ \mathfrak{a}_j \in
    \mathcal{C}[c]} \prod_{p|q} |W_p|^2 ( a(n/[q,c^2]) w
  n(-a_j/c))
  =
  \prod_{p|q}
  \int _{u \in \mathbb{Z}_p^\times }
  | W_p|^2 ( a(n/[q,c^2]) w
  n(-u/c)) \, d^\times u.
  \]
  The identity
  $w n(-u/c)
  \equiv a(-1/u) w n(1/c) \pmod{Z(\mathbb{Q}_p)}$
  and substitution $u \mapsto -1/u$
  allows us to rewrite the above as
  \[
  \prod_{p|q}
  \int _{u \in \mathbb{Z}_p^\times }
  | W_p|^2 ( a \left( \frac{u n}{[q, c^2]} \right) w
  n(1/c)) \, d^\times u.
  \]
  Collecting the identities
  obtained  for each
  of the three factors in \eqref{eq:4}, we deduce
  \[
  \lambda_{[c]}(n)
  = \prod_p \left(
  |n|_p^{-1/2}
  \times
  \begin{cases}
    |W_p|(a(n)) & p \nmid q \\
    \left(

      \int _{u \in \mathbb{Z}_p^\times }
      | W_p |^2( a \left( \frac{u n}{[q, c^2]} \right) w
      n(1/c)) \, d^\times u
    \right)^{1/2}
    & p \mid q \\
  \end{cases}\right).
  \]
    This establishes the claimed formula $\lambda _{[c]}(n)
    = \prod_{p} \lambda_{[c],p}(n)$.
    It is clear from the definition
    that $\lambda_{[c],p}(n) = \lambda_{[c],p}(n_p)$
    for all $p$
    and that $\lambda_{[c],p}(1) = 1$ for all $p$ not dividing $q$.
  \end{proof}

  \begin{remark}
    \label{rmk:lambda-c-factorization}
    It follows from the right-$a(\mathbb{Z}_p^\times)$-invariance
    of $W$
    that
    $\lambda_{[c],p}(n)
    = \lambda_{[c_p],p}(n_p)$.
  \end{remark}

  \begin{lemma}
    \label{lem:lambda-atkin-lehner-duh}
    For each prime $p$, each $c|q$ and each $n \in \mathbb{N}$,
    we have $\lambda_{[c],p}(n) = \lambda_{[q/c],p}(n)$.
  \end{lemma}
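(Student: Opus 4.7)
If $p \nmid q$, then $\lambda_{[c],p}(n) = |\lambda(n_p)|$ has no dependence on $c$, so the equality is trivial. Suppose henceforth that $p \mid q$, and set $m := \val_p(q)$. The plan is to exploit the local Atkin--Lehner involution at $p$, namely $\eta_m := \mat{0}{1}{-p^m}{0}$. One checks directly that $\eta_m$ normalizes $K_0(\p^m)$ and that $\eta_m^2$ is a central element acting trivially (since $\omega_{\pi_p}$ is trivial). Combined with the one-dimensionality of the newform subspace, this forces $\pi_p(\eta_m) W_p = \epsilon_p W_p$ for some $\epsilon_p \in \{\pm 1\}$; in particular, $|W_p(g\eta_m)| = |W_p(g)|$ for all $g \in \GL_2(F)$.

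The main step is the matrix identity
\[
a(\alpha) w n(1/c) \eta_m = z(-p^m/c) \cdot n(-\alpha c) \cdot a(\alpha c^2/p^m) \cdot w \cdot n(-c/p^m),
\]
verified by direct multiplication. Applying $W_p$ to both sides, using the trivial central character to absorb the $z$-factor and the Whittaker relation $W_p(n(x) g) = \psi(x) W_p(g)$ to strip the $n$-factor, and then passing to absolute values, yields
\[
|W_p(a(\alpha) w n(1/c))|^2 = |W_p(a(\alpha c^2/p^m) w n(-c/p^m))|^2.
\]
To convert $n(-c/p^m)$ into $n(c/q)$, observe that these differ by the $p$-adic unit $u_0 := -q/p^m \in \Z_p^\times$; the elementary identity $a(\alpha) w n(\beta) a(u_0) = z(u_0) a(\alpha/u_0) w n(\beta/u_0)$, combined with trivial central character and right $K_0(\p^m)$-invariance of $W_p$ (absorbing $a(u_0)$), gives $W_p(a(\alpha) w n(\beta)) = W_p(a(\alpha/u_0) w n(\beta/u_0))$, which simultaneously transforms $n(-c/p^m)$ into $n(c/q)$ and rescales $\alpha$ by $u_0^{-1}$.

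Substituting $\alpha = un/[q,c^2]$ and comparing the resulting $a$-entry with $un/[q,(q/c)^2]$, a short case analysis (depending on whether $\val_p(c)$ is at most or at least $m/2$) shows that they differ only by a $u$-independent unit in $\Z_p^\times$, which is absorbed via a change of variables in the multiplicative Haar measure $d^\times u$. Since the normalizing prefactor $|n|_p^{-1/2}$ is common to both sides, this yields $\lambda_{[c],p}(n) = \lambda_{[q/c],p}(n)$. The main obstacle is the bookkeeping of $p$-adic valuations and unit factors in this final step; once one verifies the identity $\max(m, 2v) - \max(m, 2(m-v)) = 2v - m$ (with $v := \val_p(c)$), which guarantees the matching of $p$-valuations on the two sides, everything else is routine.
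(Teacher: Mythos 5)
Your proof is correct and follows essentially the same route as the paper's: both rest on the Atkin--Lehner involution fixing the newvector up to sign together with an explicit Bruhat-type matrix identity, your $\eta_m = \mat{0}{1}{-p^m}{0}$ differing from the paper's $w_q$ only by the factor $a(q/p^m)$ with $q/p^m \in \Z_p^\times$, which accounts for your extra unit-absorption step. The only substantive additions are that you justify the eigenvalue $\pm 1$ (normalization of $K_0(\p^m)$, centrality of $\eta_m^2$, one-dimensionality of the newform line) where the paper merely asserts it, and that your concluding ``case analysis'' is really just the identity $q\,[q,c^2] = c^2\,[q,(q/c)^2]$, which holds exactly.
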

  \begin{proof}
    Let $w_q = \left[\begin{smallmatrix}0&1\\ - q
        &0 \end{smallmatrix}\right]$.
    Then $w_q$ acts as the Atkin--Lehner operator on the newvector $W_p$,
    and so $W_p(gw_q) = \pm W_p(g)$ for all $g \in \GL_2(\Q_p)$.
    Since
    \[
    a \left( \frac{y}{[q,c^2]} \right)
    w n(1/c) w_q
    =
    z \left( \frac{q}{c} \right)
    n(-u)
    a \left( \frac{- y}{[q,(q/c)^2]} \right)
    w n(1/(q/c))
    a(-1)
    \]
    for each $y \in \mathbb{Q}_p^\times$,
    the lemma follows
    from the left-$Z(\mathbb{Q}_p) N(\mathbb{Q}_p)$-equivariance
    and right-$A(\mathbb{Z}_p)$-invariance
    of $W_p$.
  \end{proof}

  \begin{remark}
    \label{rmk:dfdfsadfs}
    When $q$ is a prime power,
    the classical content of the proof of the above lemma
    is that
    for $a d \equiv 1 \pod{q}$,
    the operator
    $z \mapsto -1/(q z)$
    takes $a/c$ to $-a^{-1} / (q c^{-1}) \equiv -d/(q c^{-1})
    \pmod{\mathbb{Z}}$.
  \end{remark}

  We are now in a position to compute
  $\lambda_{[c],p}$ exactly. We do this in Proposition~\ref{keypropositionfourier}. The  quantities $R_{m,p}$ that appear in the statement below are the coefficients ``$R_m$'' that were defined in~\eqref{defrm} and later  computed exactly\footnote{We wrote down exact formulas only for $T_m$ but similar ones for $ R_m$ can be easily worked out using Table~\ref{table1}.} for all representations of $\PGL_2(\Q_p)$ with conductor at least $p^2$.

  \begin{proposition}\label{keypropositionfourier} Let $c$ be a
    positive divisor of $q$,
    $p$ a prime divisor of $q$,
    and $n$ a natural number.
    Write $n = u p^k$ with $(u,p) = 1$ and $k \geq 0$.
    \begin{enumerate}
    \item $\lambda_{[c],p}(n) = \lambda_{[c],p}(p^k)$.

    \item If $p^2$ does not divide $q$, then
      $\lambda_{[c],p}(p^k) = p^{-k/2}$.
    \item If $p^2$ divides $q$ and $c_p^2 \neq q_p$,
      then
      $\lambda_{[c],p}(p^k) = 1$ if $k = 0$ and vanishes otherwise.
    \item If $p^2$ divides $q$  and $c_p^2 = q_p$,
      then
      $$ \lambda_{[c],p}(p^k) ^2= \begin{cases}\left(\frac{1 + p^{-1}}{1 - p^{-1} }\right) q_p^\frac12 R_{-k, p} & \text{ if } k>0\\  \left(\frac{1 + p^{-1}}{1 - p^{-1} }\right) \left(q_p^\frac12 R_{0, p} - \frac1{p+1}\right) & \text{ if } k=0  . \end{cases}$$

    \end{enumerate}

  \end{proposition}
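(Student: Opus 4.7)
My plan is to treat the four claims in sequence, relying on the Whittaker identities and support results developed in Section~\ref{sec:local-calculations}.

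For part (1), writing $n = u_0 p^k$ with $u_0 \in \Z_p^\times$, the substitution $u \mapsto u u_0$ in the integral~\eqref{lambdaewhittaker} is measure-preserving on $\Z_p^\times$ and transforms $un$ into $u p^k$, so $\lambda_{[c],p}(n)$ depends on $n$ only through $|n|_p = p^{-k}$. For part (2), $\pi_p$ is the Steinberg representation or its unramified quadratic twist; Lemma~\ref{lem:lambda-atkin-lehner-duh} reduces to $v_p(c) = 0$, so $n(1/c) \in K_0(\p)$ and $W_p(a(y) w n(1/c)) = W_p(a(y) w)$. The Atkin--Lehner relation $W_p(a(y) w) = \pm W_p(a(py))$ together with the standard formula $W_p(a(p^j)) = \pm p^{-j}$ for $j \ge 0$ (and zero otherwise) yields $|W_p(a(u p^{k-1}) w)|^2 = p^{-2k}$, whence $\lambda_{[c],p}(p^k)^2 = p^k \cdot p^{-2k} = p^{-k}$.

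For part (3), setting $n_0 := v_p(q) = a(\pi_p)$, Lemma~\ref{lem:lambda-atkin-lehner-duh} reduces to $2v_p(c) < n_0$, so $[q,c^2]_p = p^{n_0}$ and $|1/c|_p^2 < p^{n_0}$. Lemma~\ref{keywhittakersupportlemma} forces $W_p(a(up^k/[q,c^2]) w n(1/c)) = 0$ unless $|up^k/[q,c^2]|_p = p^{n_0}$, i.e., unless $k = 0$, giving the vanishing for $k \ge 1$. For $k = 0$, the unit integration parametrizes precisely the level set $\{|y|_p = p^{n_0}\}$ under $y = u/[q,c^2]$; the support lemma extends the integral to all of $F^\times$, and $\GL_2(F)$-invariance of the Kirillov pairing together with $\|W_p\|^2 = 1$ yields $\lambda_{[c],p}(1)^2 = 1$.

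For part (4), with $c_p^2 = q_p$ and $|1/c|_p = p^{n_0/2}$, the integral defining $\lambda_{[c],p}(p^k)^2$ coincides (after the trivial averaging over the unit part of $1/c$ justified by part (1)) with the $|x|_p = p^{n_0/2}$, $|y|_p = p^{n_0-k}$ slice of the Bruhat formula~\eqref{RS-integral-defn3} for $J_p(s) = \sum_m R_{m,p}\,p^{ms}$. Extracting the coefficient of $p^{-ks}$ identifies $\lambda_{[c],p}(p^k)^2$ with $\frac{1+p^{-1}}{1-p^{-1}} q_p^{1/2}$ times this Bruhat slice of $R_{-k,p}$; the heart of the argument is to show this slice exhausts $R_{-k,p}$ for $k > 0$ and contributes $R_{0,p} - p^{-n_0/2}/(p+1)$ when $k = 0$, producing the stated correction. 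I dispose of the other Bruhat regions by a case analysis on $\alpha$, defined by $|x|_p = p^\alpha$: the ranges $\alpha \le 0$ (where right $K_0(\p^{n_0})$-invariance reduces to $W_p(a(y) w) = \pm W_p(a(p^{n_0} y))$, which vanishes since $W_p(a(\cdot))$ is supported on $|\cdot|_p = 1$ for $n_0 \ge 2$) and $0 < \alpha < n_0/2$ (ruled out by Lemma~\ref{keywhittakersupportlemma}) contribute nothing; for $\alpha \ge n_0$ the Iwasawa-type decomposition $w n(x) = n(-1/x) z(-x) a(1/x^2) \mat{1}{0}{1/x}{1}$ (with the lower unipotent in $K_0(\p^{n_0})$) reduces the integrand to $|W_p(a(y/x^2))|^2$, which contributes only when $|y/x^2|_p = 1$, i.e., $k = 0$; and the intermediate range $n_0/2 < \alpha < n_0$ is handled by a refinement of~\eqref{eq:15new} together with the conductor formula $C(\pi_p \mu) = \max(p^{n_0}, C(\mu)^2)$ and the Gauss-sum support $G(x, \mu) = 0$ unless $|x|_p = C(\mu)$, which forces the sum to vanish for $k > 0$ and produces an explicit contribution for $k = 0$. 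The main obstacle is this intermediate range, where neither the support lemma nor $K_0$-invariance applies directly; summing the $k = 0$ contributions from the $\alpha \ge n_0$ and intermediate regions via geometric series and Plancherel on $\OF^\times$ to recover exactly $p^{-n_0/2}/(p+1)$ is the secondary bookkeeping task.
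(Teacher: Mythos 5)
Your proposal is correct, and for parts (1)--(3) it coincides with the paper's proof: (1) is the substitution in the unit integral, (2) is the Steinberg/Atkin--Lehner computation, and (3) is exactly the paper's combination of Lemma~\ref{lem:lambda-atkin-lehner-duh}, Lemma~\ref{keywhittakersupportlemma}, and unitarity of the Whittaker pairing. For part (4) your skeleton is also the paper's: expand $R_{-k,p}$ over Bruhat slices indexed by $|x|_p = p^{\alpha}$, kill $\alpha < n_0/2$ by the support lemma, identify the $\alpha = n_0/2$ slice with $(1+p^{-1})^{-1}(1-p^{-1})q_p^{-1/2}\lambda_{[c],p}(p^k)^2$, and sum the remaining slices into the $k=0$ correction $q_p^{1/2}/(p+1)$. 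Where you diverge is in disposing of the range $\alpha > n_0/2$: the paper simply recycles part (3) (each such slice is $\lambda_{[c'],p}(p^k)^2 p^{-k} = \delta_{k0}$ for the cusp of denominator $p^{\alpha}$, via the Atkin--Lehner symmetry), whereas you run a direct case analysis -- the explicit decomposition $wn(x) = n(-1/x)z(-x)a(1/x^2)\left[\begin{smallmatrix}1&0\\1/x&1\end{smallmatrix}\right]$ for $\alpha \ge n_0$, and the $\eps$-factor/Gauss-sum argument of~\eqref{eq:15new} plus conductor stability for $n_0/2 < \alpha < n_0$. Both work; the paper's route is shorter, while yours is more self-contained and in fact explicitly covers the slices with $\alpha > n_0$, where $p^{\alpha}$ is no longer a divisor of $q_p$ and the appeal to part (3) is not literal. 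Two small cautions: your statement ``$G(x,\mu)=0$ unless $|x|_p = C(\mu)$'' fails for unramified $\mu$ when $|x|_p \le p$, but is harmless since in your intermediate range $\alpha \ge n_0/2 + 1 \ge 2$; and the ``averaging over the unit part of $1/c$'' needs the conjugation identity $W(a(y)wn(x)a(v)) = W(a(y/v)wn(x/v))$ for $v \in \Z_p^\times$ (right $a(\Z_p^\times)$-invariance plus trivial central character) rather than part (1) verbatim, though this is routine.
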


  By the formulas for $R_{-k,p}$ from
  Section~\ref{sec:local-calculations},
  we deduce immediately:
  \begin{corollary}\label{corollaryfourierbound}
    For each prime $p$ for which $p ^2 $ divides $q$,
    each positive divisor $c$ of $q$, and each nonnegative integer $k$,
    we have $$\lambda_{[c],p}(p^k) \ll p^{k/4}$$ with an absolute implied constant.
  \end{corollary}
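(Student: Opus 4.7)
The plan is to deduce the corollary from Proposition \ref{keypropositionfourier} by establishing, in the one nontrivial case, the quantitative estimate $R_{-k, p} \ll p^{k/2}/q_p^{1/2}$ with an absolute implied constant. Parts (2) and (3) of Proposition \ref{keypropositionfourier} immediately give $\lambda_{[c],p}(p^k) \leq 1$. In part (4) with $k = 0$, the parenthesized quantity is bounded by an absolute constant (noting that $R_{0,p}$ is $O(q_p^{-1/2})$ by the functional equation together with Proposition \ref{propRm}). For part (4) with $k \geq 1$ we have $\lambda_{[c],p}(p^k)^2 \asymp q_p^{1/2} R_{-k, p}$ up to an absolute constant, so the corollary reduces to the stated bound on $R_{-k, p}$.

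To bound $R_{-k, p}$ I would first invert the linear relation between $T_m$ and $R_m$ recorded in the table of Section \ref{sec:proofs}. Writing the shift $R_m \mapsto R_{m+1}$ as an operator $S$, that relation has the form $T_m = g(S) R_m$, where $g$ is the Laurent series in $S$ corresponding to $L_p(\pi_p \times \pi_p, s)^{-1} \zeta_p(2s)$, i.e., a rational function of $S$ of bounded degree. Inverting gives $R_m = \sum_{j \geq 0} c_j T_{m+j}$ with $|c_j|$ bounded by an absolute constant: for instance $R_m = T_m$ (Type 1), $R_m = T_m + T_{m+1}$ (Type 2), $R_m = T_m + 2 T_{m+1} + 2 T_{m+2} + \dotsb$ (Type 4), and analogous short expansions for Types 3 and 5, using the classical Ramanujan-type bound $|\Re(s_0)| \leq 1/4$ to keep $\beta_{s_0} = O(1)$ in Type 3.

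Next I would apply the $\GL(2) \times \GL(2)$ local functional equation \eqref{keyequation} in the form $T_{-k+j} = p^{N/2 + k - j}\, T_{N + k - j}$, translating every term of $\sum_j c_j T_{-k+j}$ into a $T_m$ with $m \geq 0$, where the explicit formulas following Proposition \ref{propRm} apply. Those formulas yield the uniform bound $|T_m| \ll p^{-(n+m)/2}$ on the support of $T$, whence each summand contributes at most $|c_j|\, p^{(k - j - n)/2}$. Summing the resulting geometric series in $p^{-j/2}$ gives $|R_{-k, p}| \ll p^{(k-n)/2} = p^{k/2}/q_p^{1/2}$ with an absolute implied constant, as required.

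The main (small) obstacle is carrying out this verification uniformly across Types 1--5, in particular for the Type 3 formula at $p=2$ with its several extra terms, and checking that the parity and support restrictions which force various $T_m$ to vanish propagate correctly through the inversion. No additional analytic input is needed beyond Section \ref{sec:local-calculations}; the argument is a case-by-case bookkeeping exercise, and the inequality $|T_m| \ll p^{-(n+m)/2}$ is visibly saturated (up to a constant depending only on the type) at $m = n$, so the bound is essentially sharp.
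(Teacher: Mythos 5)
Your reduction of the corollary to the estimate $R_{-k,p}\ll p^{k/2}q_p^{-1/2}$ is correct, and your argument (invert the table relating $T_m$ and $R_m$, transport negative indices to positive ones via \eqref{keyequation}, and use the uniform bound $|T_m|\ll p^{-(n+m)/2}$ on the support of $T$) does go through for Types 1, 2, 4 and 5, where the Taylor coefficients of $L(\pi\times\pi,s)\zeta_p(2s)^{-1}$ and of its reciprocal in $u=p^{-s}$ really are absolutely bounded. This is essentially the paper's route, which reads the bound off the explicit formulas for $T_m$ (hence $R_m$).

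The gap is in Type 3, which is exactly the case responsible for the exponent $1/4$. Two of your assertions there are false. First, $\beta_{s_0}=(p^{s_0}+p^{-s_0})^2$ is \emph{not} $O(1)$ under $|\Re(s_0)|\le 1/4$; it can be as large as $4p^{1/2}$ (it is $O(1)$ only when $s_0\in i\mathbb{R}$). Second, the coefficients $c_j$ in $R_m=\sum_{j\ge 0}c_jT_{m+j}$ are the Taylor coefficients of $(1+u)\bigl((1-u)(1-p^{2s_0}u)(1-p^{-2s_0}u)\bigr)^{-1}$, which grow like $(j+1)^2\max(1,p^{2\Re(s_0)})^{j}$ --- a triple near-coincident pole, not a bounded sequence --- so $\sum_j |c_j|\,p^{(k-j-n)/2}$ is not a convergent geometric series. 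The sum over $j$ is finite only because of the support condition $N-n\le -k+j\le n$, and even then the bound you obtain carries a factor that grows polynomially in $k$: for instance, for $p$ odd and $s_0=0$ one has $c_j=(j+1)^2$ and in fact $R_{-k,p}=\frac{1-p^{-1}}{1+p^{-1}}\bigl((k-1)-(k+1)p^{-1}\bigr)^2\asymp k^2$, whereas your target is $p^{(k-n)/2}=p^{(k-2)/2}$. The conclusion survives only after the additional absorption $(k+1)^2\ll p^{k/2}$ (more generally, $|\lambda_{s,j}|\ll p^{j/4}$ uniformly for $|\Re(s-\onehalfbig)|\le 1/4$, which interpolates between the regimes $\Re(s_0)$ near $0$ and near $1/4$). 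That absorption is the one genuinely non-mechanical step of the proof and is precisely where $p^{k/4}$ enters; as written, your argument replaces it with the two false claims above, so the Type 3 case needs to be redone.
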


  \begin{remark}\label{rmk:failure-of-deligne-bound}
    In general, one cannot hope to improve upon the above
    inequality in the range  $0 \le k \le n-N$ where the integer
    $N$ is such that $p^N = C_p$, the $p$-part of the conductor of
    $f \times f$. This is clear from the formulas for $R_m$ from
    Section~\ref{sec:local-calculations}. In particular, the
    ``Deligne bound"
    $|\lambda_j (p^k)| \leq \tau(p^k)$ does not hold in general.
  \end{remark}

  \begin{proof}[Proof of Proposition~\ref{keypropositionfourier}]
    Part (1) follows immediately from the definition of
    $\lambda_{[c], p}$.
    Part (2) follows from standard formulas for the local
    Whittaker function attached to a Steinberg representation
    (see~\cite[Lemma 2.1]{MR1145805}).

    We now turn to (3) and (4).
    \emph{To simplify notation,
      we restrict henceforth to the
      case that $q$ and $c$ are powers of $p$};
    the general case then follows by the observation of
    Remark \ref{rmk:lambda-c-factorization}.
    The proofs of (3) and (4)
    will each make use of the following consequence
    of
    the support condition on $W_p$ established in
    Lemma \ref{keywhittakersupportlemma}
    and the $\GL_2(\mathbb{Q}_p)$-invariance
    of the Whittaker inner product:
    for each
    $v \in \mathbb{Z}$
    and $x \in \mathbb{Q}_p$ with $|x|^2 < q$,
    we have
    \begin{equation}\label{eq:1}
      \int _{u \in \mathbb{Z}_p^\times }
      |W_p|^2\left(a ({u p^{v}}/{q}) w n(x)\right)
      \, d^\times u
      = \delta_{v}
      := \begin{cases}
        1 & v = 0, \\
        0 & v \neq 0.
      \end{cases}
    \end{equation}

    For part (3), suppose that
    $p^2 | q$ and $c^2 \neq q$.
    By the ``functional equation''
    $\lambda_{[c],q}(p^k)= \lambda_{[q/c],p}(p^k)$
    of Lemma \ref{lem:lambda-atkin-lehner-duh},
    we may assume without loss of generality
    that $c^2$ (properly) divides $q$.
    Then
    $[q,c^2] = q$,
    so
    $$
    \frac{
      \lambda_{[c], p}(p^k)^2
    }
    {
      p^{k}
    }
    =  \int_{\substack{u \in \Z_p^\times  }}
    |W_p|^2\left(a ( {u p^k}/{q} ) w n(1/c)\right)
    d^\times u. $$
    Since $|(1/c)|_p^2 = c^2 < q$,
    the identity \eqref{eq:1}
    implies $\lambda_{[c],p}(p^k)^2 = p^k \delta_k
    = \delta_k$, as desired.

    It remains to consider part (4), in which $c^2 = q$.
    By definition (see \eqref{defrm}),
    $R_{-k,p}$ is the coefficient of $p^{-ks}$ in $J_p(s)$
    (see \eqref{RS-integral-defn3}).
    By writing the $p$-adic integral in \eqref{RS-integral-defn3}
    as a sum
    and invoking the right invariance
    of $W_p$,
    we obtain
    \begin{align*}
      \frac{\zeta_p(1)}{\zeta_p(2)}
      R_{-k,p}
      &=
      \mathop{
        \int _{x \in \mathbb{Q}_p} \int _{y \in
          \mathbb{Q}_p^\times }
      }_{|y|/\max(1,|x|)^2 = p^{-k}}
      |y|^{-1} |W_p|^2 \left(a(y) w n(x)\right) \, d x \, d ^\times y \\
      &=
      \sum_{t=0}^\infty
      v_t p^{k-2t} \int_{u \in \Z_p^\times}
      |W_p|^2\left(a(p^{k-2t})wn(u p^{-t})\right) d^\times u,
    \end{align*}
    where
    $v_0=
    \vol(\mathbb{Z}_p, d x) = 1$
    and $v_t = \text{vol}(p^{-t}
    \Z_p^\times, dx)
    = p^t (1-p^{-1})$ for $t \geq 1$;
    the measures here are normalized as in
    Section~\ref{sec:2-notations}.
    Set $q = p^n$.
    By the right-$a(\mathbb{Z}_p)^\times$-invariance
    of $W_p$, the inner integral may be written as
    \begin{equation}\label{eq:6}
      \int_{u \in \Z_p^\times}
      |W_p|^2\left(a(u p^{k-2t+n}/q)wn(p^{-t})\right) d^\times u.
    \end{equation}
    We consider separately several cases:
    \begin{itemize}
    \item     If $t < n/2$, then $|p^{-t}|_p^2 < q$
      and $k-2t+n > 0$, so \eqref{eq:1} implies that \eqref{eq:6}
      vanishes.
    \item
      If $t > n$, then
      the identity
      $w n(x) \equiv  n(-x^{-1}) a(x^{-2}) n_-(x^{-1})
      \pmod{Z(\mathbb{Q}_p)}$,
      where $n_-(x^{-1}) = \left(
        \begin{smallmatrix}
          1&\\
          x^{-1}&1
        \end{smallmatrix}
      \right)$,
      shows that
      \begin{align*}
        W_p(a(u p^{k-2t+n}/q)wn(p^{-t}))
        &=
        W_p(n( -x^{-1}u p^{k+n}/q) a(u p^{k+n}/q) n_-(p^{t})) \\
        &=
        W_p(a(u p^{k+n}/q))
        = \delta_{k}.
      \end{align*}
      Thus the integral \eqref{eq:6} is $\delta_{k}$.
    \item
      For $n/2 \leq t \leq n$,
      the definition \eqref{lambdaewhittaker}
      specializes to
      \[
      \lambda_{[p^t],p}(p^k)^2
      = p^{k}
      \int _{u \in \mathbb{Z}_p^\times }
      |W_p|^2 \left( a(u p^{k-2t}) w n(p^{-t}) \right) \, d^\times u.
      \]
      This shows that \eqref{eq:6} equals
      $p^{-k} \lambda_{[p^t],p}(p^k)^2$.
      If the lower inequality is strict, i.e., if $t > n/2$,
      then the proof given above of part (3) of the present
      proposition
      shows moreover that $\lambda_{[p^t],p}(p^k)^2 = \delta_k$.
    \item Combining the previous two cases, we see
      for $t > n/2$
      that \eqref{eq:6}
      equals $\delta_k$.
    \end{itemize}
    Collecting together the above calculations,
    we deduce that
    \[
    \frac{\zeta_p(1)}{\zeta_p(2)}R_{-k,p}
    =
    v_{n/2}
    p^{-n}
    \lambda_{[p^{n/2}],p}(p^k)^2
    +
    \delta_k
    \sum_{t > n/2}
    v_t
    p^{-2t}.
    \]
    Rearranging, recalling that
    that $v_t = \frac{p^t}{\zeta_p(1)}$ (for $t \geq 1$),
    and summing some geometric series,
    we arrive at
    \begin{align*}
      \lambda_{[p^{n/2}],p}(p^k)^2
      &=
      \frac{p^n}{
        v_{n/2}
      }
      \frac{\zeta_p(1)}{\zeta_p(2)}R_{-k,p}
      -
      \frac{
        p^n
      }{
        v_{n/2}
      }
      \delta_k
      \sum_{t > n/2}
      v_t
      p^{-2t} \\
      &=
      p^{n/2}
      \frac{\zeta_p(1)^2}{\zeta_p(2)}      R_{-k,p}
      - \delta_k
      p^{n/2}
      \sum_{t>n/2} p^{-t} \\
      &=
      p^{n/2}
      \frac{1+p^{-1}}{1-p^{-1}}
      R_{-k,p}
      - \delta_k
      \frac{p^{-1}}{1-p^{-1}},
    \end{align*}
    which is equivalent to the claimed formula.

  \end{proof}

  \begin{remark}\label{rmk:brunault}
    It is
    instructive
    to apply Proposition~\ref{keypropositionfourier}
    when $f$ is associated to an elliptic  curve
    $E_{/  \Q}$ of conductor $q$.
    In that case, we have  $k=2$ and $\lambda(n) \sqrt{n} \in
    \Z$.
    Since $\mathrm{Aut}(\C)$ acts transitively on the set of cusps of given denominator,
    Proposition~\ref{keypropositionfourier} provides a characterization of the cusps
    at which the differential form $f(z)dz$ vanishes,
    complementing some recent work of Brunault~\cite{brunault}.
    With further work, one may derive from Proposition~\ref{keypropositionfourier} an exact formula
    for the ramification index
    at
    a given cusp
    of the modular parametrization $X_0(q) \rightarrow E$.
    The resulting formula turns out to depend only on the reduction modulo certain powers of $2$ and $3$ of the coefficients of the minimal Weierstrass equation for $E$.
  \end{remark}

  \begin{remark}
    \label{rmk:direct-iwasawa}
    One may extend $\lambda_{[c],p}$ to a function
    on $\mathbb{Q}_p^\times$
    via the formula in its original definition \eqref{lambdaedef},
    and then $\lambda_{[c]} : \mathbb{N} \rightarrow
    \mathbb{R}_{\geq 0}$
    to a function $\lambda_{[c]} : \hat{\mathbb{Q} } ^\times
    \rightarrow \mathbb{R}_{\geq 0}$
    via
    $(y_p)_{p} \mapsto \prod \lambda_{[c],p}(y_p)$.
    Then by directly evaluating $J_p(s)$ in the Iwasawa decomposition,
    one obtains
    \[
    J_f(s)
    = \prod_{p|q} J_p(s)
    = \frac{1}{[\Gamma:\Gamma ']}
    \int _{y \in \prod_{p|q} \mathbb{Q}_p^\times }
    |y|_{\mathbb{A}}^s
    \sum _{c | q} [q/c^2,1]^s \varphi((q/c,c)) \lambda_{[c]}(y)^2
    \, d ^\times y.
    \]
    Suppose now that $q = p^{2 m}$ is a prime power with even
    exponent.
    Then the support condition (by Proposition \ref{keypropositionfourier}) that $\lambda_{[c],p}(p^k) = 0$
    unless $k = 0$ or $c = p^m$
    implies
    \[
    J_f(s)
    =
    \frac{p^{2 m (s-1)}}{1+1/p}
    \sum_{0 \leq t \leq m - 1}
    \frac{\varphi(p^t)}{p^{2 t s}}
    +
    \frac{p ^{- m - 1}}{1+1/p}
    + p^{-m} \frac{1 - 1/p}{1 + 1/p}
    \sum_{k \geq 0}
    \frac{\lambda_{[p^m],p}(p^k)^2}{p^{k s}}.
    \]
    Thus the ``local Lindel\"{o}f bound''
    in the form
    $J_f(s) \ll m p^{-m}$ ($\Re(s) = 1/2$)
    is ``equivalent'' to the estimate
    $
    \sum_{k \geq 0}
    \lambda_{[p^m],p}(p^k)^2/p^{k/2}
    \ll m
    $
    for the sum of the mean squares of the Fourier coefficients
    of $f$ at the cusps of $\Gamma_0(p^{2m})$ with denominator
    $p^m$.
    When the representation $\pi$ of $\PGL_2(\mathbb{Q}_p)$
    generated by $f$ is
    supercuspidal, we note that the identity
    \eqref{eq:5}
    implies the cute formula
    \[
    \frac{
      \sum_{C(\mu)^2 = C(\pi)}
      \ (C(\pi \mu)/C(\pi))^{s-1}
    }
    {
      \sum_{C(\mu)^2 = C(\pi)}
      \ 1
    }
    =
    \sum_{k \geq 0}
    \frac{\lambda_{[p^m],p}(p^k)^2}{p^{k s}}
    \]
    for the ``moments'' of
    $\{\mu : C(\mu)^2 = C(\pi) \} \ni \mu \mapsto C(\pi \mu)$
    (see Section \ref{sec:sketch-proof} for notation).
  \end{remark}

  \subsection{Proof of Theorem \ref{thm:main-refined},
    modulo technicalities}
  \label{sec:outline-proof}
  In this section
  we follow Holowinsky \cite{MR2680498}
  in bounding $D_f(\phi)$
  in terms of shifted convolution sums,
  to which we apply an extension
  (Proposition
  \ref{prop:bounds-shifted-sums-fourier-coefs-generic-cusps})
  of a refinement \cite[Thm 3.10]{PDN-HQUE-LEVEL}
  of his bounds for such sums \cite[Thm 2]{MR2680498}.
  By combining with the bounds obtained in Section
  \ref{sec:bound-d_fphi-terms} and Section \ref{sec:cusps-gamm-four},
  we deduce Theorem \ref{thm:main-refined}.

  Let $Y \geq 1$ be a parameter (to be chosen later),
  and let $h \in C_c^\infty(\mathbb{R}_+^\times)$ be an everywhere
  nonnegative test function with Mellin transform
  $h^\wedge(s) = \int_0^\infty h(y) y^{-s-1} d y$
  such that $h^\wedge(1) = \mu(1)$.
  The proof of
  \cite[Lem 3.4]{PDN-HQUE-LEVEL}
  shows without modification that
  \begin{equation}\label{eq:expand-it-up}
    Y \mu_f(\phi)
    =
    \sum_{\mathfrak{a}_j \in \mathcal{C}} \int_{y_j=0}^{\infty} h(Y w_j y_j)
    \int_{x_j=0}^1 \phi(w_j z_j) |f|^2(z) y^k \, \frac{d x_j \,
      d y_j}{y_j^2}
    + O_\phi(Y^{1/2} \mu_f(1)).
  \end{equation}
  Let \[
  I_\phi(l,n,x)
  = (m n)^{-1/2} \int_{y=0}^\infty
  h(x y) \kappa_\phi(l y) \kappa_f(m y) \kappa_f(n y)
  \frac{d y}{y^2},
  \quad
  m := n + l,
  \]
  where
  $\kappa_\phi$ and $\kappa_f$
  are as in Section \ref{sec:modular-forms-their}.
Write $w_c := [q/c^2, 1]$ for all $c|q$.
By inserting Fourier expansions
and applying
some trivial bounds as in
\cite[Lem 3.8]{PDN-HQUE-LEVEL},
we obtain
\begin{equation}\label{eq:19}
  \begin{split}
    D_f(\phi)
    &=
    \frac{1}{Y \mu_f(1)}
    \sum_{\substack{
        l \in \mathbb{Z}_{\neq 0} \\
        |l| < Y^{1+\eps}
      }
    }
    \frac{\lambda_\phi(l)}{ \sqrt{|l|}}
    \sum_{j}
    \left(
      \sum _{\substack{
          n \in \mathbb{N} \\
          m := n + w_j l \in \mathbb{N}
        }
      }
      \lambda_j(m) \lambda_j(n)
      I_\phi(w_j l,n,Y w_j)
    \right)
    + O_{\phi,\eps}(Y^{-1/2}), \\
    &=
    \frac{1}{Y \mu_f(1)}
    \sum_{\substack{
        l \in \mathbb{Z}_{\neq 0} \\
        |l| < Y^{1+\eps}
      }
    }
    \frac{\lambda_\phi(l)}{ \sqrt{|l|}} \sum_{c|q} I_\phi({w_c} l,n,Y {w_c})  \sum _{\substack{
        n \in \mathbb{N} \\
        m := n + {w_c} l \in \mathbb{N}
      }
    } \left(\sum_{\mathfrak{a}_j \in \mathcal{C}[c]}
      \lambda_j(m) \lambda_j(n) \right)
    + O_{\phi,\eps}(Y^{-1/2}).
  \end{split}
\end{equation}
By Cauchy--Schwarz,
we deduce that
\begin{equation}\label{eq:reduce-to-shifted-sums}
  \begin{split}
    \left\lvert D_f(\phi) \right\rvert
    &\leq
    \frac{1}{Y \mu_f(1)}
    \sum_{\substack{
        l \in \mathbb{Z}_{\neq 0} \\
        |l| < Y^{1+\eps}
      }
    }
    \frac{
      \left\lvert \lambda_\phi(l) \right\rvert
    }{
      \sqrt{|l|}
    }
    \sum_{c|q} \# \mathcal{C}[c]
    \left\lvert
      I_\phi({w_c} l,n,Y {w_c})
    \right\rvert
    \sum _{\substack{
        n \in \mathbb{N} \\
        m := n + {w_c} l \in \mathbb{N}
      }
    }  \lambda_{[c]}(m) \lambda_{[c]}(n)  \\
    &+
    O_{\phi,\eps}(Y^{-1/2}).
  \end{split}
\end{equation}
The weight $I_\phi(w_c l, n, Y w_c)$
essentially restricts the sum to $\max(m,n) \ll Y
k w_c$: indeed, \cite[Lemma 3.12]{PDN-HQUE-LEVEL}
asserts (in slightly different notation) that
\[
I_\phi(l,n,x)
\ll_A
\frac{\Gamma(k-1)}{(4 \pi)^{k-1}}
\cdot \max
\left( 1, \frac{\max(m,n)}{x k} \right) ^{- A}
\]
for every $A > 0$.

In Section \ref{sec:technical-lemmas},
we prove the following:

\begin{proposition}
  \label{prop:bounds-shifted-sums-fourier-coefs-generic-cusps}
  For $l \in \mathbb{Z}_{\neq 0}$, $x \in \mathbb{R}_{\geq 1}$,
  $\eps \in (0,1)$
  and each positive divisor $c$ of $q$, we have
  \[
  \sum _{\substack{
      n \in \mathbb{N} \\
      m := n + l \in \mathbb{N} \\
      \max(m,n) \leq x
    }
  }
  | \lambda_{[c]}(m) \lambda_{[c]}(n) |
  \ll_\eps
  q_\diamond^\eps \log \log(e^e q)^{O(1)}
  \frac{
    x
    \prod_{p
      \leq x}
    ( 1+ 2 |\lambda_f(p)|/p )
  }{ \log(e x)^{2-\eps} }.
  \]
\end{proposition}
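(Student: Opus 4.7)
The plan is to reduce the shifted convolution for the root-mean-square Fourier coefficients $\lambda_{[c]}$ to a family of shifted convolutions for the standard coefficients $\lambda_f$ at the cusp $\infty$, and then to invoke the refined Holowinsky--Soundararajan-type bound \cite[Thm 3.10]{PDN-HQUE-LEVEL}. The essential inputs will be the coarse multiplicativity (Lemma \ref{lem:factorize-fourier-coeff}), the explicit formula for the local factors in Proposition \ref{keypropositionfourier}, and the uniform pointwise bound $\lambda_{[c],p}(p^k) \ll p^{k/4}$ from Corollary \ref{corollaryfourierbound}.

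Let $S_c = \{ p : p^2 \mid q,\ c_p^2 = q_p \}$ and $P = \prod_{p \in S_c} p$. A case-by-case inspection of Proposition \ref{keypropositionfourier} shows that $\lambda_{[c],p}(p^k) = |\lambda_f(p^k)|$ for every prime $p \notin S_c$: both sides equal $|\lambda_f(p^k)|$ when $p \nmid q$, both equal $p^{-k/2}$ when $p \parallel q$, and both vanish for $k \geq 1$ when $p^2 \mid q$ and $p \notin S_c$. Combining this observation with Corollary \ref{corollaryfourierbound} and writing $n = e u$ with $e = n_P$ and $(u, P) = 1$, one obtains the pointwise majorization
\[
|\lambda_{[c]}(n)| \;\ll\; C_0^{|S_c|} \, e^{1/4} \, |\lambda_f(u)|,
\]
with $C_0$ absolute. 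Since $|S_c| \leq \omega(q_\diamond) \ll \log q_\diamond / \log\log q_\diamond$, the prefactor $C_0^{|S_c|}$ is $\ll_\eps q_\diamond^\eps$.

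Applying this to both $n$ and $m = n + l$ and splitting off the $P$-parts $e, e'$, the original sum is majorized by
\[
q_\diamond^\eps \sum_{\substack{e, e' \mid P^\infty \\ e, e' \leq x}} (ee')^{1/4}\, T_{e,e'}(l), \qquad T_{e,e'}(l) := \sum_{\substack{eu,\, e'v \leq x \\ (uv, P) = 1 \\ e'v - eu = l}} |\lambda_f(u) \lambda_f(v)|.
\]
For each pair $(e, e')$, setting $g = \gcd(e,e')$, $e = g\alpha$, $e' = g\beta$ with $(\alpha, \beta) = 1$, the equation $e'v - eu = l$ forces $g \mid l$ and reduces to $\beta v - \alpha u = l/g$. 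After parametrizing the integer solutions and stripping the coprimality condition $(uv, P) = 1$ by M\"obius inversion (at cost $\tau(P) \ll_\eps q_\diamond^\eps$), $T_{e,e'}(l)$ is bounded by \cite[Thm 3.10]{PDN-HQUE-LEVEL} applied to a genuine shifted convolution sum for $\lambda_f$ of length $\asymp x/\max(e, e')$, giving the expected $(\log x)^{-2+\eps} \log\log(e^e q)^{O(1)} \prod_{p \leq x}(1 + 2|\lambda_f(p)|/p)$ times $x/\max(e,e')$.

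It then remains to execute the outer sum $\Sigma := \sum_{e, e' \mid P^\infty,\ e, e' \leq x} (ee')^{1/4}/\max(e, e')$. By symmetry and a Rankin-type inequality $\sum_{e \mid P^\infty,\ e \leq E} e^{1/4} \leq E^{1/2} \sum_{e \mid P^\infty} e^{-1/4}$, this reduces to bounding the Euler product $\prod_{p \in S_c}(1 - p^{-1/4})^{-1}$, which by Mertens-type estimates is $\ll_\eps q_\diamond^\eps$: one has $\sum_{p \in S_c} p^{-1/4} \ll (\log q_\diamond)^{3/4}/\log\log q_\diamond$ (the worst case being when $S_c$ consists of the smallest primes, estimated via the prime number theorem), so $\prod_{p \in S_c}(1 - p^{-1/4})^{-1}$ is $q_\diamond^{o(1)}$. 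Combining the inner and outer bounds yields the claim.

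The main obstacle is the extraction of uniformity in $c$ with only $q_\diamond^\eps$ loss, in the face of the weaker-than-Deligne bound $p^{k/4}$ at primes in $S_c$. The key balancing device is the pairing between the loss $(ee')^{1/4}$ coming from the local bounds at $S_c$ and the saving $1/\max(e, e')$ coming from the shortened range of the inner shifted sum; absent this pairing, a naive divisor count over $P^\infty$ would introduce a prohibitive $(\log x)^{|S_c|}$ factor that cannot be absorbed into $q_\diamond^\eps$ in the worst case (e.g.\ when $q_\diamond$ is a primorial). A secondary technical point is the removal of the coprimality condition $(uv, P) = 1$ inside $T_{e,e'}$ via M\"obius inversion without reintroducing divisor losses exceeding $q_\diamond^\eps$.
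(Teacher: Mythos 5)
Your reduction is genuinely different from the paper's argument: you majorize $|\lambda_{[c]}(n)|$ pointwise by $q_\diamond^{\eps}\, n_P^{1/4}\,|\lambda_f(n/n_P)|$ --- and this part is correct; the identification $\lambda_{[c],p}(p^k)=|\lambda_f(p^k)|$ for $p\notin S_c$, the $p^{k/4}$ bound on $S_c$, and the estimate $\prod_{p\in S_c}(1-p^{-1/4})^{-1}\ll_\eps q_\diamond^\eps$ all check out --- and you then try to quote \cite[Thm 3.10]{PDN-HQUE-LEVEL} as a black box. The gap is in that quotation. After writing $n=eu$, $m=e'v$ and parametrizing the solutions of $\beta v-\alpha u=l/g$, the inner sum becomes $\sum_t |\lambda_f(u_0+\beta t)\,\lambda_f(v_0+\alpha t)|$, a shifted convolution along two \emph{distinct linear forms} whose coprime slopes $\alpha,\beta\mid P^\infty$ can be as large as a power of $x$. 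The cited theorem bounds only the unit-slope sums $\sum_{n\le x}|\lambda_f(n)\lambda_f(n+h)|$ and does not apply here; you would have to prove a version of the Holowinsky sieve bound for general linear forms, uniform in the slopes (the $\varphi(r)/r$ loss in the sieve now involves $\alpha\beta$ as well as the shift). Supplying that is essentially the whole content of the proposition, which is why the paper does not argue by reduction: it re-runs the sieve directly on the weights $\lambda_{[c]}$, partitioning $n$ by the $\mathcal{P}'$-parts of $n$ and $n+l$, applying the large sieve to count $\mathbb{N}_{abd}\cap\mathcal{R}$, and absorbing the local weights through the Euler-factor bounds coming from Proposition \ref{keypropositionfourier} and Corollary \ref{corollaryfourierbound}.

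A secondary unaddressed point: when $\max(e,e')$ is close to $x$, the inner sum has length $X=x/\max(e,e')=x^{o(1)}$, and the factor $\log(eX)^{-(2-\eps)}$ in any Holowinsky-type bound is then useless for producing the required $\log(ex)^{-(2-\eps)}$; you need to split off the range $\max(e,e')>x^{1/2}$, say, and beat it trivially using the power saving hidden in $(ee')^{1/4}/\max(e,e')$. This is routine but must be said. The paper's proof sidesteps both difficulties by first truncating the $\mathcal{P}'$-smooth parts at $y=x^{\alpha}$ via Cauchy--Schwarz, so that the sieve is only ever applied to sets defined by conditions on $n$ and $n+l$ themselves.
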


Inserting this bound into
\eqref{eq:reduce-to-shifted-sums},
summing dyadically (or by parts)
as in \cite[Proof of Cor 3.14]{PDN-HQUE-LEVEL},
applying the Rankin--Selberg bound
for $\lambda_\phi(l)$
as in \cite[Lem 3.17]{PDN-HQUE-LEVEL},
invoking the Rankin--Selberg formula
\[
\mu_f(1) \asymp
qk  \frac{\Gamma(k-1)}{(4 \pi)^{k-1}}
L(\ad f,1)
\]
for $\mu_f(1)$,
and pulling it all together as in \cite[Section
3.3]{PDN-HQUE-LEVEL},
we obtain
\begin{equation}\label{eq:34}
  D_f(\phi)
  \ll_{\phi,\eps}
  Y^{-1/2}
  +
  \frac{
    Y^{1/2+\eps}
    \log(q k)^\eps  q_\diamond^\eps
  }{
    q
  }
  \sum_{c|q}
  \frac{[q/c^2, 1] \ \varphi((c, q/c))}{\log([q/c^2, 1] k Y)^{2-\eps}}
  \prod_{p \leq [q/c^2, 1] k Y}
  \left( 1 + \frac{2 \left\lvert \lambda _f (p) \right\rvert}{ p} \right).
\end{equation}
To control the sum over $c$ in \eqref{eq:34},
we apply the following lemma,
whose (technical) proof we defer to Section \ref{sec:technical-lemmas}:
\begin{lemma}
  \label{lem:silly-sums-ext}
  Let $x \geq 2$, $\eps \in (0,1)$, and $q \in \mathbb{N}$.
  Then
  \[
  \sum_{c | q}
  \frac{[q/c^2, 1] \ \varphi((c, q/c)) }{\log([q/c^2, 1] x)^{2-\eps} } \ll
  \frac{q \log \log(e^e q)^{O(1)}}{\log(q x)^{2-\eps} }.
  \]
  with absolute implied constants.
\end{lemma}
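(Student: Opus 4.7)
The plan is to split the sum based on the size of $(c^2,q)$, using this to control the weight $\log([q/c^2,1]\,x)$.  Two simple identities drive the argument: a prime-by-prime check shows $(c^2,q) = c \cdot (c,q/c)$ for every divisor $c \mid q$, yielding $[q/c^2,1]\cdot (c,q/c) = q/c$ and hence the key inequality
\[
[q/c^2,1]\,\varphi((c,q/c)) \le q/c \qquad (c \mid q),
\]
while summing the widths of all cusps of $\Gamma_0(q)$ gives
\[
\sum_{c \mid q} [q/c^2,1]\,\varphi((c,q/c))
= [\Gamma_0(1):\Gamma_0(q)]
= q \prod_{p \mid q}\!\bigl(1 + \tfrac{1}{p}\bigr)
\ll q \log\log(e^e q)
\]
by Mertens' theorem.

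First I would split at the threshold $(c^2,q) \le \sqrt{qx}$ versus $(c^2,q) > \sqrt{qx}$.  For the small-gcd range, the identity
\[
\log([q/c^2,1]\,x) = \log(qx) - \log((c^2,q)) \ge \tfrac12 \log(qx)
\]
reduces this contribution to an absolute multiple of $\log(qx)^{-(2-\eps)}$ times the full cusp sum above, yielding the desired $\ll q \log\log(e^e q)/\log(qx)^{2-\eps}$.

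In the complementary large-gcd range, $(c^2,q) > \sqrt{qx}$ forces $c > (qx)^{1/4}$.  Invoking the key inequality and the trivial $\log([q/c^2,1]\,x) \ge \log 2$, this part is at most
\[
\frac{1}{\log(x)^{2-\eps}}
\sum_{\substack{c \mid q \\ c > (qx)^{1/4}}} \frac{q}{c}
= \frac{1}{\log(x)^{2-\eps}}\sum_{\substack{d \mid q \\ d < q^{3/4}x^{-1/4}}} d
\le \frac{q^{3/4}\,x^{-1/4}\,\tau(q)}{\log(x)^{2-\eps}}.
\]

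The main obstacle will be verifying that this second estimate fits within the asserted bound, i.e.\ that
\[
\tau(q)\,\log(qx)^{2-\eps}
\ll (qx)^{1/4}\,\log(x)^{2-\eps}\, \log\log(e^e q)^{O(1)}
\]
uniformly in $q \ge 1$ and $x \ge 2$.  This should follow from the subpolynomial divisor bound $\tau(q) \ll_\delta q^{\delta}$ (applied with any fixed $\delta < 1/4$) combined with the elementary estimate $\log(qx)/\log(x) \le 1 + \log(q)/\log(2) \ll \log(e^e q)$; any residual polylogarithmic factors are absorbed into $\log\log(e^e q)^{O(1)}$, while bounded values of $q$ or $x$ cause no trouble as both sides are then $O(1)$.
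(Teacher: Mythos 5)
Your proof is correct, and it takes a genuinely different and more elementary route than the paper's. The paper first substitutes $d = q/c$ and bounds $\varphi((c,q/c))[q/c^2,1] \leq d$, reducing to $\sum_{d \mid q} d/\log([d^2/q,1]x)^{2-\eps}$; it then runs a rather technical decomposition of the divisors $d$ according to which prime valuations exceed half those of $q$, controls the ratio of logarithms by a product over primes, and finally uses a log-convexity argument to reduce to the squarefree case, which is imported as \eqref{eq:silly-sum-original} from \cite[Lem 3.5]{PDN-HQUE-LEVEL}. Your argument instead exploits the exact identity $(c^2,q) = c\,(c,q/c)$, so that $\sum_{c\mid q}[q/c^2,1]\varphi((c,q/c))$ is literally the sum of cusp widths, i.e.\ the index $[\Gamma_0(1):\Gamma_0(q)] = q\prod_{p\mid q}(1+1/p) \ll q\log\log(e^e q)$; splitting at $(c^2,q) \leq \sqrt{qx}$ then handles the main range with a single application of $\log([q/c^2,1]x) \geq \tfrac12\log(qx)$, and the tail is crushed by $[q/c^2,1]\varphi((c,q/c)) \leq q/c$ together with a crude divisor bound. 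This is self-contained (no appeal to the squarefree case), shorter, and in fact delivers the sharper exponent $\log\log(e^e q)^{1}$ rather than $\log\log(e^e q)^{O(1)}$.

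One small point of phrasing at the end: the factor $\log(e^e q)^{2-\eps}$ arising from $(\log(qx)/\log x)^{2-\eps}$ is \emph{not} absorbed into $\log\log(e^e q)^{O(1)}$; it is absorbed by the spare power $q^{1/4-\delta}$ you reserved by taking $\delta < 1/4$ in the divisor bound, via $\tau(q)\log(e^e q)^{2} \ll q^{1/4}$ (the left side over the right side is bounded on $q \geq 1$, with an absolute constant once $\delta$ is fixed, say $\delta = 1/8$). With that clarification the tail estimate closes cleanly, and the implied constants throughout are absolute since $2-\eps \in (1,2)$ lets you replace every exponent $2-\eps$ by $2$ when the base exceeds $1$.
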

Applying this lemma to \eqref{eq:34} gives
\begin{equation}\label{eq:35}
  D_f(\phi)
  \ll_{\phi,\eps}
  Y^{-1/2}
  +
  \frac{
    Y^{1/2+\eps}  q_\diamond^\eps
  }{
    \log(q k)^{2-\eps}
  }
  \prod_{p \leq q k Y}
  \left( 1 + \frac{2 \left\lvert \lambda _f (p) \right\rvert}{ p} \right).
\end{equation}
The partial product over $q k < p \leq q k Y$ contributes
negligibly,
so choosing $Y$ suitably as in~\cite{MR2680498} yields
\begin{equation}\label{eq:36}
  D_f(\phi)
  \ll_{\phi,\eps}
  \log(q k)^\eps  q_\diamond^\eps M_f(q k)^{1/2},
\end{equation}
where
\[
M_f(x)
= \frac{\prod_{p \leq x} (1 + 2 |\lambda_f(p)|/p)}{\log(e x)^2
  L(\ad f,1)}.
\]
Feeding \eqref{eq:yay-bound1} and \eqref{eq:36}
into the recipe of \cite[Section 5]{PDN-HQUE-LEVEL}
gives the following result.

\begin{theorem}
  \label{thm:main2}
  Fix a Maass cusp form or incomplete Eisenstein series
  $\phi$
  on $Y_0(1)$.
  Then for a holomorphic newform $f$ of weight $k \in
  2\mathbb{N}$
  on $\Gamma_0(q)$, $q \in \mathbb{N}$, we have
  \[
  D_f(\phi)
  \ll_{\phi,\eps} \log(q k)^\eps
  \min
  \left\{
    \frac{
      (
      q / \sqrt{C}
      )
      ^{- 1 + 2 \theta + \eps }
    }{
      \log(k C)^\delta
      L(\ad f, 1)
    },
    q_\diamond^\eps \log (q k ) ^{1/12}
    L (\ad f, 1) ^{1/4}
  \right\}.
  \]
  Here $\eps > 0$ is arbitrary,
  $\ad f$ is the adjoint lift of $f$,
  $C$ is the (finite) conductor of $\ad f$,
  $\theta \in [0,7/64]$ is a bound towards the Ramanujan
  conjecture for $\phi$ at primes dividing
  $q$ (take $\theta = 0$ if $\phi$ is incomplete Eisenstein),
  and $\delta = 1/2$ or $1$ according as
  $\phi$ is cuspidal or incomplete Eisenstein.
\end{theorem}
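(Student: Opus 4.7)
The plan is to assemble Theorem~\ref{thm:main2} by combining the two bounds already at our disposal, namely the Ichino/weak-subconvex bound of Proposition~\ref{prop:bound-a-la-sound} (equation~\eqref{eq:yay-bound1}) and the Holowinsky-type bound~\eqref{eq:36}. Each is polished into the form of one term in the displayed minimum, after which taking the minimum is immediate.

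For the first term I take the square root of~\eqref{eq:yay-bound1} and absorb $30^{\omega(q/\sqrt{C})/2}\tau(q/\sqrt{C})^{1/2}$ into an additional factor of $\log(qk)^\eps$, using the elementary bound $30^{\omega(n)}\tau(n) \ll_\eps n^\eps$ valid for $n\le q$. In the Maass-cuspidal case Soundararajan's weak subconvex theorem saves $\log(Ck)^{1-\eps}$ in $|D_f(\phi)|^2$, which after square-root yields $\delta = 1/2$. For incomplete Eisenstein $\phi$ I first Mellin-invert $\phi = \tfrac{1}{2\pi i}\int \Psi^\wedge(s)E_s\,ds$ and apply the analogue of~\eqref{eq:yay-bound1} for unitary Eisenstein series, whose triple product identity is recorded in Remark~\ref{rmk:triple-product-formula-also-for-eis}; the relevant central value $|L(f\times f,\tfrac12+it)|^2$ admits a full logarithmic saving $\log(Ck)^{2-\eps}$, so $\delta=1$, and $\theta$ may be taken to be $0$ since the spectral parameters of $E_s$ are purely imaginary.

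For the second term I begin from~\eqref{eq:36},
$$D_f(\phi) \ll_{\phi,\eps} \log(qk)^\eps q_\diamond^\eps M_f(qk)^{1/2}, \qquad M_f(x) = \frac{\prod_{p\le x}(1+2|\lambda_f(p)|/p)}{\log(ex)^2\, L(\ad f, 1)}.$$
The required upgrade is the Holowinsky--Soundararajan mean-value estimate
$$\prod_{p\le x}\Bigl(1+\tfrac{2|\lambda_f(p)|}{p}\Bigr) \ll L(\ad f, 1)^{3/2}\log(x)^{13/6+\eps},$$
equivalent to $M_f(x)^{1/2} \ll L(\ad f, 1)^{1/4}\log(x)^{1/12+\eps}$. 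Its proof, imported unchanged from~\cite{holowinsky-soundararajan-2008,MR2680498}, takes logarithms of the product, bounds the resulting prime sum via the identity $L(f\times f, s) = \zeta(s)L(\ad f, s)$, applies the Selberg--Delange mean value theorem, exploits the refined elementary inequality $|\lambda_f(p)| \le \tfrac12(\lambda_f(p)^2+1) - \text{(nonnegative defect)}$, and invokes an effective Hoffstein--Lockhart-type lower bound of the form $L(\ad f, 1) \gg (\log qk)^{-c}$ to convert the exponential of $\log L(\ad f, 1)$ cleanly into a positive power of $L(\ad f,1)$.

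The principal obstacle, which I delegate to the cited literature, is obtaining the sharp exponent $1/12$ (equivalently $13/6$ in the product estimate above): extracting \emph{some} logarithmic saving is comparatively easy, but the precise constant requires the delicate short-Euler-product analysis of Holowinsky and Soundararajan. With both bounds in hand, Theorem~\ref{thm:main2} is their minimum. Theorem~\ref{thm:main-refined} then follows by a case split: since Proposition~\ref{propnN} implies $q/\sqrt{C} \ge (q/q_0)^{1/2}$, whenever $q/q_0 \ge \log(qk)^A$ for a sufficiently large fixed $A$ the first bound yields power savings in $q/q_0$, while in the complementary range $q/q_0 \ll \log(qk)^A$ the second bound, combined with the two-sided bounds on $L(\ad f, 1)$, furnishes a fixed negative power of $\log(qk)$.
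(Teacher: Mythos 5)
Your proposal is correct and is essentially the paper's own proof, which consists precisely of feeding \eqref{eq:yay-bound1} and \eqref{eq:36} into the Holowinsky--Soundararajan combination recipe of \cite[Section 5]{PDN-HQUE-LEVEL} (square root of the Watson/weak-subconvexity bound for the first term of the minimum, the mean-value estimate $M_f(x)^{1/2} \ll L(\ad f,1)^{1/4}\log(x)^{1/12+\eps}$ for the second, with the Eisenstein case handled by Mellin inversion exactly as you describe). One small bookkeeping correction: the factor $30^{\omega(q/\sqrt{C})}\tau(q/\sqrt{C}) \ll_\eps (q/\sqrt{C})^{\eps}$ cannot be absorbed into $\log(qk)^{\eps}$, but it is absorbed into the $+\eps$ in the exponent $-1+2\theta+\eps$ of $q/\sqrt{C}$ in the stated bound, so nothing is lost.
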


When $q$ is squarefree,
one has
$q / \sqrt{C} = 1$,
and Theorem \ref{thm:main2} recovers
a statement appearing on the final page of
\cite{PDN-HQUE-LEVEL} from which the main result of that paper,
the squarefree case of Theorem \ref{thm:main},
is deduced in a straightforward manner.
In general, Proposition \ref{propnN}
implies that $C$ is a square integer
satisfying $C \leq q q_0$,
where $q_0$ is the largest squarefree divisor of $q$.
From this one deduces Theorem \ref{thm:main-refined} by considering
separately the cases that $L(\ad f, 1)$ is large and small,
as in \cite[Section 3]{holowinsky-soundararajan-2008}.

\subsection{Proof that Theorem \ref{thm:main-refined} implies  Theorem~\ref{thm:main}}
\label{sec:proof-thm-1}
We explain briefly how Theorem~\ref{thm:main} follows from
Theorem~\ref{thm:main-refined}.
It's known that the class $C_c(Y_0(1))$ of \emph{compactly supported}
continuous functions on $Y_0(1)$
is contained in the uniform span of the Maass eigencuspforms
and incomplete Eisenstein series
(see~\cite{MR1942691}).
Fix a bounded continuous function $\phi$ on $Y_0(1)$.
Let $\eps > 0$ be arbitrary.
  Choose $T = T(\eps)$ large enough
  that
  the ball
  $B_T := \{z \in \mathbb{H} : -1/2 \le \Re(z) \le 1/2, \
  \Im(z)>T\}$
  has normalized volume $\mu(B_T)/\mu(1) < \eps$.
  Write $\phi = \phi_1 + \phi_2$,
  where $\phi_1 \in C_c(Y_0(1))$
  and $\phi_2$ is supported on $B_T$.
  Because $\phi_1$ can be uniformly approximated by Maass eigencuspforms and incomplete Eisenstein series,
  and because the the collection of maps $D_f(\cdot)$
  is equicontinuous for the uniform topology,
  Theorem~\ref{thm:main-refined} implies that $|D_f(\phi_1)| <
  \eps$ eventually.\footnote{Here and in what follows,
    ``eventually'' means ``provided that $q k$ large enough''.}
  Choose a smooth $[0,1]$-valued
  function $h$ supported on the complement of $B_T$
  in $Y_0(1)$
  that satisfies $\mu(h)/\mu(1) > 1 - 2 \eps$.
  Theorem~\ref{thm:main-refined} implies that
  the positive real number $\mu_f(h)/\mu_f(1)$
  eventually exceeds
  $1 - 3 \eps$.
  By the nonnegativity of $\mu_f$,
  we deduce that $\mu_f(B_T)/\mu_f(1) < 3 \eps$ eventually.
  Let $R$ be the supremum of $|\phi|$.
  Then $|\mu_f(\phi_2)/\mu_f(1)| \le R \mu_f(B_T)/\mu_f(1) \leq
  3 R \eps$
  eventually and $|\mu(\phi_2)/\mu(1)| \le R \eps$, so that
  $|D_f(\phi_2)| \leq
  4 R \eps$ eventually.
  Thus $|D_f(\phi)| < (1 + 4 R) \eps$ eventually.
  Letting $\eps \rightarrow 0$, we obtain
  Theorem \ref{thm:main}.
\subsection{Technical arguments}
\label{sec:technical-lemmas}

\begin{proof}[Proof of Proposition \ref{prop:bounds-shifted-sums-fourier-coefs-generic-cusps}]
  The proof extends that of \cite[Theorem 3.10]{PDN-HQUE-LEVEL},
  which in turn refines \cite[Theorem 2]{MR2680498}.

  We may assume $1 \leq l \leq x$.
  Fix $\alpha \in (0,1/2)$ and set
  $y = x^\alpha$, $s = \alpha \log \log(x)$,
  $z = x^{1/s}$.
  If $x \gg_\alpha 1$ then $10 \leq z \leq y \leq x$,
  as we henceforth assume.
  Define finite sets of primes
  \[
  \mathcal{P} = \{p \leq z, p \nmid q\},
  \quad
  \mathcal{P} ' = \{p \leq z\} \cup \{p \mid q\}.
  \]
  For each set $S$ of primes,
  define the \emph{$S$-part} of a positive integer
  $n$, denoted $n_S$, to be its greatest positive divisor composed
  entirely of primes in $S$.
  We henceforth use the symbol $m$ to denote
  $n + l$.
  By the Cauchy--Schwarz inequality,
  we may bound the contribution to
  the main sum coming from
  those terms for which the $\mathcal{P}'$-part of $m$ or of $n$
  is $>y$
  by
  \[
  \sum_{
    \substack{
      \max(m,n) \leq x \\
      \max(m_{\mathcal{P} '}, n_{\mathcal{P} '}) > y
    }
  }
   \lambda_{[c]}(m) \lambda_{[c]}(n) \leq
  2
  x
  \left( \sum _{m \leq x }
    \frac{\left\lvert \lambda_{[c]} (m) \right\rvert ^2 }{m}
  \right) ^{1/2}
  \left( \sum _{\substack{n \leq x \\ n_{\mathcal{P} '} > y }}
    \frac{\left\lvert \lambda_{[c]} (n) \right\rvert ^2 }{n}
  \right) ^{1/2}.
  \]
 By Proposition~\ref{keypropositionfourier} and Corollary~\ref{corollaryfourierbound}, we have
  \[
  \left( \sum _{m \leq x }
    \frac{\left\lvert \lambda_{[c]} (m) \right\rvert ^2 }{m}
  \right) \le \left(\prod_{p|q}\sum_{k=0}^\infty
  \frac{ \lambda_{[c],p} (p^k)   ^2 }{ p^k} \right)
  \sum_{m \leq x}
  \frac{\left\lvert \lambda (m ) \right\rvert ^2 }{m}
    \ll q_\diamond^\eps \log(x)^3,
  \]
 and
  \begin{equation}\label{eq:28}
   \left( \sum _{\substack{n \leq x \\ n_{\mathcal{P} '} > y }}
    \frac{\left\lvert \lambda_{[c]} (n) \right\rvert ^2 }{n}
  \right) \le \left(\prod_{p|q}\sum_{k=0}^\infty
  \frac{ \lambda_{[c],p} (p^k)   ^2 }{ p^k} \right)
    \sup_{d | q^{\infty}}
    \sum_{
      \substack{
        n \leq x/d \\
        n_{\mathcal{P}} > y/d
      }
    }
    \frac{\left\lvert \lambda (n ) \right\rvert ^2 }{n}
    \ll
     q_\diamond^\eps\sup_{d | q^{\infty}}
    \sum_{
      \substack{
        n \leq x/d \\
        n_{\mathcal{P}} > y/d
      }
    }
    \frac{\left\lvert \lambda (n ) \right\rvert ^2 }{n}.
  \end{equation}
  To bound the RHS of \eqref{eq:28}, we
  consider separately the ranges
  $d > y ^{1/2}$ and $d \leq  y ^{1/2}$.
  If $d > y^{1/2}$, then
  $
  \sum_{
    \substack{
      n \leq x/d \\
      n_{\mathcal{P}} > y/d
    }
  }
  \frac{\left\lvert \lambda (n ) \right\rvert ^2 }{n}
  \ll x^{1-\alpha/4}
  $
  thanks to, say,
  the Deligne bound $|\lambda(n)| \leq \tau(n)$.
  If $d \leq y^{1/2}$, we apply Cauchy--Schwarz,
  the Deligne bound, and the estimate
  $
  \sum_{
    \substack{
      n \leq x \\
      n_{\mathcal{P}} > y^{1/2}
    }
  }
  1
  \ll_{A,\alpha} \frac{x}{ \log(x)^A}
  \quad \text{ for every } A > 0
  $
  which follows from a theorem of Krause \cite{MR1078363}
  (see the discussion in \cite[Proof of Lem 6.3]{PDN-HMQUE})
  to deduce that
  $
  \sum_{
    \substack{
      n \leq x/d \\
      n_{\mathcal{P}} > y/d
    }
  }
  \frac{\left\lvert \lambda (n ) \right\rvert ^2 }{n}
  \ll
  \frac
  {
    x
  }
  {
    \log(x)^{A}
  }
  $ (for a different value of $A$).
  Combining these estimates, we obtain
  \[
  \sum_{\max(m,n) \leq x}
   \lambda_{[c]}(m) \lambda_{[c]}(n)
  \leq
  \sum_{
    \substack{
      \max(m,n) \leq x \\
      \max(m_{\mathcal{P} '}, n_{\mathcal{P} '}) \leq y
    }
  }
   \lambda_{[c]}(m) \lambda_{[c]}(n)
  +  O \left(  \frac{q_\diamond^\eps x}{\log(x)^A} \right).
  \]
  To treat the remaining sum,
  we follow \cite{MR2680498}
  in partitioning it according
  to the values $m_{\mathcal{P} '}$ and $n_{\mathcal{P} '}$.
  Specifically, for $a,b,d \in \mathbb{N}$
  with $(a,b) = 1$ and $d | l$,
  let $\mathbb{N}_{a b d}$
  denote the set of all $n \in \mathbb{N}$
  for which $ad = m_{\mathcal{P} '}$
  and $bd = n_{\mathcal{P} '}$.
  Then $\mathbb{N} = \bigsqcup \mathbb{N}_{a b d}$.
  For $n \in \mathbb{N}_{a b d}$,
  we have
  $\lambda_{[c]}(m) \lambda_{[c]}(n)
  = \left(\prod_{p \in \mathcal{P}'}\lambda_{[c],p}(a d)\right) \left(\prod_{p \in \mathcal{P}'} \lambda_{[c],p}(b d)\right)
  \lambda(m/ad) \lambda(n/bd)$
  because each prime divisor of $q$ is contained
  in $\mathcal{P}'$.
  Recall the notation $\Omega(n) = \sum_{p^\alpha || n} \alpha$
  for the number of prime factors of $n$
  counted with multiplicity.
  Since $|\lambda(n)| \leq \tau(n)$ for all $n \in \mathbb{N}$,
  \[
  \tau \left( \frac{m}{ad} \right)
  = \prod_{p^\alpha || \frac{m}{ad}}
  (\alpha + 1)
  \leq 2^{\Omega(m/ad)},
  \quad \text{ and }
  \Omega(m/ad)
  \leq \frac{\log \left( \frac{m}{a d} \right)}{ \log (z) }
  \leq s,
  \]
  we have $|\lambda(m/ad)| \leq 2^s$.
  Similarly, $|\lambda(n/bd)| \leq 2^s$.  Thus
  \begin{equation}\label{eq:26}
    \sum_{
      \substack{
        \max(m,n) \leq x \\
        \max(m_{\mathcal{P} '}, n_{\mathcal{P} '}) \leq y
      }
    }
    |\lambda_{[c]}(m) \lambda_{[c]}(n)|
    \leq 4^s
    \sum_{d \mid \ell}
      \mathop{\sum_{a \in \mathbb{N}} \sum_{b \in \mathbb{N}}}_{
      \substack{
        (a,b) = 1\\
        \max(a d, b d) \leq y \\
        p | a b d \implies p \in \mathcal{P} ' \\
      }
    }
   \prod_{p \in \mathcal{P}'}\lambda_{[c],p}(a d)\prod_{p \in
     \mathcal{P}'}\lambda_{[c],p}(b d)
    \cdot \#(\mathbb{N}_{a b d} \cap \mathcal{R})
  \end{equation}
  with $\mathcal{R} := [1,x] \cap [1,x-\ell]$.
  The factor
      $4^s$ is negligible
  if $\alpha$ is chosen sufficiently small,
  precisely
  $4^s \ll_\eps \log(x)^\eps$ for  $\alpha \ll_\eps 1$.
  Set $r = a b d ^{-1} l$.
  As in \cite{PDN-HQUE-LEVEL} and \cite{MR2680498},
  the large sieve implies
  \[
  \# (\mathbb{N} _{a b d } \cap \mathcal{R} )
  \ll
  \frac{x/abd + z ^2 }{\sum _{t \leq z} h(t) },
  \]
  where $h(t)$ is supported on squarefree integers $t$,
  multiplicative,
  and given by
  \[
  h(p) = \begin{cases}
    1 &  p \mid r \\
    2 & \text{otherwise}
  \end{cases}
  \]
  on the primes.
  Note that for all $p \leq z$,
  we have
  \[
  h(p) =
  \begin{cases}
    1 &  p \mid r \text{ and } p \leq z \\
    2 & \text{otherwise}
  \end{cases}
  =
  \begin{cases}
    1 &  p \mid r_{\mathcal{P}} \\
    2 & \text{otherwise}
  \end{cases}.
  \]
  It is standard \cite[pp55-59]{MR1836967} that
  \[
  \sum_{t \leq z} h(t)
  \gg
  \frac{\varphi (r_{\mathcal{P}}) }{r_{\mathcal{P}}}
  \log (z) ^2.
  \]
  Since $x + abd z^2 \ll x$,
  $\log (z) \gg \log (x) / \log\log (x) \gg \log (x) ^{1 - \eps }$
  and
  \[
  \frac{\varphi (r_{\mathcal{P}}) }{r_{\mathcal{P}}}
  \gg
  \log\log (x ) ^{-1}
  \log\log (e ^{e} q ) ^{-1} ,
  \]
  we obtain
  \[
  \# (\mathbb{N} _{a b d } \cap \mathcal{R} )
  \ll
  \log\log (e ^{e} q )
  \frac{1}{a b d}
  \frac{
    x
  }
  {
    \log (x)^{2 - \eps }
  }.
  \]
  To complete the proof of the proposition, it now suffices
  to show that
  \begin{equation}\label{eq: lfour}
  \sum_{d \mid \ell}
  \mathop{\sum_{a \in \mathbb{N}} \sum_{b \in \mathbb{N}}}_{
    \substack{
      (a,b) = 1\\
      \max(a d, b d) \leq y \\
      p | a b d \implies p \in \mathcal{P} ' \\
    }
  }
  \frac{
   \prod_{p \in \mathcal{P}'}\lambda_{[c],p}(a d)\prod_{p \in \mathcal{P}'}\lambda_{[c],p}(b d)
  }
  {
    a b d
  }
  \ll
 q_\diamond^\eps
  \log (x) ^{\eps }
  \prod_{p
    \leq x}
  \left(
    1 + \frac{2 |\lambda_f(p)|}{p}
  \right).
  \end{equation}

 Note first that

 $$\mathop{\sum_{a \in \mathbb{N}} \sum_{b \in \mathbb{N}}}_{
    \substack{
      (a,b) = 1\\
      \max(a d, b d) \leq y \\
      p | a b d \implies p \in \mathcal{P} ' \\
    }
  }
  \frac{
   \prod_{p \in \mathcal{P}'}\lambda_{[c],p}(a d)\prod_{p \in \mathcal{P}'}\lambda_{[c],p}(b d)
  }
  {
    a b
  } \le \left(\prod_{\substack{p \le z \\ p\nmid q}} \sum_{k \ge 0} \frac{\lambda(p^{k + v_p(d)})}{p^k} \right)^2 \left(\prod_{\substack{p| q}} \sum_{k \ge 0} \frac{\lambda_{[c],p}(p^{k + v_p(d)})}{p^k} \right)^2
  $$

If $p \nmid q$, then the arguments of~\cite[Proof of Thm. 3.10]{PDN-HQUE-LEVEL} show that
$\sum_{k \ge 0} \frac{\lambda(p^{k + v})}{p^k} \le 3v +3$ if $v \ge 1$ and
$\sum_{k \ge 0} \frac{\lambda(p^{k })}{p^k} \le \left(1+ \frac{\lambda(p)}{p} \right)\left(1+ \frac{20}{p^2}\right).$
If $p|q$ but $c_p^2 \neq q_p$, we have uniformly
$\sum_{k \ge 0} \frac{\lambda_{[c],p}(p^{k + v})}{p^k} \le \frac1{1- p^{-3/2}}. $
Finally, if $p^2 | q$ and $c_p^2 = q_p$, then Corollary~\ref{corollaryfourierbound} shows that $\sum_{k \ge 0} \frac{\lambda_{[c],p}(p^{k + v})}{p^k} \ll p^{\frac{v}{4}}$
 where the implied constant is absolute.
 Putting all this together, and arguing exactly as in~\cite[Proof of Thm. 3.10]{PDN-HQUE-LEVEL}, we see that the LHS of~\eqref{eq: lfour} is bounded by an absolute constant multiple of $$\log(x)^{\eps}\prod_{p
    \leq x}
  \left(
    1 + \frac{2 |\lambda_f(p)|}{p}
  \right) \prod_{p | q_\diamond}O(1)$$

  Since $ \prod_{p | q_\diamond}O(1) \ll q_\diamond^\eps$ this completes the proof.

\end{proof}

\begin{proof}[Proof of Lemma \ref{lem:silly-sums-ext}]
  This lemma generalizes the bound
  \begin{equation}\label{eq:silly-sum-original}
    \sum_{d \mid q} \frac{d}{\log(d x)^{2- \eps}}
    \ll \frac{q \log \log(e^e q)}{\log (q x)^{2-\eps}}
  \end{equation}
  proved in \cite[Lem 3.5]{PDN-HQUE-LEVEL},
  which holds for all squarefree $q$ and all $x \geq 2$, $\eps \in
  (0,1)$,
  with an absolute implied constant.
  The proof of \eqref{eq:silly-sum-original} applies a convexity argument to reduce to the
  case that $q$ is the product of the first $r$ primes,
  partitions the sum according to the number of divisors of $d$,
  and then invokes a weak form of the prime number theorem.  Our strategy here is to reduce the general case to that in which $q$ is squarefree,
  and then  apply the known bound \eqref{eq:silly-sum-original}.

 First, note that
  \[
 \sum_{c| q}
  \frac{[q/c^2, 1] \ \varphi((c, q/c)) }{\log([q/c^2, 1] x)^{2-\eps} }
  =
   \sum_{d\mid q}
  \varphi((d,q/d)) \frac{[d^2/q,1]}{\log([d^2/q,1]k)^{2-\eps}}.
  \]
  Since
  \[
  \varphi((d,q/d)) [d^2/q,1]
  \leq (d,q/d)  [d^2/q,1]
  = d,
  \]
  we see that it suffices to show
  \[
  \sum_{d\mid q}
  \frac{d}{\log([d^2/q,1]k)^{2-\eps}}
  \ll \frac{q \log \log(e^e q)^{O(1)}}{\log(q k)^{2-\eps}}.
  \]

  From here on, the argument is unfortunately a bit technical.
  Let $q_1 < \dotsb < q_r$ be the
  distinct
  prime factors of $q$.
  Define maps $B_i : \{d \in \mathbb{N}  : d \mid q\} \rightarrow \{0,1\}$
  by
  \[
  B_i(d) = \begin{cases}
    0 & (d,q_i^\infty) \mid q_\diamond \\
    1 & \text{otherwise.}
  \end{cases}
  \]
  Thus $B_i(d) = 1$ or $0$ according
  as the valuation of $d$ at $q_i$ does or does not exceed
  half that of $q$.
  Let $B  = \prod B_i : \{d  \in \mathbb{N} : d \mid q \} \rightarrow \{0,1\}^r$
  be the product map that sends
  $d$ to the $r$-tuple $(B_1(d), \dotsc, B_r(d))$.
  For each positive divisor $d = \prod q_i^{\alpha_i}$ of $q$ and each
  $\eta = (\eta_1,\dotsc,\eta_r) \in \{0,1\}^r$,
  write
  $d_\eta = \prod q_i^{\eta_i \alpha_i}$.
  Our reason for introducing this notation is that
  for all $d \in B^{-1}(\eta)$, we have
  $[d^2/q,1] = (d^2/q)_\eta$
  and may write
  $d = {q_\diamond}  (q/{q_\diamond} )_\eta \prod q_i^{-\delta_i}$
  where $\delta_i \geq 0$ for all $i$.
  Thus
  \begin{equation}\label{eq:21}
    \frac{d}{\log(k[d^2/q,1])^{2 - \eps} }
    = {q_\diamond}
    \frac{(q/{q_\diamond} )_\eta }{ \log(k (q/{q_\diamond} )_\eta)^{2-\eps}  }
    \frac{\log(k (q/{q_\diamond} )_\eta)^{2-\eps} }{\log(k
      (d^2/q)_\eta)^{2-\eps} }
    \prod q_i^{-\delta_i}.
  \end{equation}
  Let us now write $q = \prod q_i^{\beta_i}$
  and ${q_\diamond}  = \prod q_i^{\gamma_i}$;
  the definition of ${q_\diamond} $ implies $\gamma_i = \lfloor \beta_i/2 \rfloor$.
  Then
  \[
  \frac{\log(k (q/{q_\diamond} )_\eta)}{\log(k (d^2/q)_\eta)}
  =
  \frac{
    \log(k) +
    \sum \eta _i (\beta _i - \gamma _i ) \log (q _i )
  }{
    \log(k) +
    \sum \eta _i (\beta _i - 2 \delta_i  ) \log (q _i )
  }
  \leq \max_{i:\eta_i=1}
  \frac{\beta_i}{ \beta_i - 2 \delta_i}
  \leq \prod_{i:\eta_i=1} \frac{1}{1 - 2\delta_i/\beta_i}.
  \]
  (In the above, define an empty maximum or an empty product to be $1$.)
  By comparing the sum to an integral, one shows easily that
  \[
  \sum _{0 \leq \delta _i < \beta _i /2}
  \frac{q_i^{-\delta_i}}{ \left( 1 - 2\delta _i /\beta_i
    \right)^{2-\eps}}
  \leq 1 + \frac{9 + O(1/\log q_i)}{q_i} \leq 1 + O(1/q_i)
  \leq (1 + 1/q_i)^{O(1)}.
  \]
  with absolute implied constants.
  Since $\prod_i (1 + 1/q_i) \ll \log \log(e^e q)$, we deduce
  from \eqref{eq:21} that
  \begin{equation*}
    \sum _{d \in B^{-1}(\eta)} \frac{d }{ \log ([d^2/q,1])^{2-\eps}}
    \ll {q_\diamond}  \frac{(q/{q_\diamond} )_\eta }{\log(k (q/{q_\diamond} )_\eta )^{2-\eps}} \log\log(e^e q)^{O(1)}.
  \end{equation*}
  To complete the proof of the lemma, it
  suffices now to establish
  that
  \begin{equation}\label{eq:23}
    \sum _{\eta \in \{0, 1\}^r}
    \frac{(q/{q_\diamond} )_\eta }{ \log(k (q/{q_\diamond} )_\eta )^{2-\eps}}
    \ll
    \frac{q/{q_\diamond} }{\log(k (q/{q_\diamond} ))^{2-\eps}} \log \log(e^e q).
  \end{equation}
  As in \cite[Proof of Lem 3.5]{PDN-HQUE-LEVEL},
  define
  $\beta(x) = x / \log(e^e x k)^{2 - \eps}$.
  Then
  $\beta(x) \asymp x / \log(x k)^{2- \eps}$ for all $x \in
  \mathbb{R}_{\geq 1}$,
  so the desired bound \eqref{eq:23} is
  equivalent to
  \[
  \sum_{\eta \in \{0,1\}^r}
  \frac{\beta((q/{q_\diamond} )_\eta )}{\beta(q/{q_\diamond} )} \ll \log \log(e^e q).
  \]
  Since $\beta$ is increasing on $\mathbb{R}_{\geq 1}$
  and the map $\mathbb{R}_{\geq 0} \ni x \mapsto \log
  \beta(e^x)$ is convex,
  we have (compare with \cite[Proof of Lem 3.5]{PDN-HQUE-LEVEL})
  \begin{align*}
    \frac{
      \beta((q/{q_\diamond} )_\eta )
    }{
      \beta(q/{q_\diamond} )
    }
    &=
    \frac{
      \beta(q_1^{\eta_1(\beta_1-\gamma_1)}
      \dotsb q _r^{\eta_r(\beta_r-\gamma_r)})
    }
    {
      \beta(q_1^{\beta_1-\gamma_1} \dotsb q_r^{\beta_r-\gamma_r})
    }
    \leq
    \frac{
      \beta(
      q_1^{\eta_1}
      q_2^{\eta_2(\beta_2-\gamma_2)}
      \dotsb
      q_r^{\eta_r(\beta_r-\gamma_r)}
      )
    }
    {
      \beta(
      q_1
      q_2^{\beta_2-\gamma_2}
      \dotsb
      q_r^{\beta_r-\gamma_r}
      )
    } \\
    &\leq
    \frac{
      \beta(
      q_1^{\eta_1}
      q_2^{\eta_2}
      q_3^{\eta_3(\beta_3-\gamma_3)}
      \dotsb
      q _r^{\eta_r(\beta_r-\gamma_r)}
      )
    }{
      \beta(
      q_1  q_2  q_3^{\beta_3-\gamma_3} \dotsb
      q_r^{\beta_r-\gamma_r}
      )
    }
    \leq  \dotsb \\
    &\leq
    \frac
    {
      \beta(
      q_1^{\eta_1}
      \dotsb
      q _r ^{\eta_r }
      )
    }
    {
      \beta(q_1 \dotsb q _r)
    }
    =
    \frac{
      \beta(\prod q_i^{\eta_i})
    }
    {
      \beta(\prod q_i)
    }.
  \end{align*}
  But $\prod q_i^{\eta_i}$ is squarefree, so \eqref{eq:silly-sum-original} implies
  \[
  \sum_{\eta \in \{0,1\}^r}
  \frac{\beta(\prod q_i^{\eta_i})}{\beta(\prod q_i)}
  = \sum_{d | \prod q_i}
  \frac{\beta(d)}{\beta(\prod q_i)}
  \ll \log \log(e^e \prod q_i)
  \ll \log \log(e^e q),
  \]
  as desired.
\end{proof}

\bibliography{refs-que}

\end{document}